\newcommand{\N}{\mathbb{N}}
\newcommand{\Z}{{\mathbb Z}}
\renewcommand{\phi}{\varphi}
\newcommand{\into}{\hookrightarrow}
\theoremstyle{plain}
    \newtheorem{theorem}{Theorem}[section]
    \newtheorem{lemma}[theorem]{Lemma}
    \newtheorem{corollary}[theorem]{Corollary}
    \newtheorem{proposition}[theorem]{Proposition}
\theoremstyle{definition}
    \newtheorem{example}[theorem]{Example}
    \newtheorem{remark}[theorem]{Remark}
        \newtheorem{question}[theorem]{Question}
\theoremstyle{remark}
\DeclareMathOperator{\id}{id}
\DeclareMathOperator{\Ell}{Ell}
\title[Constructions in minimal dynamics and applications to $\mathrm{C}^*$-algebras]{Constructions in minimal amenable dynamics and applications to the classification of $\mathrm{C}^*$-algebras}
\author[R.J. Deeley, I.F. Putnam, K.R. Strung]
{Robin J. Deeley \and
Ian F. Putnam \and
Karen R. Strung}
\address{Department of Mathematics,
University of Colorado Boulder
Campus Box 395,
Boulder, CO 80309-0395, USA }
\email{robin.deeley@gmail.com}
\address{Department of Mathematics and Statistics,
University of Victoria,
Victoria, B.C., Canada V8W 3R4} 
\email{ifputnam@uvic.ca}
\address{Institute for Mathematics, Astrophysics, and Particle Physics, Radboud University, Postbus 9010, 6500 GL Nijmegen,
The Netherlands}
\email{k.strung@math.ru.nl}
\date{\today}
\subjclass[2010]{37B05, 46L35, 46L85, 19K99}
\keywords{minimal homeomorphisms, equivalence relations, classification of nuclear \mbox{$\mathrm{C}^{*}$-algebras}}
\thanks{KRS is funded by
Sonata 9 NCN grant 2015/17/D/ST1/02529 and a Radboud Excellence Initiative Postdoctoral Fellowship. IFP is supported in part by an NSERC Discovery Grant.}
\begin{document}

\begin{abstract} We study the existence of minimal dynamical systems, their orbit and minimal orbit-breaking equivalence relations, and their applications to $\mathrm{C}^*$-algebras and $K$-theory.  We show that given any finite CW-complex there exists a space with the same $K$-theory and cohomology that admits a minimal homeomorphism. The proof relies on the existence of homeomorphisms on  point-like spaces constructed by the authors in previous work, together with existence results for skew product systems due to Glasner and Weiss. To any minimal dynamical system one can associate minimal equivalence relations by breaking orbits at small subsets. Using Renault's groupoid $\mathrm{C}^*$-algebra construction we can associate $K$-theory groups to minimal dynamical systems and orbit-breaking equivalence relations. We show that given arbitrary countable abelian groups $G_0$ and $G_1$ we can find a minimal orbit-breaking relation such that the $K$-theory of the associated $\mathrm{C}^*$-algebra is exactly this pair. These results have important applications to the Elliott classification program for $\mathrm{C}^*$-algebras. In particular, we make a step towards determining the range of the Elliott invariant of the $\mathrm{C}^*$-algebras associated to minimal dynamical systems with mean dimension zero and their minimal orbit-breaking relations.\end{abstract}

\maketitle

\setcounter{section}{-1}

\section{Introduction} \label{Sect:Intro}

Central to the study of topological dynamics are those systems which are minimal.  Given a compact metric space $X$ and a homeomorphism $\varphi : X \to X$, we say that $\varphi$ is minimal if $X$ contains no proper closed $\varphi$-invariant subsets, or, equivalently, that the $\varphi$-orbit of every point $x \in X$ is dense in $X$. It is well known that for any compact metric space $X$ and homeomorphism $\varphi : X \to X$, there is a minimal set $Y \subset X$ such that $\varphi$ restricts to a homeomorphism of $Y$ and that $\varphi|_Y$ is minimal.  Determining what spaces might arise as minimal sets, however, is quite difficult. An obvious question to ask is: Given an infinite compact metric space $X$, does $X$ admit a minimal homeomorphism? 

There is no known characterization of the spaces for which this might be possible, but in general, for sufficiently well-behaved spaces, the answer is likely to be ``no" and there are often explicit obstructions. 
For example given an absolute neighbourhood retract, the Lefschetz--Hopf theorem (see for example \cite{Brown:LefFixPt}) implies that the $K$-theory of the given space often contains enough information to conclude that any continuous self-map on it has a periodic  point. We are interested in infinite spaces, so this is enough to imply that such a space cannot admit a minimal homeomorphism. An explicit example of this type of result is \cite[Theorem 2]{Ful:PerPts}.  

Thus the answer of whether a given space admits a minimal homeomorphisms is usually negative. Instead we consider the following question.

\begin{question} \label{question1}
Let $W$ be a compact metric space. Does there exists a minimal dynamical system $(X, \varphi)$ where $X$ is an infinite compact metric space that agrees with $W$ on $K$-theory and \v{C}ech cohomology?
\end{question}

As a partial resolution, we show that this question has a positive answer when $W$ is a finite CW-complex. 

Closely related to minimal dynamical systems are minimal equivalence relations. Here, we equip a compact metric space $X$ with an equivalence relation and it is called minimal if every equivalence class is dense in $X$. Every minimal dynamical system $(X, \varphi)$ gives rise to a minimal equivalence relation where the equivalence classes are just the $\varphi$-orbits.  While in ergodic theory the analogous notions turn out to be essentially the same \cite{MR662736}, in topological dynamics, minimal equivalence relations are more general than the class of orbit equivalence relations associated to minimal homeomorphisms.

In this paper, we are interested in orbit-breaking relations. Here, the input is a minimal dynamical system $(X, \varphi)$ and nonempty closed subset $Y \subset X$. If $Y$ meets every $\varphi$-orbit at most once, then we can break the equivalence class corresponding to any orbit that passes through $Y$ into two distinct equivalence classes corresponding to the forward and backward orbits from $Y$. Since $Y$ meets every $\varphi$-orbit at most once, these equivalence classes are still dense in $X$, and, taken together with the equivalence classes corresponding to orbits that do not pass through $Y$, we obtain a new minimal equivalence relation on $X$. 

The question of what $K$-theory groups arise from the $\mathrm{C}^*$-algebras associated to minimal dynamical systems---from both orbit and orbit-breaking relations---is of particular importance to the Elliott classification program for $\mathrm{C}^*$-algebras. The Elliott classification program seeks to classify, up to $^*$-isomorphism, simple, separable, unital, and nuclear $\mathrm{C}^*$-algebras by the so-called Elliott invariant, an invariant consisting of $K$-theory and tracial data. The $\mathrm{C}^*$-algebras associated to minimal dynamical systems on infinite compact metric spaces are always simple, separable, unital, and nuclear, and as such, have long provided the classification program with interesting and elegant examples.

There are many such orbit-breaking relations for which the associated $\mathrm{C}^{*}$-algebras
cannot be isomorphic to the $\mathrm{C}^*$-algebra of any minimal dynamical system. Breaking at a single point in a Cantor minimal system, for example, results in an approximately finite (AF) algebra, which has trivial $K_1$-group. The $\mathrm{C}^*$-algebra of any minimal dynamical system, on the other hand, always has nontrivial $K_1$-group.

A recent spectacular achievement of the Elliott program is the classification of all simple, separable, unital, infinite-dimensional $\mathrm{C}^*$-algebras with finite nuclear dimension that satisfy the Universal Coefficient Theorem (UCT). The nuclear dimension is a $\mathrm{C}^*$-algebraic version of topological covering dimension and when a $\mathrm{C}^*$-algebra has finite nuclear dimension, its structure is much better behaved. The UCT is a tool which, loosely speaking, allows one to transfer information between $KK$-theory and $K$-theory. While it is a crucial assumption, it holds for all known examples of nuclear $\mathrm{C}^*$-algebras, in particular for those which arise from minimal dynamical systems by \cite{Tu:Groupoids}. Let us call a $\mathrm{C}^*$-algebra \emph{classifiable} if it is simple, separable, unital, infinite-dimensional, has finite nuclear dimension, and satisfies the UCT.  Based on this recent success, it is natural to ask the following:
 
\begin{question} \label{q2}
Which classifiable $\mathrm{C}^{*}$-algebras arise as $\mathrm{C}^*$-algebras of minimal \'{e}tale equivalence relations?
\end{question}

The $\mathrm{C}^*$-algebras associated to minimal dynamical systems---from both orbit and orbit-breaking relations---are both examples of minimal \'{e}tale equivalence relations \mbox{$\mathrm{C}^*$-algebras}, also called principal \'etale groupoids. As such, although Questions \ref{question1} and \ref{q2} are not equivalent, they are closely related. Similar work done in this direction is \cite{Li:Cartan} (also see \cite{AusMitra:GrpModGelfDual, BarLi:CartanUCT1, BarLi:CartanUCT2}). The main result of Li in \cite{Li:Cartan} is that every stably finite classifiable $\mathrm{C}^*$-algebra can be realized as the $\mathrm{C}^*$-algebra of a \emph{twisted} principal \'etale groupoid.  There, rather than focusing on constructions coming directly from dynamics, Li mimics known inductive limit constructions at the level of the groupoids. Another difference is that the twist on the grouping is only nontrivial when the $K_0$-group of the corresponding $\mathrm{C}^*$-algebra is torsion free. Below, although our constructions do not realize the Elliott invariant of every classifiable $\mathrm{C}^*$-algebra, we are able to have torsion in $K$-theory without requiring any twists. In Subsection~\ref{Subsect:RR0}, we improve on the results of \cite{MR3770169} for classifiable $\mathrm{C}^*$-algebras with real rank zero by allowing for torsion. 

Constructions of Giol and Kerr show that there are examples of crossed products associated to minimal homeomorphisms of infinite compact metric spaces which have infinite nuclear dimension \cite{GioKerr:Subshifts}. Thus not all such crossed product $\mathrm{C}^*$-algebras---or their orbit-breaking subalgebras---will fall within the scope of the classification theorem. However, Elliott and Niu provided a sufficient condition for finite nuclear dimension when they showed that whenever $(X, \varphi)$ has mean dimension zero, the crossed product will be isomorphic to itself tensored with the Jiang--Su algebra, $\mathcal{Z}$ \cite{EllNiu:MeanDimZero}. The Jiang--Su algebra is a simple, separable, unital, nuclear, infinite-dimensional $\mathrm{C}^*$-algebra whose Elliott invariant is the same as the Elliott invariant of $\mathbb{C}$.   A $\mathrm{C}^*$-algebra $A$ is called $\mathcal{Z}$-stable if it absorbs the Jiang--Su algebra $\mathcal{Z}$ tensorially, $A \cong A \otimes \mathcal{Z}$. Finite nuclear dimension of a \mbox{$\mathrm{C}^*$-algebra} $A$ turns out to be equivalent to $\mathcal{Z}$-stability of $A$, see \cite{CETWW} (which was based on earlier work in \cite{MR3273581, MR3418247}). When combined with the theorem of Elliott and Niu, results of Archey, Buck and Phillips imply that mean dimension zero ensures that any orbit-breaking algebra is also $\mathcal{Z}$-stable and hence has finite nuclear dimension \cite{ArBkPh-Z}.

It follows from this that a classification theorem for the $\mathrm{C}^*$-algebras associated to minimal dynamical systems is close at hand. One fundamental remaining piece, is determining the range of the Elliott invariant for such $\mathrm{C}^*$-algebras. In particular, it is directly related to answering Question~\ref{q2}, above.  A key step forward to understanding the range was the authors' construction of minimal homeomorphisms on ``point-like" spaces---spaces whose $K$-theory and cohomology are the same as a point \cite{DPS:DynZ}.  We can furthermore arrange that such a system is uniquely ergodic. After breaking the orbit at a single point, we are left with a minimal equivalence relation whose $\mathrm{C}^*$-algebra is $^*$-isomorphic to the Jiang--Su algebra. This system plays an important role in this paper. Using a skew-product construction of Glasner and Weiss we are able to use this system to produce many new minimal systems of mean dimension zero on spaces with the $K$-theory and \v{C}ech cohomology of any finite CW complex. 

The paper is structured as follows.  Section~\ref{Sect:Prelim} contains the background required for the paper.  For the most part this is $\mathrm{C}^*$-algebraic in nature and the reader only interested in the dynamical aspects of the paper can skip to Sections~\ref{Sect:MinSys} and \ref{Sect:MD}, which are the purely dynamical sections of the paper. These sections contain the construction of minimal dynamical systems and the mean dimension of these systems respectively. In particular, the details of the positive answer to Question \ref{question1} for finite CW-complexes can be found in Section 3.2. The second, mostly $\mathrm{C}^*$-algebraic part of the paper consists of Sections~\ref{Sect:K} through \ref{Sect:Class}. In Section~\ref{Sect:K}, we move to $\mathrm{C}^*$-algebras associated to minimal dynamical systems and record three lemmas about their $K$-theory which are fundamental for the results in the subsequent sections. The $\mathrm{C}^*$-algebras associated to the minimal skew product dynamical systems constructed in Subsection~\ref{minSkewProSec} are studied in Section~\ref{Sect:Skew}. Finally, Section~\ref{Sect:Class} contain applications to the Elliott classification program; the first subsection looks at $\mathrm{C}^*$-algebras with only the trivial projections while the second subsection considers $\mathrm{C}^*$-algebras with real rank zero.

\section{Preliminaries} \label{Sect:Prelim}

By a \emph{dynamical system}, we mean a compact  Hausdorff space $X$, which for the purposes of this paper is always assumed to be metrizable, equipped 
with a homeomorphism $\varphi : X \to X$.  The \emph{$\varphi$-orbit} of a point $x$ in $X$ is the set $\{ \varphi^{n}(x) 
\mid n \in \mathbb{Z} \}$.

In the sequel, $\varphi$ will always induces a \emph{free} action of the integers on $X$, which is to say that if there exists $x \in X$ with $\varphi^{n}(x)=x$, then $n=0$. The homeomorphism $\varphi$ is \emph{minimal}, or $(X, \varphi)$ is a \emph{minimal dynamical system}, if the only closed subsets $Y \subseteq X$ with $\varphi(Y)=Y$ are $X$ and the empty set. This is equivalent to the condition that every  $\varphi$-orbit is dense in $X$. If $X$ is infinite, then any minimal dynamical system induces a free action.
 
Given two dynamical systems $(X, \varphi)$ and $(Y, \psi)$ a map $\pi : X \to Y$ is called  a \emph{factor map} if $\pi$  is a continuous surjection satisfying $\pi \circ \varphi = \psi \circ \pi$.   In this case, $(Y, \psi)$ is called a \emph{factor} of $(X, \varphi)$ and   $(X, \varphi)$ is a called an \emph{extension} of $(Y, \psi)$. If $\pi : (X, \varphi) \to (Y, \psi)$ is a factor map then  $\pi : (X, \varphi^n) \to (Y, \psi^n)$ is also a factor map for any integer $n$. A factor map $\pi : X \to Y$ is 
\emph{almost one-to-one} if it is one-to-one on a residual subset of $X$.

Any minimal dynamical system $(X, \varphi)$ has an associated topological groupoid, the \emph{transformation groupoid} $X \rtimes_{\varphi} \mathbb{Z}$, see for example \cite{MR584266}. Here, since our systems will always be free,  it will be more convenient to reformulate the transformation groupoid as the orbit equivalence relation on $X$. Given a free dynamical system $(X, \varphi)$,  define the orbit equivalence relation 
\[
\mathcal{R}_{\varphi} := \{ (x, \varphi^{n}(x) ) \mid x \in X, n \in \mathbb{Z} \},
\]
which is an equivalence relation whose equivalence classes are simply the $\varphi$-orbits. As  the dynamical system is free, the map 
\[ X \rtimes_{\varphi} \mathbb{Z} \to \mathcal{R}_{\varphi}, \quad  (x, n) \mapsto (x, \varphi^{n}(x)) \] is 
 a bijection. We endow $X \rtimes_{\varphi} \mathbb{Z}$ with the product topology and equip $\mathcal{R}_{\varphi}$ with a topology via this map, 
 that is, $\mathcal{R}_{\varphi}$ is given the unique topology which makes this map a 
 homeomorphism. This endows the equivalence relation $\mathcal{R}_{\varphi}$ with an \'etale topology: the topology on an equivalence relation $\mathcal{R} \subset X \times X$ is \'etale if the maps $\mathcal{R} \to X$ given by $(x,y) \mapsto x$ and $(x,y) \mapsto y$ are local homeomorphisms. (Note that in the topological groupoid literature the term \emph{equivalence relation} is sometimes reserved for an equivalence relation $\mathcal{R} \subset X \times X$ with topology inherited from the product topology on $X \times X$, whereas an equivalence relation equipped with any another topology is called a \emph{principal groupoid}.)
 
An equivalence relation on a compact metric space $X$ is \emph{minimal} if, for every $x \in X$, the equivalence class of $x$ is dense in $X$.  Observe that if $(X, \varphi)$ is a free dynamical system, then $\mathcal{R}_{\varphi}$ is minimal if and only if $(X, \varphi)$ is minimal.

Suppose $(X, \varphi)$ is a minimal dynamical system and $Y \subseteq X$ is a closed 
non-empty subset of $X$. We say that $Y$ \emph{meets every orbit at most once} if $\varphi^n(Y) \cap Y = \emptyset$ for each $n \neq 0$. Given a minimal dynamical system $(X, \varphi)$ and closed non-empty subset $Y \subset X$ meeting every orbit at most once, we construct another equivalence relation using the groupoid construction of \cite[ Example 2.6]{Put:K-theoryGroupoids}, 
which was originally used in the $\mathrm{C}^*$-algebraic setting in \cite{Putnam:MinHomCantor}. 
 Define $\mathcal{R}_Y \subseteq \mathcal{R}_{\varphi}$ to be the subequivalence relation obtained from splitting every orbit that passes through $Y$ into two equivalence classes. Specifically,
 \[ 
 \mathcal{R}_{Y} = \mathcal{R}_{\varphi} \setminus \{ (\varphi^{k}(y), \varphi^{l}(y)) 
 \mid y \in Y, \quad  l < 1 \leq k \text{ or }  k < 1 \leq l \}.
 \]
 
 It is a simple matter to check that this is an open  subequivalence relation of $\mathcal{R}_{\varphi}$ and  hence is also \'{e}tale. If some  $\varphi$-orbit does not meet $Y$, then  it is an equivalence class in both $\mathcal{R}_{\varphi}$ and $\mathcal{R}_{Y}$. If the  orbit does meet $Y$, say at the point $y$, then its $\varphi$-orbit becomes two distinct equivalence classes in $\mathcal{R}_{Y}$, namely $\{ \varphi^{n}(y) \mid n \geq 1 \}$ and   $\{ \varphi^{n}(y) \mid n \leq 0 \}$. In this sense, the orbit is ``broken'' in two at the point $y$.

Since $X$ is compact, for any point $x \in X$ both its forward orbit and backward orbit are dense in $X$. Thus we make the following observation.

\begin{proposition} \label{equDense}
If $(X, \varphi)$ is a minimal system, $X$ is infinite and $Y$ is a closed non-empty subset of $X$ that meets each orbit at most once, then
$\mathcal{R}_Y$ is minimal. 
\end{proposition}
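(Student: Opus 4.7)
The plan is to identify the equivalence classes of $\mathcal{R}_Y$ explicitly and then show each one is dense in $X$. Since $Y$ meets every $\varphi$-orbit at most once, every $\mathcal{R}_Y$-class is either (i) a full $\varphi$-orbit $\{\varphi^n(x) : n \in \Z\}$ whose orbit misses $Y$, or (ii) a ``half orbit'' $\{\varphi^n(y) : n \geq 1\}$ or $\{\varphi^n(y) : n \leq 0\}$ for some $y \in Y$. Case (i) is immediate: full $\varphi$-orbits are dense by minimality of $(X,\varphi)$. The content of the proposition is therefore case (ii), which reduces to the statement that forward and backward $\varphi$-orbits of arbitrary points are dense in $X$.

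To establish that fact, I would fix $x \in X$ and consider the closed set $C = \overline{\{\varphi^n(x) : n \geq 0\}}$, which satisfies $\varphi(C) \subseteq C$. The set $K = \bigcap_{n \geq 0} \varphi^n(C)$ is then a decreasing intersection of non-empty compact sets (each $\varphi^n(C)$ is compact as a continuous image of $C$), hence non-empty. A direct check shows $\varphi(K) = K$: the inclusion $\varphi(K) \subseteq K$ is clear, and conversely if $z \in K$ then $\varphi^{-1}(z) \in \varphi^{n-1}(C)$ for all $n \geq 1$, so $\varphi^{-1}(z) \in K$ and $z \in \varphi(K)$. Minimality of $(X,\varphi)$ then forces $K = X$, and in particular $C = X$, so forward orbits are dense. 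The same argument applied to $\varphi^{-1}$ (which is also a minimal homeomorphism of $X$) shows that backward orbits are dense.

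With forward and backward density in hand, the broken classes in case (ii) are handled: $\{\varphi^n(y) : n \geq 1\}$ is the forward orbit of $\varphi(y)$ and $\{\varphi^n(y) : n \leq 0\}$ is the backward orbit of $y$, and both are therefore dense in $X$. This exhausts every $\mathcal{R}_Y$-class and completes the proof.

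I do not expect a genuine obstacle here; the only subtle step is the passage from density of full orbits to density of one-sided orbits, which requires the compactness argument above (one cannot directly invoke $\varphi$-invariance of $C$ since $\varphi(C)$ may be a proper subset of $C$). The hypothesis that $X$ be infinite is used implicitly via the freeness of the minimal action on an infinite compact metric space, ensuring that the half-orbits in question are genuinely infinite sets whose density is a meaningful statement.
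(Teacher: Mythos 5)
Your proof is correct and follows essentially the same route as the paper, which simply states that compactness plus minimality makes forward and backward orbits dense and leaves it at that. You additionally supply the standard compactness argument (the nested intersection $K = \bigcap_{n \ge 0}\varphi^n(C)$ being a nonempty closed $\varphi$-invariant set) justifying that one-sided orbits are dense, which the paper treats as a well-known observation; the rest of your decomposition of $\mathcal{R}_Y$-classes matches the paper's.
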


Let $\mathcal{R} \subset X \times X$ be an equivalence relation equipped with an \'etale topology, such as $\mathcal{R}_{\varphi}$ or $\mathcal{R}_{Y}$.  Using the method of Renault in  \cite[Chapter II]{MR584266} we  construct the reduced groupoid $\mathrm{C}^*$-algebra $\mathrm{C}^*_r(\mathcal{R})$ as follows. Equip the linear space $C_c(\mathcal{R})$ of compactly supported continuous functions $\mathcal{R} \to \mathbb{C}$ with a product and involution given by
\begin{eqnarray}
 (fg)(x, x') &=& \sum_{\substack{y \in X\\(x,y), (y,x') \in \mathcal{R}}} f(x, y)g(y,x'), \\
(f^*)(x,x') &=& \overline{f(x',x)},
\end{eqnarray}
for $f, g \in C_c(\mathcal{R})$ and $(x,x') \in \mathcal{R}$. This makes $C_c(\mathcal{R})$ into a $^*$-algebra. Let $\ell^2(\mathcal{R})$ denote the Hilbert space of square summable functions on $\mathcal{R}$. Then we define the regular representation $\lambda : C_c(\mathcal{R}) \to \mathcal{B}(\ell^2(\mathcal{R}))$ 
\[ (\lambda(f) \xi)(x, x') =  \sum_{\substack{y \in X\\(x,y), (y,x') \in \mathcal{R}}} f(x, y)\xi(y,x'),\]
for $f \in  C_c(\mathcal{R})$, $\xi \in \ell^2(\mathcal{R})$ and $(x,x') \in \mathcal{R}$. The reduced groupoid $\mathrm{C}^*$-algebra $\mathrm{C}^*_{r}(\mathcal{R})$ is then the closure of $\lambda(C_c(\mathcal{R}))$ with respect to the norm on $\mathcal{B}(\ell^2(\mathcal{R}))$. There is also a full groupoid $\mathrm{C}^*$-algebra, however if the  groupoid is amenable its  full and reduced $\mathrm{C}^{*}$-algebras coincide. In this case, we will simply denote the groupoids $\mathrm{C}^*$-algebra as $\mathrm{C}^*(\mathcal{R})$.  For any dynamical system $(X, \varphi)$, the equivalence relation $\mathcal{R}_{\varphi}$ is always amenable \cite[Example II.3.10]{MR584266}. When $Y$ is a closed nonempty subset meeting every $\varphi$-orbit at most once, then $\mathcal{R}_Y$ is also amenable, since it is an open subequivalence relation of $\mathcal{R}_{\varphi}$ \cite[Proposition 5.1.1]{MR1799683}.

Let $(X, \varphi)$ be a minimal dynamical system and a nonempty closed subset $Y$ meeting every $\varphi$-orbit at most once. The $\mathrm{C}^*$-algebra associated with $\mathcal{R}_{\varphi}$ is isomorphic to the crossed product $\mathrm{C}^*$-algebra $C(X) \rtimes_{\alpha} \mathbb{Z}$. Since $\mathcal{R}_{Y}$ is an open subequivalence relation of $\mathcal{R}_{\varphi}$, there is an inclusion  $C_c(\mathcal{R}_{Y} ) \subseteq  C_c(\mathcal{R}_{\varphi})$  by simply setting a function $f \in C_c(\mathcal{R}_Y)$ to zero on $\mathcal{R}_{\varphi} \setminus \mathcal{R}_Y$. This extends to a  unital inclusion of $\mathrm{C}^*$-algebras. If $A := \mathrm{C}^*(\mathcal{R}_{\varphi}) \cong C(X) \rtimes_{\varphi} \mathbb{Z}$, we will usually denote $\mathrm{C}^*(\mathcal{R}_Y)$ by $A_Y$.  Observe that both $A$ and $A_Y$ contain copies of $C(X)$, so, taken together with the inclusion $\iota : A_Y \into A$,  we have a commutative diagram
\begin{displaymath} 
\xymatrix{ & C(X) \ar[dl]_{i_1} \ar[dr]^{i_2} &\\
A_{\tilde{W}} \ar[rr]_{\iota} & & A.}
\end{displaymath}
 
Another description of the $\mathrm{C}^*$-subalgebra $A_{Y}$ often encountered in the literature  is given by the $\mathrm{C}^*$-subalgebra of $A$ generated by subsets $C(X)$ and $u C_0(X \setminus Y)$,
\[
A_Y = C^*(C(X), uC_0(X \setminus Y)) \subseteq C(X) \rtimes_{\varphi} \mathbb{Z}, 
\]
where $C(X) \subset A$ is the standard inclusion
and $u$ is the unitary inducing $\varphi$, that is, the unitary $u \in A$ satisfying $ ufu^* = f \circ \varphi^{-1}$ whenever $ f\in C(X)$.

An important invariant for dynamical systems is its mean dimension. Mean dimension, introduced by Gromov \cite{MR1742309}, is an invariant of dynamical systems akin to a dynamical version of covering dimension. We refer the reader to \cite{EllNiu:MeanDimZero, LinWeiss:MTD}
or to a more complete discussion in Section 4. In light of the next result, here we are interested in systems with mean dimension zero.

\begin{theorem} \label{classifiableTheorem}
Let $(X, \varphi)$ be a minimal dynamical system with $X$ an infinite compact metric space. Suppose that $(X, \varphi)$ has mean dimension zero. Then $A := C(X) \rtimes_{\varphi} \mathbb{Z}$ is classifiable. Furthermore, if $Y \subset X$ is a nonempty closed subset meeting every orbit at most once, then $A_Y$ is also classifiable.
\end{theorem}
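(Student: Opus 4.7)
The plan is to verify the six conditions comprising the definition of \emph{classifiable}---simple, separable, unital, infinite-dimensional, finite nuclear dimension, and UCT---for both $A$ and $A_Y$ by assembling the ingredients cited in the introduction. Fortunately, almost all of these properties either are already established in the preliminaries or are direct consequences of deep results already in the literature, so the ``proof'' is essentially an exercise in bookkeeping.

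First I would dispense with the elementary properties. Unitality is immediate, since $1_{C(X)}$ lies in both $A$ and $A_Y$. Separability follows because $X$ is compact metrizable, hence $C(X)$ is separable, and $A$ is a crossed product by $\Z$; for $A_Y \subseteq A$, separability is inherited. Infinite-dimensionality holds because $C(X) \hookrightarrow A_Y \hookrightarrow A$ and $X$ is infinite. For simplicity, the minimality and freeness of $(X, \varphi)$ (automatic in the infinite minimal case) gives that $A = C(X) \rtimes_\varphi \Z$ is simple. For $A_Y$, Proposition \ref{equDense} shows that $\mathcal{R}_Y$ is minimal, and $\mathcal{R}_Y$ is principal as a subequivalence relation of the free $\mathcal{R}_\varphi$; by Renault's simplicity criterion for reduced groupoid $\mathrm{C}^*$-algebras of topologically principal minimal étale equivalence relations, $A_Y$ is simple. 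Nuclearity for both follows from the amenability of $\mathcal{R}_\varphi$ and $\mathcal{R}_Y$ noted in the preliminaries. The UCT for both $A$ and $A_Y$ is then immediate from the cited result of Tu, which guarantees the UCT for $\mathrm{C}^*$-algebras of amenable étale groupoids.

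The only nontrivial property is finite nuclear dimension. Here I would invoke the chain of results quoted in the introduction: the theorem of Elliott and Niu gives that mean dimension zero of $(X, \varphi)$ implies $A \cong A \otimes \mathcal{Z}$; the work of Archey, Buck and Phillips then transfers $\mathcal{Z}$-stability from $A$ to the orbit-breaking subalgebra $A_Y$; and finally, the equivalence of $\mathcal{Z}$-stability and finite nuclear dimension for simple, separable, nuclear, unital, infinite-dimensional $\mathrm{C}^*$-algebras proved in CETWW (building on earlier partial results) upgrades this to finite nuclear dimension of both $A$ and $A_Y$.

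Since each condition has already been matched to an existing theorem, there is no serious combinatorial or technical obstacle---the only thing that requires any care is ensuring that the hypotheses of the Archey--Buck--Phillips transfer result are met in our setting (simple minimal system on an infinite compact metrizable space, $Y$ a nonempty closed set meeting each orbit at most once, mean dimension zero), which we have set up verbatim. Assembling the citations in the order above then completes the proof.
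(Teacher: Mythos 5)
Your proposal is correct and follows essentially the same route as the paper: the elementary properties plus amenability (nuclearity, UCT via Tu) and minimality of $\mathcal{R}_Y$ for simplicity, then Elliott--Niu for $\mathcal{Z}$-stability of $A$, the Archey--Buck--Phillips transfer for $A_Y$, and the $\mathcal{Z}$-stability/finite nuclear dimension equivalence. The only detail you leave implicit is the precise hypothesis of the transfer result, namely that $A_Y$ is a \emph{centrally large} subalgebra of $A$ (established in Archey--Phillips), which is exactly what the paper cites to justify that step.
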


\begin{proof}
 The first statement follows from the results of \cite{EllNiu:MeanDimZero}, where it is shown that mean dimension zero implies $\mathcal{Z}$-stability of the crossed product.  That $A_Y$ is separable and unital is clear. Since $\mathcal{R}_Y$ is amenable, $A_Y$ is nuclear (see for example \cite[Corollary 6.2.14]{MR1799683}) and satisfies the UCT \cite{Tu:Groupoids}. Moreover, since $Y$ meets every $\varphi$-orbit at most once, Proposition \ref{equDense} implies that $A_Y$ is a simple $\mathrm{C}^*$-algebra. Finally, since $A_Y$ is a centrally large subalgebra of $A$ (see \cite[Section 4]{ArcPhi2016}) it follows that $A_Y$ is also $\mathcal{Z}$-stable \cite[Theorem 2.4]{ArBkPh-Z}.
\end{proof}

We are particularly interested 
in the $K$-theory of these $\mathrm{C}^*$-algebras. The principal tool
for the computation of $K_{*}(C(X) \rtimes_{\varphi} \mathbb{Z})$ is the Pimsner--Voiculescu exact sequence, which is given by
\begin{displaymath} 
\xymatrix{
K^0(X) \ar[r]^-{\id-\varphi_*} & K^0(X) \ar[r] &  K_0(C(X) \rtimes_{\varphi} \Z)  \ar[d]^{\partial_{\rm PV}}\\
 K_1(C(X) \rtimes_{\varphi} \Z) \ar[u]^{\partial_{\rm PV}} &  K^1(X) \ar[l] & K^1(X) \ar[l]_-{\id-\varphi_*}.}
\end{displaymath}

For orbit-breaking subalgebras $\mathrm{C}^*(\mathcal{R}_Y)$, we use the following exact sequence which relates the $K$-theory of $\mathrm{C}^*(\mathcal{R}_Y)$ to the $K$-theory of  $C(Y)$ and $C(X) \times_{\varphi} \mathbb{Z}$.

\begin{theorem}[Theorem 2.4 and Example 2.6 of \cite{Put:K-theoryGroupoids}] \label{PutExtSeq} 
Let $(X, \varphi)$ be a minimal dynamical system and $Y \subseteq X$ a closed non-empty subset of $X$ which meets 
every orbit at most once. Let $ A:= C(X) \rtimes_{\varphi} \mathbb{Z}$ and let $\iota : A_Y \into A$ be the inclusion map. Then there exists a six-term exact sequence
\begin{displaymath} 
\xymatrix{ K^0(Y) \ar[r] & K_0(A_Y) \ar[r]^-{\iota_*} & K_0(C(X) \rtimes_{\varphi} \mathbb{Z}) \ar[d]^{\partial_{\rm OB}}\\
K_1(C(X) \rtimes_{\varphi} \mathbb{Z} ) \ar[u]^{\partial_{\rm OB}} & K_1(A_Y) \ar[l]_-{\iota_*} & K^1(Y). \ar[l]}
\end{displaymath}
\end{theorem}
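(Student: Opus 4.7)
The plan is to derive the desired six-term sequence as the K-theory long exact sequence associated with a short exact sequence of C*-algebras constructed from the groupoid inclusion $\mathcal{R}_Y \hookrightarrow \mathcal{R}_\varphi$. At the groupoid level, $\mathcal{R}_Y$ is open in $\mathcal{R}_\varphi$ with the same unit space, and the ``missing arrows''---those in $\mathcal{R}_\varphi \setminus \mathcal{R}_Y$---form a closed subset parametrized by $Y$: for each $y \in Y$ we delete precisely the arrows $(\varphi^{k}(y), \varphi^{\ell}(y))$ with $k$ and $\ell$ on opposite sides of the cut between $0$ and $1$. This combinatorial structure living over $Y$ is what identifies $K^*(Y)$ as the natural source of the extra terms in the exact sequence.

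Next, I would construct an auxiliary \'etale groupoid $\tilde{\mathcal{G}}$ (with corresponding C*-algebra $B = \mathrm{C}^*(\tilde{\mathcal{G}})$) fitting into a short exact sequence
$$0 \to J \to B \to A \to 0,$$
where $J$ is Morita equivalent to $C(Y)$ (so $K_*(J) \cong K^*(Y)$), together with an identification of $B$ with $A_Y$ up to Morita or KK-equivalence. A natural candidate is a Toeplitz-type construction: view $A_Y$ as a partial crossed product of $C(X)$ by $\mathbb{Z}$ arising from the partial homeomorphism obtained by restricting $\varphi$ to $X \setminus \varphi^{-1}(Y) \to X \setminus Y$, and take $B$ to be the corresponding Toeplitz algebra. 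The extension class in $\mathrm{Ext}(A, C(Y))$ then encodes the orbit-breaking operation, and its K-theoretic boundary realizes $\partial_{\rm OB}$. Applying the standard six-term K-theory exact sequence to this extension yields the claimed sequence, with $\iota_*$ induced by the quotient $B \to A$ composed with the identification $B \sim A_Y$.

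The main obstacle is the construction of the auxiliary groupoid and verification of the equivalence $B \sim A_Y$. Because $Y$ need not be clopen in $X$, naive doublings of $X$ at $Y$ fail to yield a Hausdorff compactification compatible with the dynamics, so the construction must be carried out intrinsically at the level of groupoids (or partial actions) rather than spaces. Once the correct $B$ is identified, the remaining steps are standard applications of the six-term exact sequence in K-theory and invariance under Morita (or stable) equivalence.
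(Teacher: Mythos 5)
Your plan is structurally plausible but stops short of the actual content of the theorem. You want a short exact sequence $0 \to J \to B \to A \to 0$ with $J$ Morita equivalent to $C(Y)$ and $B$ KK-equivalent to $A_Y$, but you never produce such a $B$, and the groupoid picture you gesture at does not directly furnish one: $\mathcal{R}_\varphi\setminus\mathcal{R}_Y$ is not a subgroupoid (composing two ``missing'' arrows can land back inside $\mathcal{R}_Y$), so there is no ideal-quotient decomposition of groupoid $\mathrm{C}^*$-algebras hiding in the inclusion $\mathcal{R}_Y\subset\mathcal{R}_\varphi$. The Toeplitz/partial-crossed-product idea would most naturally give an extension of $A_Y$ with kernel related to $C(X)$ or $C_0(X\setminus Y)$, not an extension of $A$ by something Morita equivalent to $C(Y)$, so even if carried out it does not obviously yield the stated sequence. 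In short, the decisive computational step is missing.

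The actual argument (in \cite{Put:K-theoryGroupoids}, cited by the paper and sketched in the discussion preceding Lemma~\ref{boundaryMapLemma}) proceeds via relative $K$-theory rather than an ideal extension. One forms the relative groups $K_*(A_Y;A)$ --- equivalently, the $K$-theory of the mapping cone of $\iota\colon A_Y\hookrightarrow A$ --- which automatically sit in the right long exact sequence. The substantive input is the excision theorem of \cite{Put:Excision}: it identifies $K_*(A_Y;A)$ with $K_*(C_Y;C)$, where $C=\mathrm{C}^*(\mathcal{S}_\varphi)\cong C(Y)\otimes\mathcal{K}(\ell^2(\mathbb{Z}))$ and $C_Y=C_Y^+\oplus C_Y^-$ are the elementary algebras built from the trivial action on $Y\times\mathbb{Z}$ with the analogous orbit-break. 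For this concrete pair one computes $K_*(C_Y;C)\cong K^*(Y)$ by hand (the paper records the short exact sequence $0\to K_0(C_Y;C)\to K_0(C_Y^+)\oplus K_0(C_Y^-)\to K_0(C)\to 0$). That excision step --- replacing the intractable inclusion $A_Y\subset A$ by the tractable $C_Y\subset C$ --- is precisely the idea your proposal lacks, and without it there is no route to identifying the third terms of the sequence with $K^*(Y)$.
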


The details for the remaining maps in this six-term exact sequence are given in \cite{Put:K-theoryGroupoids} and also discussed further in Section~\ref{Sect:K}.

In addition to the six-term exact sequence which allows us to relate the $K$-theory of an orbit-breaking subalgebra $A_Y$ to the containing crossed product $A = C(X) \rtimes_{\varphi} \mathbb{Z}$, we can also compare their tracial state spaces. In fact, by \cite[Theorem 6.2, Theorem 7.10]{Phi:LargeSubalgebras}, the restriction map 
\[ T(A) \to T(A_Y), \quad \tau \mapsto \tau|_{A_Y} \]
is bijective.

\section{Minimal systems} \label{Sect:MinSys}

In this section we discuss three methods for constructing minimal dynamical systems that have properties with interesting implications for the associated $\mathrm{C}^*$-algebras. First, we recall the construction of minimal homeomorphisms on point-like spaces from \cite{DPS:DynZ}. Then, by constructing minimal skew product systems we consider Question \ref{question1} in the case of a finite CW-complex.  \vspace{0.1cm}\\
{\bf Question:} Given $W$ a finite CW-complex does there exist a compact metric space $\widetilde{W}$ with  $K^*(\widetilde{W}) \cong K^*(W)$ that admits a minimal homeomorphism? \vspace{0.1cm} \\
Theorem~\ref{Thm:ExistenceOfMinHomeo} establishes a positive answer. We note that since we are only interested in the $K$-theory of $W$, we can and will assume $W$ is connected. Finally, we recall a construction of minimal homeomorphisms on nonhomogeneous metric spaces from \cite{DPS:nonHom} which will allows us to show in Section~\ref{Subsect:RR0} that there exist orbit-breaking subalgebras with interesting $K$-theory and many projections.

\subsection{Minimal dynamical systems on point-like spaces} \label{constructionZ}
In \cite{DPS:DynZ}, the authors constructed a minimal homeomorphisms on  infinite ``point-like'' spaces; that is,  infinite compact connected metric spaces that have both the same \v{C}ech cohomology and topological $K$-theory as a point and moreover have finite covering dimension \cite[Corollary 1.12]{DPS:DynZ}. As these systems will play the main role in the sequel, we review some of their properties.

A generalized cohomology theory is called \emph{continuous} if, given an inverse limit of spaces, there exists an associated inverse limit at the level of the cohomology theory. The interest reader can find more on this notion in \cite[Section 21.3]{Bla:K-theory} (note that in \cite{Bla:K-theory} these notions are formulated in $\mathrm{C}^*$-algebraic terms). Two examples of continuous generalized cohomology theories are \v{C}ech cohomology and $K$-theory. These are the most relevant in this paper.

The existence of minimal diffeomorphisms of odd dimensional spheres of dimension at least three was proved by Fathi and Herman in the uniquely ergodic case \cite{FatHer:Diffeo} and later generalized by Windsor \cite{Wind:not_uniquely_ergo} to minimal diffeomorphisms with a prescribed number of ergodic measures.  In \cite{DPS:DynZ}, the authors showed that such a minimal diffeomorphism can be used to construct minimal dynamical systems on a point-like space $Z$. Given a minimal diffeomorphism $\varphi : S^d \to S^d$, $d \geq 3$ odd, the associated space $Z$ is constructed by removing a subset $ L_{\infty}$ that is a $\varphi$-invariant immersion of $\mathbb{R}$ in $S^d$,  and completing $S^d \setminus L_{\infty}$ with respect to a metric obtained from the inverse limit structure. A homeomorphism $\zeta : Z \to Z$ is then given by extending the map $\varphi : S^d \setminus L_{\infty} \to S^d \setminus L_{\infty}$ to $Z$.  

There is a factor map $q : Z \to S^d$ which is one-to-one on $S^d \setminus L_{\infty}$, and every $\zeta$-invariant Borel probability measure $\mu$ satisfies $\mu(S^d \setminus L_{\infty}) = 1$. Further details for the factor map can be found in \cite[Corollary 1.16, Lemma 1.14]{DPS:DynZ}, and the reader is directed to the rest of the paper for the general construction of these minimal dynamical systems. We summarize the main aspects in the theorem below.

\begin{theorem} \label{ThmAboutZ}
Let $S^d$  be a sphere of odd dimension $d\geq 3$, and let $\varphi : S^d \to S^d$  be a minimal diffeomorphism. Then there exist an infinite compact metric space $Z$ with finite covering dimension and a minimal homeomorphism $\zeta : Z \to Z$ satisfying the following.
\begin{enumerate}
\item $Z$ is compact, connected, and homeomorphic to an inverse limit of contractible metric spaces $(Z_n, d_n)_{n \in \mathbb{N}}$.
\item For any continuous generalized cohomology theory we have an isomorphism $H^*(Z) \cong H^*(\{\mathrm{pt}\})$. In particular this holds for \v{C}ech cohomology and $K$-theory. \label{sameCohomAsPoint}
\item There is an almost one-to-one factor map $q : Z \to S^d$ which induces a bijection between $\zeta$-invariant Borel probability measures on $Z$ and $\varphi$-invariant Borel probability measures on $S^d$. \label{FactorMap}
\end{enumerate} 
\end{theorem}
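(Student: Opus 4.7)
The plan is to follow the construction given in \cite{DPS:DynZ}, whose main idea is to replace a dense orbit in $S^d$ by an inverse limit of contractible approximations so that the resulting space has trivial cohomology while retaining enough geometric structure to support a minimal homeomorphism. First, using the minimality of $\varphi$, I would fix a single dense $\varphi$-orbit in $S^d$ and thicken it into a $\varphi$-equivariant smooth immersion $L_\infty : \R \to S^d$ by joining successive orbit points with smoothly chosen embedded arcs. The assumption that $d \geq 3$ is odd enters here: it provides the room needed to realize such an immersion without self-intersections of the wrong type, and it is also the regime in which Fathi--Herman/Windsor guarantee the existence of the minimal $\varphi$ in the first place.

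Next, I would build an inverse system $(Z_n, p_n)$ of compact, connected, contractible, finite-dimensional metric spaces. Heuristically, $Z_n$ is $S^d$ ``cut open'' along the portion of $L_\infty$ indexed by a bounded interval and then surgered so as to be contractible, with $p_n : Z_{n+1} \to Z_n$ collapsing the additional structure. Setting $Z := \varprojlim Z_n$, the space $Z$ is automatically compact and connected; finite covering dimension follows from a uniform bound $\dim Z_n \leq d$. Conclusion (1) then holds by construction, and (2) is an immediate consequence of the continuity of the cohomology theory:
\[
H^*(Z) \;\cong\; \varinjlim H^*(Z_n) \;\cong\; \varinjlim H^*(\{\mathrm{pt}\}) \;\cong\; H^*(\{\mathrm{pt}\}),
\]
using the contractibility of each $Z_n$. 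Both \v{C}ech cohomology and topological $K$-theory are continuous in this sense, giving the ``point-like'' property.

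For (3), the natural collapse maps $Z_n \to S^d$ (inverting the surgery) assemble to a continuous surjection $q : Z \to S^d$, and $q$ is one-to-one off of $q^{-1}(L_\infty)$, which is contained in the complement of a residual subset of $Z$; hence $q$ is almost one-to-one. The homeomorphism $\zeta$ is obtained by checking that $\varphi$ lifts compatibly through the bonding maps $p_n$, using the $\varphi$-equivariance built into $L_\infty$ and the surgery; minimality of $\zeta$ is then forced by minimality of $\varphi$ together with the fact that $q$ is almost one-to-one. For the measure bijection, pushforward by $q$ sends $\zeta$-invariant probability measures to $\varphi$-invariant ones; for the inverse, one observes that any $\varphi$-invariant Borel probability measure $\mu$ on $S^d$ must satisfy $\mu(L_\infty) = 0$, since $L_\infty$ is the image of $\R$ under an injective $\varphi$-equivariant map and $\varphi$ acts freely. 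Consequently $\mu$ is supported on $S^d \setminus L_\infty$, on which $q$ is a Borel isomorphism, and one pulls back uniquely.

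The hard part, carried out in detail in \cite{DPS:DynZ}, is the simultaneous design of the immersion $L_\infty$, the surgered spaces $Z_n$, the metric making $Z$ into the desired inverse limit, and the lifts of $\varphi$, so that everything is compatible: the $Z_n$ must be contractible and finite-dimensional, the bonding maps must intertwine the lifts of $\varphi$, and the extension of $\varphi$ from $S^d \setminus L_\infty$ to $Z$ must be continuous. The delicate point is producing the surgery in a $\varphi$-equivariant way while keeping $Z_n$ contractible; all the topological and dynamical conclusions follow relatively cleanly once this engineering is in place.
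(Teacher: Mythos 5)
Your outline matches the paper's treatment: the paper does not reprove this theorem but presents it as a summary of \cite{DPS:DynZ}, describing exactly the construction you sketch --- removing a $\varphi$-invariant immersed copy $L_\infty$ of $\mathbb{R}$ through a dense orbit, forming $Z$ as the completion/inverse limit of contractible finite-stage spaces obtained from $S^d \setminus L_\infty$, extending $\varphi$ to $\zeta$, and using the factor map $q$ (one-to-one off $L_\infty$, with $\mu(L_\infty)=0$ for every invariant measure) to get the measure bijection. Two cosmetic remarks: no extra ``surgery'' is needed to make the finite-stage spaces contractible, since cutting $S^d$ open along a compact arc already yields a contractible space; and minimality of $\zeta$ uses not just that $q$ is almost one-to-one in the abstract but that the set where it is injective is dense in $Z$ (which holds because $Z$ is the completion of $S^d\setminus L_\infty$) --- but, as you correctly note, all of this engineering is carried out in \cite{DPS:DynZ}.
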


\subsection{Minimal skew products} \label{minSkewProSec}

In \cite{GlaWei:MinSkePro}, Glasner and Weiss give conditions for when one may obtain a minimal dynamical system from skew products. We are interested in skew product systems arising from a minimal homeomorphism on a point-like space. First, let us recall some notation from \cite{GlaWei:MinSkePro}. For a compact metric space $X$ with metric $d$, let ${\rm Homeo}(X)$ denote the space of homeomorphisms of $X$ equipped with the metric, which in an abuse of notation we also denote by $d$, given by
\[ d(g,h) = \sup_{x \in X} d(g(x), h(x)) + \sup_{x \in X} d(g^{-1}(x), h^{-1}(x)).\]

 Let $(Z, \zeta)$ be a minimal dynamical system given by Theorem~\ref{ThmAboutZ}. For a compact metric space $Y$, let $X:=Z\times Y$. Define a subset of ${\rm Homeo}(X)$ by
\[
\mathcal{O}(\zeta\times \id) =\{ G^{-1} \circ (\zeta \times \id )\circ G \: | \: G \in {\rm Homeo}(X). \}
\]
We are also be interested in subsets of $\mathcal{O}(\zeta\times id)$ of the form
\[
\mathcal{S}(\zeta\times \id) =\{ G^{-1} \circ (\zeta \times \id )\circ G \: | \: G \in {\rm Homeo}(X) \hbox{ such that }G(z,y)=(z, g_z(y)) \},
\]
where $z \mapsto g_z$ is a continuous map from $Z$ to ${\rm Homeo}(Y)$.

Applying Theorem 1 of \cite{GlaWei:MinSkePro} gives the following result:

\begin{theorem} \label{GlasnerWeissToZ}
Suppose $Y$ is a compact metric space and that $\mathcal{G}$ is a path connected subgroup of ${\rm Homeo}(Y)$ such that $(Y, \mathcal{G})$ is minimal. Then there is a residual subset of $\overline{\mathcal{O}(\zeta\times \id)}$ which consists entirely of minimal homeomorphisms. Likewise, there is a residual subset of $\overline{\mathcal{S}(\zeta\times \id)}$ which consists entirely of minimal homeomorphisms.
\end{theorem}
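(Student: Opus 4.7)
The statement is advertised as a direct consequence of Theorem 1 of \cite{GlaWei:MinSkePro}, so the plan of proof is essentially a verification of hypotheses and a translation of notation. First, I would confirm that the base system $(Z,\zeta)$ meets the input requirements of the Glasner--Weiss theorem: by Theorem~\ref{ThmAboutZ}, $Z$ is an infinite compact metric space, $\zeta$ is a minimal homeomorphism, and (via the almost one-to-one factor map $q: Z \to S^d$) the system admits invariant Borel probability measures. Second, the hypotheses on the fiber are precisely those appearing in \cite{GlaWei:MinSkePro}: $Y$ is compact metric and $\mathcal{G} \subseteq \mathrm{Homeo}(Y)$ is a path-connected subgroup with $(Y,\mathcal{G})$ minimal.

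With these two inputs, the assertion that minimal homeomorphisms form a residual subset of $\overline{\mathcal{O}(\zeta\times\id)}$ is the direct content of their Theorem~1, applied to the skew product $\zeta\times\id$ on $Z\times Y$. The analogous residuality statement for $\overline{\mathcal{S}(\zeta\times\id)}$ follows from the fiber-preserving version of the same argument: restricting the conjugators $G$ to have the form $G(z,y)=(z,g_z(y))$ gives a closed subset of $\mathrm{Homeo}(Z\times Y)$ in which the same perturbation arguments of Glasner--Weiss run, producing a residual set of minimal elements inside this restricted class.

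The one check that needs care, and the only place any obstacle could reasonably appear, is to confirm that the topology on $\mathrm{Homeo}(Z\times Y)$ defined above (via the $\sup$ metric $d$ on homeomorphisms and their inverses) agrees with the topology used in \cite{GlaWei:MinSkePro}, so that their notion of residual transfers directly, and that the sets $\mathcal{O}(\zeta\times\id)$ and $\mathcal{S}(\zeta\times\id)$ as defined here coincide with the corresponding conjugacy classes in their paper. Since both references work in the category of compact metric spaces with the standard uniform topology on homeomorphism groups, this identification is essentially formal, and no genuine obstacle is anticipated beyond this bookkeeping.
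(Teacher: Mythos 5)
Your proposal matches the paper exactly: the paper offers no independent argument here, stating only that the result follows by ``Applying Theorem 1 of \cite{GlaWei:MinSkePro}'' to the minimal base system $(Z,\zeta)$ from Theorem~\ref{ThmAboutZ} and the fiber $(Y,\mathcal{G})$. Your additional bookkeeping (checking the topology on ${\rm Homeo}(Z\times Y)$ and the identification of $\mathcal{O}$ and $\mathcal{S}$ with the conjugacy classes in Glasner--Weiss) is harmless and correct; note only that the existence of invariant measures is not a hypothesis needed for their theorem.
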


The reader can find a list of spaces that satisfy the hypotheses of the previous theorem on page 7 of \cite{DirMal:NonInvMinSkePro}. Although many spaces are covered, it cannot be applied directly to any finite CW-complex. However, if $W$ is a finite connected CW-complex the product of $W$ with the Hilbert cube $Q$ is a connected compact Hilbert cube manifold (see for example \cite[page 498]{West:HilbertCubeMflds}) and the hypotheses of Theorem \ref{GlasnerWeissToZ} are now satisfied. Since both the Hilbert cube $Q$ and $Z$ have the same $K$-theory and cohomology as a point, we arrive at the following.

\begin{theorem} \label{Thm:ExistenceOfMinHomeo}
Let $W$ be a finite connected CW-complex, and let $Q$ denote the Hilbert cube. Then $Z\times W\times Q$ admits a minimal homeomorphism and there are  isomorphisms
\[ H^*(Z\times W\times Q) \cong H^*(W), \quad K^*(Z \times W \times Q) \cong K^*(W)\]
of \v{Cech} cohomology and $K$-theory.
\end{theorem}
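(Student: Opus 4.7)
The plan is to apply Theorem~\ref{GlasnerWeissToZ} with $Y := W \times Q$. Since $W$ is a finite connected CW-complex, the product $W \times Q$ is a compact, connected Hilbert cube manifold; this is standard in the theory of Hilbert cube manifolds (see \cite{West:HilbertCubeMflds}, already cited in the paragraph preceding the statement). Hilbert cube manifolds are homogeneous, and the path-component of the identity in ${\rm Homeo}(W \times Q)$ is a path-connected subgroup $\mathcal{G}$ that acts transitively on $W \times Q$; in particular $(W \times Q, \mathcal{G})$ is minimal. Theorem~\ref{GlasnerWeissToZ} then produces a residual subset of $\overline{\mathcal{S}(\zeta \times \id)} \subseteq {\rm Homeo}(Z \times W \times Q)$ consisting entirely of minimal homeomorphisms, and any such homeomorphism is what we need.

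For the cohomology and $K$-theory computation, by Theorem~\ref{ThmAboutZ}(1) we have $Z \cong \varprojlim_n Z_n$ with each $Z_n$ contractible. Since $W \times Q$ is compact Hausdorff, taking products with $W \times Q$ commutes with the inverse limit, giving
\[ Z \times W \times Q \cong \varprojlim_n (Z_n \times W \times Q). \]
Each $Z_n \times W \times Q$ is homotopy equivalent to $W$ via projection, since both $Z_n$ and $Q$ are contractible; hence its \v{C}ech cohomology and topological $K$-theory both agree with those of $W$. The cohomology theories in question are continuous (see the discussion preceding Theorem~\ref{ThmAboutZ}), so
\[ H^*(Z \times W \times Q) \cong \varinjlim_n H^*(Z_n \times W \times Q) \cong H^*(W), \]
and similarly $K^*(Z \times W \times Q) \cong K^*(W)$.

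The main potential difficulty is the first step: producing a path-connected subgroup $\mathcal{G} \leq {\rm Homeo}(W \times Q)$ under which $W \times Q$ is minimal. For an arbitrary finite connected CW-complex $W$, the space $W$ itself need not admit any transitive (or even minimal) group of self-homeomorphisms, so Theorem~\ref{GlasnerWeissToZ} cannot be applied to $W$ directly. The essential trick is the passage from $W$ to the Hilbert cube manifold $W \times Q$, which is $K$-theoretically and cohomologically invisible (by contractibility of $Q$) but which enjoys the strong homogeneity and abundance of isotopies available in the Hilbert cube manifold category; this removes the obstruction and allows Theorem~\ref{GlasnerWeissToZ} to be invoked.
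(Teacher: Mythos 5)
Your proof is correct, and it differs from the paper's in a useful way for the second half. The paper's own proof simply states that the cohomology and $K$-theory computation ``follows from the K\"unneth formula, the fact that $Q$ is contractible, and Theorem~\ref{ThmAboutZ}~(\ref{sameCohomAsPoint})''. You instead pass the inverse limit presentation $Z \cong \varprojlim Z_n$ through the product (which is legitimate, since taking a product with a fixed compact space commutes with inverse limits of compact Hausdorff spaces), note that each bonding map $Z_{n+1}\to Z_n$ is automatically a homotopy equivalence because both spaces are contractible, and conclude by continuity of \v{C}ech cohomology and $K$-theory. This sidesteps the question of exactly which K\"unneth theorem applies to the non-CW compactum $Z$; since $H^*(Z)$ and $K^*(Z)$ are free and concentrated in degree zero the Tor terms vanish in any case, so the paper's route also works, but your argument is the more self-contained one. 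For the first half (existence of a minimal homeomorphism), you and the paper both apply Theorem~\ref{GlasnerWeissToZ} to the Hilbert cube manifold $W\times Q$; the paper defers the verification of the hypothesis to a reference while you spell out that the identity path component of ${\rm Homeo}(W\times Q)$ is a path-connected group acting transitively (using homogeneity of connected compact Hilbert cube manifolds and Chapman's local contractibility of their homeomorphism groups). Both are fine; your version makes the mechanism explicit.
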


\begin{proof}
The existence of the minimal homeomorphism follows using Theorem \ref{GlasnerWeissToZ}. The second statement follows from the K\"unneth formula, the fact that $Q$ is contractible, and Theorem \ref{ThmAboutZ} (\ref{sameCohomAsPoint}).
\end{proof}

\begin{example} \label{uniqueErgodic}
In the case that the system $(Z, \zeta)$ is uniquely ergodic, then  Theorem 2 in \cite{GlaWei:MinSkePro} tells us that  we can obtain a uniquely ergodic skew product systems. In this situation, the crossed product $C(Z \times W \times Q)\rtimes_{\tilde{\zeta}} \mathbb{Z}$ will have a unique tracial state and the associated crossed products are classifiable because uniquely ergodic systems always have mean dimension zero. 
\end{example}

\begin{example}
If $M$ is a closed connected manifold with finite dimension, then we can apply Theorem \ref{GlasnerWeissToZ} directly to $X=Z\times M$. In this case, $X$ is finite dimensional. Hence the resulting minimal system has mean dimension zero from which it follows that the crossed product algebra fits within classification. In the general (i.e., finite CW-complex) case, we must include the infinite dimensional Hilbert cube and hence classification becomes unclear. This will be discussed in more detail in Section \ref{Sect:MD}.
\end{example}

\begin{proposition} \label{alphaActZeroKtheory}
Suppose $\alpha: Z\times W\times Q \rightarrow Z\times W\times Q$ is a minimal homeomorphism constructed as in Theorem \ref{GlasnerWeissToZ}. Then $\alpha$ induces the identity map on $K$-theory.
\end{proposition}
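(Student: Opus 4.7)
The plan is to use that $\alpha$ lies in the closure of $\mathcal{O}(\zeta \times \id)$, so it suffices to show that every conjugate of $\zeta \times \id$ induces the identity on $K$-theory and that this property survives uniform limits in ${\rm Homeo}(Z \times W \times Q)$.

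First, I would show that $\zeta \times \id$ acts as the identity on $K^*(Z \times W \times Q)$. The coordinate projection $\pi_W : Z \times W \times Q \to W$ satisfies $\pi_W \circ (\zeta \times \id) = \pi_W$, and by the K\"unneth argument used in the proof of Theorem~\ref{Thm:ExistenceOfMinHomeo} (combining the contractibility of $Q$ with Theorem~\ref{ThmAboutZ} (\ref{sameCohomAsPoint})), the pullback $\pi_W^* : K^*(W) \to K^*(Z \times W \times Q)$ is an isomorphism. Functoriality then forces $(\zeta \times \id)^* = \id$ on $K^*(Z \times W \times Q)$. For any $\beta = G^{-1} \circ (\zeta \times \id) \circ G \in \mathcal{O}(\zeta \times \id)$, functoriality again yields $\beta^* = (G^{-1})^* \circ (\zeta \times \id)^* \circ G^* = \id$ on $K$-theory; the same conclusion applies to $\mathcal{S}(\zeta \times \id) \subseteq \mathcal{O}(\zeta \times \id)$.

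Finally, choose a sequence $\alpha_n \in \mathcal{O}(\zeta \times \id)$ with $\alpha_n \to \alpha$ in ${\rm Homeo}(Z \times W \times Q)$. Uniform convergence of homeomorphisms is precisely point-norm convergence of the associated $*$-automorphisms of $C(Z \times W \times Q)$. For any projection $p \in M_k(C(Z \times W \times Q))$, eventually $\|\alpha_n^*(p) - \alpha^*(p)\| < 1$, and hence $[\alpha_n^*(p)] = [\alpha^*(p)]$ in $K_0$; combined with $[\alpha_n^*(p)] = [p]$ from the previous paragraph, this gives $\alpha^*([p]) = [p]$. Since $K^0$ is generated by projection classes, $\alpha^*$ acts trivially on $K^0$, and the parallel argument using unitaries in $M_k(C(Z \times W \times Q))$ handles $K^1$. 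The main obstacle is this final limit step, but it is handled cleanly by the standard continuity of $K$-theory under point-norm convergence of $*$-homomorphisms, together with the fact that norm-close projections (respectively unitaries) represent the same $K$-theory class.
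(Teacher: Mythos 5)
Your proof is correct and follows essentially the same route as the paper: reduce to $\mathcal{O}(\zeta\times\id)$, show $(\zeta\times\id)^*$ is the identity, and conclude by conjugation-invariance and passage to the closure. The one substantive point where you go beyond the paper is the final limit step (uniform convergence of homeomorphisms implies point-norm convergence of the induced $*$-automorphisms, so that pullbacks of projections and unitaries eventually lie in the same $K$-theory class); the paper asserts without comment that it suffices to treat elements of $\mathcal{O}(\zeta\times\id)$, and your explicit argument is the correct justification. Your derivation of $(\zeta\times\id)^*=\id$ via the coordinate projection $\pi_W$ is also a clean alternative to the paper's citation of $\zeta^*=\id$ from \cite[Proposition 2.8]{DPS:DynZ}; a small cosmetic slip is that $\beta^*$ should be $G^*\circ(\zeta\times\id)^*\circ(G^{-1})^*$ rather than the reversed order you wrote, though the conclusion is unaffected since the middle term is the identity.
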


\begin{proof}
Since $\alpha$ is in the closure of $\mathcal{O}(\zeta\times \id) =\{ G^{-1} \circ (\zeta \times \id )\circ G \: | \: G \in {\rm Homeo}(X) \}$, we need only show that elements of $\mathcal{O}(\zeta \times \id)$ act as the identity on $K$-theory. This follows since $\zeta^*: K^*(Z) \rightarrow K^*(Z)$ is the identity map, as is shown in the proof of  \cite[Proposition 2.8]{DPS:DynZ}). Thus $(G^{-1} \circ (\zeta \times \id )\circ G)^*=G^* \circ id \circ (G^*)^{-1}=\id$.
\end{proof}

A similar but slightly different construction is also possible which will allow us to say more about invariant measures of the minimal dynamical system.   Let $(S^d, \varphi)$, $d \geq 3$ odd,  be a minimal diffeomorphism and $(Z, \zeta)$ the corresponding point-like system given by Theorem~\ref{ThmAboutZ}. For a finite CW-complex $W$ and the Hilbert cube $Q$, we apply \cite[Theorem 1]{GlaWei:MinSkePro} to $S^d\times W \times Q$ to obtain a minimal homeomorphism 
\[
\tilde{\varphi} : S^d\times W \times Q \rightarrow S^d\times W \times Q, \qquad
(s, w) \mapsto (\varphi(s), h_s (w)),
\]
where $s \in S^d$, $w\in W\times Q$ and $h : S^d \rightarrow {\rm Homeo}(W\times Q)$. Let $q : Z \to S^d$ be the factor map of Theorem~\ref{ThmAboutZ}~\ref{FactorMap}. Then we define a homeomorphism 
\[ \tilde{\zeta}: Z\times W\times Q \rightarrow Z\times W\times Q, \qquad (z, w) \mapsto (\zeta(z), h_{q(z)}(w)).
\]

\begin{proposition} \label{FacMapConTwo}
There is a factor map 
\[ \tilde{q} : (Z\times W\times Q, \tilde{\zeta}) \rightarrow (S^d\times W \times Q, \tilde{\varphi}) \]
defined by $\tilde{q}= q \times \id_{W\times Q}$.
\end{proposition}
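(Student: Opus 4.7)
The plan is to verify directly the three defining properties of a factor map: that $\tilde q$ is continuous, surjective, and intertwines $\tilde\zeta$ with $\tilde\varphi$. Continuity and surjectivity are immediate: by Theorem~\ref{ThmAboutZ}\,(\ref{FactorMap}), $q\colon Z\to S^d$ is a continuous surjection, and $\id_{W\times Q}$ is trivially continuous and surjective, so $\tilde q = q\times\id_{W\times Q}$ is a continuous surjection from $Z\times W\times Q$ onto $S^d\times W\times Q$.

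The only real content is checking equivariance, and this is a one-line computation using the definitions of $\tilde\zeta$ and $\tilde\varphi$. First I would unfold $\tilde q\circ\tilde\zeta$ at a point $(z,w)\in Z\times W\times Q$, which gives $(q(\zeta(z)), h_{q(z)}(w))$ by the definition of $\tilde\zeta$. Then I would unfold $\tilde\varphi\circ\tilde q$ at the same point, which gives $(\varphi(q(z)), h_{q(z)}(w))$ by the definition of $\tilde\varphi$. The two expressions agree because $q$ is itself a factor map from $(Z,\zeta)$ to $(S^d,\varphi)$, so $q\circ\zeta = \varphi\circ q$; the second coordinates are literally the same formula $h_{q(z)}(w)$, so no further work is needed there.

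There is no genuine obstacle in this proof; the construction of $\tilde\zeta$ was specifically designed—via the pullback along $q$ of the base map and the reuse of the fibre homeomorphisms $h_s$—so that $\tilde q$ intertwines the two systems. The only subtlety worth flagging is that one must invoke Theorem~\ref{ThmAboutZ}\,(\ref{FactorMap}) to know that $q$ is already a factor map, and note that the continuous dependence $s\mapsto h_s$ from $S^d$ to ${\rm Homeo}(W\times Q)$, composed with the continuous map $q$, ensures that $z\mapsto h_{q(z)}$ is continuous, so $\tilde\zeta$ is well-defined as a homeomorphism to begin with. With these observations the proposition reduces to a direct verification.
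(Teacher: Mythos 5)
Your proof is correct and follows essentially the same route as the paper: surjectivity and continuity are immediate from the corresponding properties of $q$, and the intertwining identity $\tilde q\circ\tilde\zeta=\tilde\varphi\circ\tilde q$ is verified by the same direct unfolding of the definitions together with $q\circ\zeta=\varphi\circ q$. Your added remark on the continuity of $z\mapsto h_{q(z)}$ is a harmless extra check that the paper leaves implicit.
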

\begin{proof} Since $q$ is a factor map, it is clear that $\tilde{q}= q \times \id_{W\times Q}$ is surjective. Also,
\begin{eqnarray*} 
\tilde{q} \circ \tilde{\zeta} (z, w) &=& \tilde{q}(\zeta(z), h_{q(z)}(w)) = (q (\zeta(z)), h_{q(z)}(w))\\
&=& (\varphi(q(z)), h_{q(z)}(w)) = \tilde{\varphi}(q(z), h_{q(z)}(w)),
\end{eqnarray*}
so $\tilde{q}$ intertwines the actions. Thus $\tilde{q}$ is a factor map.
\end{proof}

\begin{proposition} \label{MinMapConTwo}
The homeomorphism \[ \tilde{\zeta}: Z\times W\times Q \rightarrow Z\times W\times Q, \qquad (z, w) \mapsto (\zeta(z), h_{q(z)}(w))
\] is minimal. 
\end{proposition}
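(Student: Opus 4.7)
The plan is to reduce minimality of $\tilde{\zeta}$ to the already-established minimality of $\tilde{\varphi}$ via the factor map $\tilde{q} = q \times \id_{W \times Q}$ from Proposition~\ref{FacMapConTwo}. Let $E \subseteq Z \times W \times Q$ be a nonempty closed $\tilde{\zeta}$-invariant subset; I will show $E = Z \times W \times Q$. First, push $E$ down: compactness of $E$ makes $\tilde{q}(E)$ closed, and the intertwining identity $\tilde{q} \circ \tilde{\zeta} = \tilde{\varphi} \circ \tilde{q}$ makes it $\tilde{\varphi}$-invariant. Since $\tilde{\varphi}$ was constructed to be minimal by applying Theorem~\ref{GlasnerWeissToZ} to $S^d \times W \times Q$, this forces $\tilde{q}(E) = S^d \times W \times Q$.

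Next, I would exploit the almost one-to-one nature of $q$. By Theorem~\ref{ThmAboutZ}~(\ref{FactorMap}) together with the construction of $q$ in \cite{DPS:DynZ}, the fibers of $q$ above points of $S^d \setminus L_\infty$ are singletons. Thus for each $(s,w) \in (S^d \setminus L_\infty) \times W \times Q$, the preimage $\tilde{q}^{-1}(s,w)$ consists of a single point $(z_s, w)$, and since $\tilde{q}(E)$ covers $(s,w)$ this point necessarily lies in $E$. Hence
\[
E \;\supseteq\; q^{-1}(S^d \setminus L_\infty) \times W \times Q.
\]

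To finish, one needs that $q^{-1}(S^d \setminus L_\infty)$ is dense in $Z$. This is baked into the construction of $Z$ in \cite{DPS:DynZ}, where $Z$ arises as an inverse limit completion whose ``original'' part $S^d \setminus L_\infty$ embeds densely and is precisely the locus on which $q$ restricts to a homeomorphism. Consequently $q^{-1}(S^d \setminus L_\infty) \times W \times Q$ is dense in $Z \times W \times Q$, and closedness of $E$ forces $E = Z \times W \times Q$, as required. The main obstacle in this plan is the density claim just used: it is the one step where one has to reach past the summary in Theorem~\ref{ThmAboutZ} and invoke specifics of the construction of $(Z,\zeta)$ from \cite{DPS:DynZ}; the remainder is a routine chase around the factor-map diagram of Proposition~\ref{FacMapConTwo}.
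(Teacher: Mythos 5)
Your proof is correct and follows essentially the same route as the paper's: push a closed nonempty invariant set forward via $\tilde{q}$, invoke minimality of $\tilde{\varphi}$, then pull back using that $\tilde{q}$ is one-to-one over $(S^d \setminus L_\infty) \times W \times Q$ and that the corresponding subset of $Z \times W \times Q$ is dense. Your write-up is, if anything, slightly more careful than the paper's in distinguishing $S^d \setminus L_\infty \subset S^d$ from its preimage $q^{-1}(S^d \setminus L_\infty) \subset Z$, and in noting explicitly that surjectivity of $\tilde{q}$ is what forces the singleton fiber to lie in $E$.
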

\begin{proof}
Let $L_{\infty} \subset S^d$ denote the $\varphi$-invariant immersion of $\mathbb{R}$ which is removed in the construction of $Z$. Suppose $F$ is a closed nonempty $\tilde{\zeta}$-invariant subset of $Z\times W\times Q$. Then $\tilde{q}(F)$ is a closed nonempty $\tilde{\varphi}$-invariant subset of $S^d\times W \times Q$ and since $\tilde{\varphi}$ is minimal, we have that $\tilde{q}(F)=S^d\times W \times Q$. By \cite[Lemma 1.14]{DPS:DynZ}, $q$ is injective when restricted to the $(S^d \setminus L_{\infty}) \subset Z$, from which it immediately follows that $\tilde{q}$ is injective on $(S^d \setminus L_{\infty}) \times W \times Q$. Hence ($S^d \setminus L_{\infty}) \times W \times Q \subseteq F$. However $(S^d \setminus L_{\infty}) \times W \times Q$ is dense in $Z\times W\times Q$. Hence $F$ is both closed and dense, so we conclude $F=Z\times W\times Q$, from which it follows that $\tilde{\zeta}$ is minimal.
\end{proof}

\begin{proposition}
The factor map $ \tilde{q} : (Z\times W\times Q, \tilde{\zeta}) \rightarrow (S^d\times W \times Q, \tilde{\varphi})$ induces a one-to-one bijection between the $\tilde{\zeta}$-invariant Borel probability measure on $Z\times W\times Q$ and the $\tilde{\varphi}$-invariant Borel probability measures on $S^d\times W\times Q$. 
\end{proposition}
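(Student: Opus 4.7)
My plan is to reduce this proposition to the analogous measure-theoretic bijection for the base factor map $q : (Z, \zeta) \to (S^d, \varphi)$, which is part of Theorem~\ref{ThmAboutZ}(\ref{FactorMap}). Since $\tilde{q}$ is a factor map by Proposition~\ref{FacMapConTwo}, the push-forward $\tilde{q}_{*}$ automatically sends $\tilde{\zeta}$-invariant Borel probability measures to $\tilde{\varphi}$-invariant ones, so only bijectivity requires an argument. Set $E := q^{-1}(S^d \setminus L_{\infty}) \subseteq Z$. Because $L_{\infty}$ is $\varphi$-invariant and $q \circ \zeta = \varphi \circ q$, the set $E$ is $\zeta$-invariant, and by \cite[Lemma 1.14]{DPS:DynZ} the restriction $q|_{E}$ is a continuous bijection onto $S^d \setminus L_{\infty}$. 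Consequently $\tilde{q}|_{E \times W \times Q}$ is a continuous Borel bijection onto $(S^d \setminus L_{\infty}) \times W \times Q$ whose inverse is Borel measurable by the Lusin--Souslin theorem.

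For injectivity, I will take $\tilde{\mu}_1, \tilde{\mu}_2$ invariant with $\tilde{q}_{*} \tilde{\mu}_1 = \tilde{q}_{*} \tilde{\mu}_2$ and observe that the projection $\pi_{Z} : Z \times W \times Q \to Z$ intertwines $\tilde{\zeta}$ with $\zeta$, so each $(\pi_{Z})_{*} \tilde{\mu}_{i}$ is a $\zeta$-invariant probability on $Z$ and hence, by Theorem~\ref{ThmAboutZ}(\ref{FactorMap}), is concentrated on $E$. Thus each $\tilde{\mu}_{i}$ is concentrated on $E \times W \times Q$, where $\tilde{q}$ is a Borel isomorphism; push-forward by a Borel isomorphism is injective on measures supported in its domain, so $\tilde{\mu}_1 = \tilde{\mu}_2$.

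For surjectivity, given a $\tilde{\varphi}$-invariant probability $\tilde{\nu}$, the same argument with $\pi_{S^d}$ in place of $\pi_{Z}$ shows that $\tilde{\nu}$ is concentrated on $(S^d \setminus L_{\infty}) \times W \times Q$. I then define $\tilde{\mu}$ to be the push-forward of $\tilde{\nu}$ under the Borel inverse of $\tilde{q}|_{E \times W \times Q}$; by construction $\tilde{q}_{*} \tilde{\mu} = \tilde{\nu}$. To verify $\tilde{\zeta}$-invariance of $\tilde{\mu}$, I will use that $E \times W \times Q$ is $\tilde{\zeta}$-invariant and that on this set the intertwining relation $\tilde{q} \circ \tilde{\zeta} = \tilde{\varphi} \circ \tilde{q}$ may be rewritten as $\tilde{q} \circ \tilde{\zeta}^{-1} = \tilde{\varphi}^{-1} \circ \tilde{q}$. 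For any Borel set $C$, combining these with $\tilde{\varphi}$-invariance of $\tilde{\nu}$ collapses $\tilde{\zeta}_{*} \tilde{\mu}(C)$ to $\tilde{\mu}(C)$ in a short computation.

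The only delicate point I foresee is the measurability of the inverse of $\tilde{q}|_{E \times W \times Q}$ and the verification that the full-measure sets on the two sides match up correctly; once the Lusin--Souslin step is in place, the remainder is a formal consequence of Theorem~\ref{ThmAboutZ}(\ref{FactorMap}) transported fibrewise. This works cleanly because both skew products are driven by the same cocycle $s \mapsto h_{s}$, pulled back through $q$ on the $Z$-side, so the measure-theoretic rigidity of $q$ carries over directly to $\tilde{q}$ with no need to disintegrate the fibre measures on $W \times Q$.
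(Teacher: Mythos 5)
Your proposal is correct and follows essentially the same route as the paper: both arguments reduce to the fact that every invariant measure concentrates on $(S^d\setminus L_\infty)\times W\times Q$ (resp.\ its preimage), where $\tilde q$ is bijective, and then transport measures across that bijection. Your use of the coordinate projections onto $Z$ and $S^d$ to import the concentration statement from Theorem~\ref{ThmAboutZ}(\ref{FactorMap}), and your explicit Lusin--Souslin step for measurability of the inverse, are just more detailed versions of what the paper leaves implicit in ``the result follows.''
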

\begin{proof}
Let $\mu$ be a $\tilde{\zeta}$-invariant measure. Then ${\tilde{q}}^*(\mu) =\mu\circ \tilde{q}^{-1}$ is $\tilde{\varphi}$-invariant. The set $L_{\infty} \times Q\times W$ is a Borel subset of $S^d \times W \times Q$, and since $L^{\infty}$ is $\varphi$-invariant, $L_{\infty} \times Q\times W$ is invariant under $\tilde{\varphi}$. As in the proof of \cite[Theorem 1.18]{DPS:DynZ}, it follows that $\tilde{q}^*(\mu)(L_{\infty} \times Q\times W)=0$. Hence $\mu((S^d \setminus L_{\infty}) \times W \times Q) = 1$. Since the factor map $q: Z \to S^d$ is one-to-one on $S^d \setminus L_{\infty}$, we have that $\tilde{q}$ is bijective on $(S^d \setminus L_{\infty}) \times W \times Q$, and the result follows.
\end{proof}

\subsection{Minimal homeomorphisms of nonhomogeneous spaces} \label{NonHomSec}

Many of the known examples of spaces admitting minimal homeomorphisms---such as the Cantor set, the odd-dimensional spheres, or the spaces of the previous section---are homogeneous spaces. Recall that a topological space $X$ is \emph{homogeneous} if, for every pair of points $x, y \in X$ there exists a homeomorphism $h : X \to X$ such that $h(x) = y$.  By contrast, the closed unit interval is nonhomogeneous: there cannot exist a homeomorphism of $[0,1]$ which takes $0$ to any point in the interior $(0,1)$, for example. In particular, any homeomorphism will be either an increasing map leaving the endpoints fixed, or a decreasing map interchanging $0$ and $1$. In any case, it is clear that there can exist no minimal homeomorphism of the unit interval. 

Indeed, it would appear at first that the existence of a minimal homeomorphism on a space implies some type of weak homogeneity. Nevertheless, Floyd constructed  a minimal system, 
$(\tilde{K}, \tilde{\varphi})$ with a factor map $\pi: (\tilde{K}, \tilde{\varphi}) 
\rightarrow (K, \varphi)$, where $(K, \varphi)$ is the $3^{\infty}$-odometer, a minimal dynamical system of the Cantor set $K$ \cite{floyd1949} by constructing a map on $K \times [0,1]$ whose minimal set is $\tilde{K}$.  For some 
$x \in K$, $\pi^{-1}\{ x \}$ is a single point, while for other 
$x \in K$, $\pi^{-1}\{ x \} \cong [0,1]$. Thus $\tilde{K}$ has some maximal connected
sets which are single points and hence zero dimensional, while other maximal connected sets have positive dimension. In particular, $\tilde{K}$ is nonhomogeneous.

This result was subsequently generalized by Gjerde and Johansen \cite{FloGjeJohSys}, who showed that given any Cantor minimal system $(K, \varphi)$, one can find a minimal dynamical system $(\tilde{K}, \tilde{\varphi})$ on a nonhomogeneous space $\tilde{K}$ as well as a factor map $\pi$ from $(\tilde{K}, \tilde{\varphi})$ onto $(K, \varphi)$, again by constructing $\tilde{K}$ as the minimal set of a homeomorphism on $K \otimes [0,1]$. They were able to allow for Cantor systems more general than the $3$-odometer by employing the use of Bratteli--Vershik models.

In \cite{DPS:nonHom}, the authors further generalize the construction of Floyd by not only replacing the Cantor $3$-odometer with more general minimal Cantor systems, but also by replacing the interval $[0,1]$ could be replaced by more complicated spaces, including spaces with arbitrarily large dimension. In particular, we have the following, which in Subsection~\ref{Subsect:RR0} will allow us to find minimal equivalence relations by breaking an orbit at any arbitrary finite dimensional, compact, connected metric space $Y$. We refer the reader to \cite{DPS:nonHom} for the details on this construction and in particular the proof of the next theorem.

\begin{theorem} \label{FloGjeJohSysThm}
Let $(K, \varphi)$ be a minimal system with $K$ the Cantor set and let $n \geq 1$ be a natural number. Then there exists 
 minimal system, 
$(\tilde{K}, \tilde{\varphi})$ with a factor map $\pi: (\tilde{K}, \tilde{\varphi}) 
\rightarrow (K, \varphi)$ such that, for some points $x$ in $K$, $\pi^{-1}\{ x \} \cong [0,1]^{n}$.
Moreover, the map $\pi$ induces an isomorphism $\pi^{*}:K^{*}(K) \rightarrow K^{*}(\tilde{K})$.
\end{theorem}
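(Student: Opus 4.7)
The plan is to extend the Floyd--Gjerde--Johansen construction by replacing the interval $[0,1]$ with the cube $[0,1]^{n}$ and allowing the base to be an arbitrary Cantor minimal system rather than only the $3^{\infty}$-odometer. First I would fix a Bratteli--Vershik model for $(K,\varphi)$, which expresses $K$ as an inverse limit of finite level sets and $\varphi$ as a lexicographic Vershik map. This presentation supplies a canonical orbit, say the orbit of a distinguished infinite path $x_{0}$, along which to perform the blow-up, as well as a tower of refining clopen partitions that will govern the gluing of the cubes at successive approximation stages.

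Next I would construct $\tilde{K}$ as an inverse limit of compact metric spaces $\tilde{K}_{N}$. At stage $N$, the base approximant $K_{N}$ is the finite quotient of $K$ coming from the Bratteli--Vershik tower; $\tilde{K}_{N}$ is obtained from $K_{N}$ by splitting each clopen set containing a point of the chosen orbit along a coordinate-wise subdivision of $[0,1]^{n}$, so that in the limit the fiber of $\pi$ over $\varphi^{k}(x_{0})$ is a full copy of $[0,1]^{n}$ while every other fiber is a singleton. Off the blown-up orbit, $\tilde{\varphi}$ agrees with $\varphi$ via $\pi$; between consecutive cubes $C_{k} \to C_{k+1}$ it acts by a prescribed homeomorphism (for instance the identity of $[0,1]^{n}$), with compatibility with the Bratteli partitions ensuring continuity at the interface.

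The hardest step, I expect, will be verifying minimality. A closed nonempty $\tilde{\varphi}$-invariant subset $F$ projects under $\pi$ onto a closed nonempty $\varphi$-invariant subset of $K$, hence onto all of $K$; combined with the fact that $\pi$ is a bijection over the dense subset $K \setminus \{ \varphi^{k}(x_{0}) \}_{k \in \mathbb{Z}}$, this forces $F$ to contain a dense subset of $\tilde{K}$. The subtle point is to show density within the cubes themselves, which cannot be reduced to a one-dimensional argument and requires the careful choice of the cube-to-cube maps together with the density of the chosen orbit in $K$, so that the forward and backward translates of any point of any single cube accumulate on every fiber.

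Finally, the $K$-theory statement should follow from the contractibility of $[0,1]^{n}$. At each finite stage $N$, the map $\tilde{K}_{N} \to K_{N}$ collapses finitely many closed cubes to points and is therefore a homotopy equivalence, so the induced map on $K$-theory is an isomorphism. Since both $K$ and $\tilde{K}$ are compact metric inverse limits and topological $K$-theory is continuous for such limits, passing to the inverse limit yields the desired isomorphism $\pi^{*} : K^{*}(K) \to K^{*}(\tilde{K})$.
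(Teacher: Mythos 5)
The paper does not actually prove this theorem: it is imported from the companion paper \cite{DPS:nonHom}, where $\tilde{K}$ is obtained as the minimal set of an explicitly constructed homeomorphism of $K\times[0,1]^{n}$. There one fixes a Bratteli--Vershik (Kakutani--Rokhlin) tower structure for $(K,\varphi)$ together with a compatible decreasing sequence of finite unions of ``boxes'' $U\times C$ ($U$ clopen in $K$, $C$ a sub-cell of the cube), and $\tilde{K}$ is the intersection, with $\pi$ the first-coordinate projection. Your inverse-limit-of-blow-ups picture is a reasonable repackaging of this, and your $K$-theory argument (collapsing the contractible fibres at each finite stage is a homotopy equivalence, then invoke continuity of $K$-theory under inverse limits) is in the same spirit as the paper's treatment of such limits and is fine once the construction is in place.

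The genuine gap is exactly at the point you flag: minimality. Your argument handles density of orbits in the singleton-fibre part of $\tilde{K}$, but density of an orbit \emph{inside} the cubes is the entire content of the construction, and you neither specify the data that achieves it nor give an argument. Worse, your suggested default --- the identity of $[0,1]^{n}$ as the cube-to-cube map --- gives no mechanism at all: if $p$ lies in the cube $C_{0}=\pi^{-1}\{x_{0}\}$, then $\pi(\tilde{\varphi}^{k}(p))=\varphi^{k}(x_{0})$ for every $k$, so the orbit of $p$ never meets a singleton fibre; it meets each cube $C_{k}$ in exactly one point, and since $\mathrm{diam}(C_{k})\to 0$ as $|k|\to\infty$ (forced by upper semicontinuity of fibre diameter when only one orbit is blown up), the cluster set of this orbit inside $C_{0}$ is determined entirely by where the small cubes $C_{k}$ sit near $C_{0}$ --- that is, by the finite-stage gluing data you leave unspecified. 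The mechanism in Floyd \cite{floyd1949}, Gjerde--Johansen \cite{FloGjeJohSys} and \cite{DPS:nonHom} is precisely this data: at each stage every box $U\times C$ is refined into boxes $U_{j}\times C_{j}$ with the $C_{j}$ a subdivision of $C$ and the $U_{j}$ distributed through the levels of the Kakutani--Rokhlin towers so that the return map to the tower base cycles through all sub-cells; iterating over all stages forces every orbit to be $\varepsilon$-dense in every fibre for every $\varepsilon>0$. Until you supply such a combinatorial scheme (and verify continuity of $\tilde{\varphi}$ where the cube diameters shrink to zero), the proposal is a plan rather than a proof.
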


\section{Mean dimension} \label{Sect:MD}

In this section, we consider the issue of mean dimension for our minimal systems of the last section. As discussed in Section~\ref{Sect:Prelim}, we are  interested in this result because if a minimal dynamical system $(X, \varphi)$ has mean dimension zero, then the associated crossed product 
 $C(X) \rtimes_{\varphi} \mathbb{Z}$, as well as any orbit-breaking subalgebra,  is classifiable.

The point-like systems $(Z, \zeta)$ all have mean dimension zero because the spaces under consideration, $Z$, all have finite covering dimension. More specifically, if the system $(Z, \zeta)$ is an extension of $ \varphi: S^{d} \rightarrow S^{d}$,  then $Z$ has covering dimension either $d$ or $d-1$ (although we do not know which).  Similarly, suppose $(\tilde{K}, \tilde{\varphi})$ is a system constructed from a Cantor minimal system $(K, \varphi)$ and natural number $n$ using Theorem \ref{FloGjeJohSysThm}. Then $\dim(\tilde{K}) = n$, so $(\tilde{K}, \tilde{\varphi})$ will also have mean dimension zero. 

This leaves us to consider the  minimal skew products 
constructed via Theorem \ref{GlasnerWeissToZ}.
We show that generically they have mean dimension zero. The reader should note that 
if the original system is uniquely ergodic then generically the skew product system
 is also uniquely ergodic, see \cite[Theorem 2]{GlaWei:MinSkePro} and Example \ref{uniqueErgodic}.

We begin by fixing notation and developing some background material. Let $\mathcal{U}$ be a finite cover of an infinite compact metric space $X$. Then the \emph{order} of the cover $\mathcal{U}$, denoted ${\rm ord}(\mathcal{U})$, is given by
\[
{\rm ord}(\mathcal{U}) :=  \left( \max_{x\in X} \sum_{U \in \mathcal{U}} \chi_U(x) \right) -1,
\]
where $\chi_U$ is the characteristic function of the open set $U$. Another finite cover $\mathcal{V}$ is called a \emph{refinement} of $\mathcal{U}$ if each element in $\mathcal{V}$ is a subset of an element of $\mathcal{U}$. In that case, we write $\mathcal{V} \succ \mathcal{U}$. If $\mathcal{U}$ is a finite open cover, we define
\[
\mathcal{D}(\mathcal{U}) :=\min_{\mathcal{V} \succ \mathcal{U}} {\rm ord}(\mathcal{V}),
\]
where the minimum is taken over finite open refinements of $\mathcal{U}$.

Now suppose $\alpha \in {\rm Homeo}(X)$ and $n\in \N$. Then we define
\[
\mathcal{D}(\mathcal{U}, \alpha, n):=\mathcal{D}(\mathcal{U}\vee \alpha^{-1}(\mathcal{U}) \vee \cdots \vee \alpha^{-n+1}(\mathcal{U})),
\]
where $\vee$ denotes the join of the covers $\mathcal{U}$, $\alpha^{-1}(\mathcal{U}), \dots, \alpha^{-n+1}(\mathcal{U})$. Since $\mathcal{D}(\cdot)$ is subadditive with respect to joins of covers, 
\[ \lim_{n\rightarrow \infty} \frac{\mathcal{D}(\mathcal{U},\alpha, n)}{n} = \inf_{n\ge 1}  \frac{\mathcal{D}(\mathcal{U}, \alpha, n)}{n}\]
 exists. Define
\begin{equation}
\mathcal{D}(\mathcal{U}, \alpha):= \lim_{n\rightarrow \infty} \frac{\mathcal{D}(\mathcal{U}, \alpha, n)}{n}.
\end{equation}
The mean dimension of $(X, \alpha)$, denoted by ${\rm mdim}(\alpha)$, is then defined by 
\[
{\rm mdim}(\alpha) :=\sup_{\mathcal{U}} \mathcal{D}(\mathcal{U}, \alpha) \subset [0, \infty],
\]
where the supremum is taken over all finite open covers of $X$.
\begin{example}
Let $Y$ be a finite dimensional compact metric space and $\sigma : Y^{\Z} \rightarrow Y^{\Z}$ be the shift map. Then 
\begin{enumerate}
\item ${\rm mdim}(\sigma)={\rm dim}(Y)$;
\item $\sigma$ is the limit of $(\sigma_n)_{n\in \N}$ where for each $n$, ${\rm mdim}(\sigma_n)=0$.
\end{enumerate}
The first property is exactly \cite[Proposition 3.1]{LinWeiss:MTD}. For the second, let $\sigma_n : Y^{\Z} \rightarrow Y^{\Z}$ be defined via
\[
(x_l)_{l\in \N} \mapsto \left\{ \begin{array}{cc} x_l & |l|\ge n \\ x_{l-1} & -n < l < n+1 \\ x_n & l=-n \end{array} \right.
\]
For example, when $n=2$, we have
\[
\sigma_2((x_l)_{l\in \N}) = (\ldots x_{-4}x_{-3}x_2x_{-2}x_{-1}x_0x_{1}x_3x_4\ldots),
\]
where the $x_{-1}$ is in the $0$th position. One checks that
\[d(\sigma, \sigma_n) \rightarrow 0, \qquad n\rightarrow \infty,\]
and also that for each $n$, there exists $N$ such that $\sigma^N_n= \id$. Thus, using \cite[Proposition 2.7]{LinWeiss:MTD}  one has 
\[N{\rm mdim}(\sigma_n)={\rm mdim}(\sigma^N_n)={\rm mdim}(\id)=0.
\]
\end{example}

Now let us return to the systems constructed in Subsection~\ref{minSkewProSec}.  Let $(S^d, \varphi)$ be a diffeomorphism of an odd dimensional sphere, $d \geq 3$, and let $(Z, \zeta)$ be the associated point-like system. Let $Y$ be a compact metric space and set $X := Z \times Y$.

\begin{lemma} \label{mdimLemmaForS}
For any homeomorphism $\alpha \in \mathcal{O}(\zeta\times id) \subset{\rm Homeo}(X)$, we have ${\rm mdim}(\alpha)=0$. 
\end{lemma}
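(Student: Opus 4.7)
The plan is to reduce to showing $\mathrm{mdim}(\zeta \times \id) = 0$ via conjugacy invariance, and then exploit two features of this system: $Z$ has finite covering dimension, and the dynamics is trivial on the $Y$-coordinate. Since any $\alpha \in \mathcal{O}(\zeta \times \id)$ is by definition $\alpha = G^{-1}(\zeta \times \id)G$ for some $G \in \mathrm{Homeo}(X)$, and since $G$ sends finite open covers to finite open covers while preserving orders (hence preserving $\mathcal{D}(\mathcal{U})$ for every $\mathcal{U}$), mean dimension is unchanged under this conjugation. It therefore suffices to work with $\zeta \times \id$ itself.

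Given any finite open cover $\mathcal{U}$ of $Z \times Y$, a standard compactness/Lebesgue-number argument produces finite open covers $\mathcal{V}$ of $Z$ and $\mathcal{W}$ of $Y$ such that the product cover $\mathcal{V} \times \mathcal{W} := \{V \times W : V \in \mathcal{V},\, W \in \mathcal{W}\}$ refines $\mathcal{U}$. Since $\mathcal{D}(\,\cdot\,, \zeta \times \id)$ is monotone under refinement (refining shrinks the family of further refinements over which the order is minimized), it is enough to bound $\mathcal{D}(\mathcal{V} \times \mathcal{W}, \zeta \times \id)$. Using the identity $(\zeta \times \id)^{-k}(V \times W) = \zeta^{-k}(V) \times W$, one computes directly that
\[
\bigvee_{k=0}^{n-1}(\zeta \times \id)^{-k}(\mathcal{V} \times \mathcal{W}) \;=\; \mathcal{V}_n \times \mathcal{W}_n,
\]
where $\mathcal{V}_n := \bigvee_{k=0}^{n-1}\zeta^{-k}\mathcal{V}$ and $\mathcal{W}_n := \bigvee_{k=0}^{n-1}\mathcal{W}$ is just the $n$-fold self-join of $\mathcal{W}$.

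The key step---and the only subtle point, since $Y$ is allowed to be infinite-dimensional (for instance when $Y = W \times Q$ with $Q$ the Hilbert cube)---is to show that $\mathcal{D}(\mathcal{V}_n \times \mathcal{W}_n)$ remains bounded as $n \to \infty$. On the $Z$-side, $\mathcal{V}_n$ is a finite open cover of the finite-dimensional space $Z$, so $\mathcal{D}(\mathcal{V}_n) \le \dim Z$. On the $Y$-side, the distinct members of $\mathcal{W}_n$ are exactly the nonempty intersections $\bigcap_{i \in S} W_i$ over subfamilies $S \subseteq \{1,\dots,|\mathcal{W}|\}$, so $\mathcal{W}_n$ stabilizes once $n \ge |\mathcal{W}|$; in particular $\mathcal{D}(\mathcal{W}_n) \le M$ for some constant $M$ depending only on $\mathcal{W}$. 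Choosing open refinements $\mathcal{V}_n' \succ \mathcal{V}_n$ and $\mathcal{W}_n' \succ \mathcal{W}_n$ attaining these orders, the product $\mathcal{V}_n' \times \mathcal{W}_n'$ is an open refinement of $\mathcal{V}_n \times \mathcal{W}_n$, and a point-counting argument gives $\mathrm{ord}(\mathcal{V}_n' \times \mathcal{W}_n') \le (\dim Z + 1)(M + 1) - 1$, a bound independent of $n$. Dividing by $n$ and passing to the limit yields $\mathcal{D}(\mathcal{V} \times \mathcal{W}, \zeta \times \id) = 0$; taking the supremum over $\mathcal{U}$ gives $\mathrm{mdim}(\zeta \times \id) = 0$, and conjugacy invariance finishes the proof.
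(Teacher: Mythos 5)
Your proof is correct and follows the same outline as the paper's: reduce by conjugacy invariance to $\zeta \times \id$, then exploit that $Z$ is finite-dimensional while the second factor carries trivial dynamics. The only difference is that where the paper simply invokes the product subadditivity ${\rm mdim}(\zeta \times \id) \le {\rm mdim}(\zeta) + {\rm mdim}(\id)$ as a known fact, you prove the needed estimate directly via product covers and the crude order bound ${\rm ord}(\mathcal{V}'\times\mathcal{W}') \le ({\rm ord}(\mathcal{V}')+1)({\rm ord}(\mathcal{W}')+1)-1$, which suffices because any bound independent of $n$ vanishes after dividing by $n$ --- a self-contained and slightly more elementary route.
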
 
\begin{proof}
By definition, $\alpha=G^{-1} \circ (\zeta \times \id) \circ G$ for some $G\in {\rm Homeo}(X)$. Hence $\alpha$ and $\zeta \times \id$ are conjugate, and so it is enough to show that $\zeta \times \id$ has mean dimension zero. Since $Z$ is finite dimensional, $\zeta:Z\rightarrow Z$ has mean dimension zero. The identity map always has mean dimension zero, so 
\[
{\rm mdim}(\zeta\times \id)\leq {\rm mdim}(\zeta) + {\rm mdim}(\id) =0,
\]
hence ${\rm mdim}(\alpha)=0$.
\end{proof}

The proof of the next lemma is standard and is therefore omitted. 
\begin{lemma} \label{closeUpToNthPower}
Suppose $\alpha \in {\rm Homeo}(X)$. Then for each $\epsilon>0$ and $N\in \N$, there exists $\delta>0$ such that if $ d(\alpha,\beta)< \delta$, then $d(\alpha^i, \beta^i)< \epsilon$ for each $i=1, \ldots, N$.
\end{lemma}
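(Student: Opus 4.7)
The plan is to proceed by induction on $i$, exploiting the fact that $X$ is compact so that $\alpha$, $\alpha^{-1}$, and all their iterates are uniformly continuous. Recall the metric on ${\rm Homeo}(X)$ is
\[ d(g,h) = \sup_{x \in X} d_X(g(x),h(x)) + \sup_{x \in X} d_X(g^{-1}(x), h^{-1}(x)), \]
so the hypothesis $d(\alpha,\beta)<\delta$ simultaneously controls $\sup_x d_X(\alpha(x),\beta(x))$ and $\sup_x d_X(\alpha^{-1}(x),\beta^{-1}(x))$, each by $\delta$. It therefore suffices, by symmetry, to control $\sup_x d_X(\alpha^i(x),\beta^i(x))$ for $i=1,\dots,N$; the same argument applied to $\alpha^{-1}$ and $\beta^{-1}$ handles the negative iterates.

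The base case $i=1$ is immediate. For the inductive step, I would use the triangle-inequality decomposition
\[ d_X(\alpha^{i+1}(x), \beta^{i+1}(x)) \le d_X\bigl(\alpha(\alpha^{i}(x)), \alpha(\beta^{i}(x))\bigr) + d_X\bigl(\alpha(\beta^{i}(x)), \beta(\beta^{i}(x))\bigr). \]
The second summand is bounded by $d(\alpha,\beta)<\delta$ uniformly in $x$. The first summand is handled by the uniform continuity of $\alpha$ on the compact space $X$: given any $\eta>0$ there is $\eta'>0$ such that $d_X(y,y')<\eta'$ forces $d_X(\alpha(y),\alpha(y'))<\eta$. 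If the inductive hypothesis supplies $\sup_x d_X(\alpha^i(x),\beta^i(x))<\eta'$, then the first summand is less than $\eta$, so the sum can be made less than any target tolerance.

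To finish, I would fix $\epsilon>0$ and $N\in\N$ and run the recursion backward. Define a decreasing sequence $\epsilon_N := \epsilon/2 > \epsilon_{N-1} > \cdots > \epsilon_1 > 0$ where each $\epsilon_{i}$ is chosen (using uniform continuity of $\alpha$) so that $d_X(y,y')<\epsilon_{i}$ implies $d_X(\alpha(y),\alpha(y'))<\epsilon_{i+1}/2$; the same procedure applied to $\alpha^{-1}$ yields another finite chain of tolerances. Taking $\delta$ smaller than the minimum of all such $\epsilon_i$ and of $\epsilon_i/2$ for each $i$, a straightforward induction gives $d(\alpha^i,\beta^i)<\epsilon$ for every $1 \le i \le N$.

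The argument has no genuine obstacle; the only mild subtlety is to select the $\delta$ uniformly so that it simultaneously governs both the forward and backward iterates up to step $N$, which is harmless since only finitely many applications of uniform continuity are being composed.
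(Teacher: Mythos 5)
Your proof is correct, and it is precisely the standard argument (chaining the triangle inequality with uniform continuity of $\alpha$ and $\alpha^{-1}$ on the compact space $X$) that the paper deliberately omits with the remark that the proof is standard. The only cosmetic wrinkle is that setting $\epsilon_N = \epsilon/2$ for each half of the metric on ${\rm Homeo}(X)$ gives $d(\alpha^i,\beta^i) \le \epsilon$ rather than $< \epsilon$ in the worst case; replacing $\epsilon/2$ by $\epsilon/3$ (or observing that the suprema are attained by compactness, so the strict pointwise bounds persist in the supremum) closes this gap.
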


Fix a countable neighbourhood base for the topology on $X$ and let $\mathcal{U}$ be a finite open cover of $X$ whose elements are basic sets. For each such $\mathcal{U}$ and $k\in \N$, define 
\[
H_{\mathcal{U}, k} := \left\{ \alpha \in \overline{\mathcal{S}(\zeta \times \id)} \: \middle | \: \lim_{n\rightarrow \infty} \frac{\mathcal{D}(\mathcal{U}, \alpha, n)}{n} < \frac{1}{k} \right\}.
\]

\begin{lemma}
 Let $\mathcal{U}$ be a finite open cover of $X$ whose elements are basic sets and let $k \in \mathbb{N}$.  It holds that
\begin{enumerate}
\item the set $H_{\mathcal{U}, k}$ is dense in $\overline{\mathcal{S}(\zeta \times \id)}$;
\item the set $H_{\mathcal{U}, k}$ is open in $\overline{\mathcal{S}(\zeta \times \id)}$;
\item if $\alpha \in \bigcap_{\mathcal{U}, k} H_{\mathcal{U}, k}$, then ${\rm mdim}(\alpha)=0$.
\end{enumerate}
\end{lemma}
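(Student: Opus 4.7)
The plan is to attack the three claims in turn, using Lemma \ref{mdimLemmaForS} for density, a standard perturbation-of-refinements argument for openness, and monotonicity of $\mathcal{D}$ in the cover for the last claim.

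\textbf{Density.} Since $\mathcal{S}(\zeta\times\id)\subseteq\mathcal{O}(\zeta\times\id)$, Lemma \ref{mdimLemmaForS} gives $\mathrm{mdim}(\alpha)=0$ for every $\alpha\in\mathcal{S}(\zeta\times\id)$. In particular $\lim_{n}\mathcal{D}(\mathcal{U},\alpha,n)/n=0<1/k$ for any finite open cover $\mathcal{U}$, so $\mathcal{S}(\zeta\times\id)\subseteq H_{\mathcal{U},k}$. As $\mathcal{S}(\zeta\times\id)$ is dense in its closure, density of $H_{\mathcal{U},k}$ follows.

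\textbf{Openness.} Let $\alpha\in H_{\mathcal{U},k}$. Because the limit equals the infimum, choose $N$ with $\mathcal{D}(\mathcal{U},\alpha,N)/N<1/k$, and pick a finite open refinement $\mathcal{V}=\{V_{1},\dots,V_{m}\}$ of $\mathcal{U}\vee\alpha^{-1}\mathcal{U}\vee\cdots\vee\alpha^{-(N-1)}\mathcal{U}$ whose order equals $\mathcal{D}(\mathcal{U},\alpha,N)$. For each $i$ fix $U_{i,j}\in\mathcal{U}$ with $\alpha^{j}(V_{i})\subseteq U_{i,j}$ for $0\le j\le N-1$. Shrink $\mathcal{V}$ to a finite open cover $\mathcal{V}'=\{V'_{1},\dots,V'_{m}\}$ with $\overline{V'_{i}}\subseteq V_{i}$ (possible by compactness of $X$ and finiteness of $\mathcal{V}$); then $\overline{\alpha^{j}(V'_{i})}\subseteq U_{i,j}$. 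For each $(i,j)$ compactness of $\overline{\alpha^{j}(V'_{i})}$ and openness of $U_{i,j}$ yield some $\epsilon_{i,j}>0$ such that the $\epsilon_{i,j}$-neighbourhood of $\overline{\alpha^{j}(V'_{i})}$ lies in $U_{i,j}$. Setting $\epsilon=\min_{i,j}\epsilon_{i,j}$, Lemma \ref{closeUpToNthPower} supplies $\delta>0$ so that $d(\alpha,\beta)<\delta$ implies $\sup_{x}d(\alpha^{j}(x),\beta^{j}(x))<\epsilon$ for $0\le j\le N-1$, and consequently $\beta^{j}(V'_{i})\subseteq U_{i,j}$ for all $i,j$. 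Thus $\mathcal{V}'$ refines $\bigvee_{j=0}^{N-1}\beta^{-j}\mathcal{U}$ and has order at most $\mathrm{ord}(\mathcal{V})$, giving $\mathcal{D}(\mathcal{U},\beta,N)\le\mathcal{D}(\mathcal{U},\alpha,N)$, hence $\lim_{n}\mathcal{D}(\mathcal{U},\beta,n)/n\le\mathcal{D}(\mathcal{U},\beta,N)/N<1/k$ and $\beta\in H_{\mathcal{U},k}$. The main obstacle here is arranging the refinement argument uniformly across all finite time steps, which is exactly what the shrinking step together with Lemma \ref{closeUpToNthPower} accomplishes.

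\textbf{Zero mean dimension.} Let $\alpha\in\bigcap_{\mathcal{U},k}H_{\mathcal{U},k}$ and let $\mathcal{W}$ be an arbitrary finite open cover of $X$. Using the countable neighbourhood base, find a finite open cover $\mathcal{U}$ of $X$ whose elements are basic sets and refine $\mathcal{W}$, i.e.\ $\mathcal{U}\succ\mathcal{W}$. Then $\alpha^{-j}\mathcal{U}\succ\alpha^{-j}\mathcal{W}$ for each $j$, so any refinement of $\bigvee_{j=0}^{n-1}\alpha^{-j}\mathcal{U}$ is a refinement of $\bigvee_{j=0}^{n-1}\alpha^{-j}\mathcal{W}$, giving $\mathcal{D}(\mathcal{W},\alpha,n)\le\mathcal{D}(\mathcal{U},\alpha,n)$ for every $n$ and hence $\mathcal{D}(\mathcal{W},\alpha)\le\mathcal{D}(\mathcal{U},\alpha)<1/k$ for every $k$. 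Therefore $\mathcal{D}(\mathcal{W},\alpha)=0$ for every finite open cover $\mathcal{W}$, i.e.\ $\mathrm{mdim}(\alpha)=0$. Combined with parts (1) and (2) and the Baire category theorem applied to the complete metric space $\overline{\mathcal{S}(\zeta\times\id)}$, this will yield the residual set of mean-dimension-zero skew products promised in the section.
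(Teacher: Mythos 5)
Your proof is correct and follows essentially the same strategy as the paper's: density via Lemma \ref{mdimLemmaForS}, openness by fixing $N$ with $\mathcal{D}(\mathcal{U},\alpha,N)/N<1/k$, building a refinement with a positive ``buffer'' from each $U_{i_r}$, and invoking Lemma \ref{closeUpToNthPower}, and part (3) by monotonicity of $\mathcal{D}$ under refinement of covers. The only (harmless) variation is in the openness step, where the paper obtains compactness by passing to a finite \emph{closed} refinement realizing $\mathcal{D}(\mathcal{U},\alpha,N)$ (Coornaert, Proposition 1.6.5), whereas you work with an optimal open refinement and shrink it to $\mathcal{V}'$ with $\overline{V_i'}\subseteq V_i$; both devices serve the same purpose and your argument is complete.
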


\begin{proof}
Fix $\mathcal{U}$ and $k$. If $\alpha \in \mathcal{S}(\zeta \times id)$ (rather than its closure) then $\alpha$ has mean dimension zero by Lemma \ref{mdimLemmaForS} and hence, for each $\mathcal{U}$, 
\[ \lim_{n\rightarrow \infty} \frac{\mathcal{D}(\mathcal{U}, \alpha, n)}{n} =0< \frac{1}{k}. \]
Thus $\mathcal{S}(\zeta \times \id) \subseteq H_{\mathcal{U}, k}$ and the first statement follows.

For the second statement, again fix $\mathcal{U}=\{ U_1, \ldots, U_m\}$ and $k$ and now also fix a homeomorphism $\alpha \in H_{\mathcal{U}, k}$. Then by definition, 
\[ \lim_{n\rightarrow \infty} \frac{\mathcal{D}(\mathcal{U}, \alpha, n)}{n}=L < \frac{1}{k} .\]
Therefore, there exists $N\in \N$ such that
\[
\frac{\mathcal{D}(\mathcal{U}, \alpha, N)}{N} < \frac{1}{k}.
\]
By \cite[Proposition 1.6.5]{Coornaert}, there exists a finite closed cover of $X$, call it $\mathcal{F}$, that refines $\mathcal{U} \vee \alpha^{-1}(\mathcal{U}) \vee \ldots \vee \alpha^{-N+1}(\mathcal{U})$ and $\mathcal{D}(\mathcal{U}, \alpha, N)={\rm ord}(\mathcal{F})$. 

Fix a closed set $F \in \mathcal{F}$. Then 
\begin{equation} \label{closedSetFasSubset}
F \subseteq U_{i_1} \cap \alpha^{-1}(U_{i_2}) \cap \ldots \cap \alpha^{-N+1}(U_{i_N}),
\end{equation}
where each $U_{i_r}$ is an open set in $\mathcal{U}$. For each $1\leq r \leq N$ we have $F \subseteq \alpha^{-r}(U_{i_r})$. Since $\alpha$ is a homeomorphism, $\alpha^{r}(F) \subseteq U_{i_r}$ and hence the closed sets $\alpha^r(F)$ and $X \setminus U_{i_r}$ are disjoint. The metric space $X$ is compact so both $\alpha^r(F)$ and $X \setminus U_{i_r}$ are compact. Therefore the distance between $\alpha^r(F)$ and $X \setminus U_{i_r}$ is equal to some $\epsilon_{F, r}>0$.

The collection $\mathcal{F}$ is finite so 
\[
\epsilon := {\rm min}\left\{ \frac{\epsilon_{F, r}}{2} \: \middle | \: F \in \mathcal{F} \hbox{ and } 1\le r \le N \right\}
\] 
exists and is a positive real number. By Lemma \ref{closeUpToNthPower}, there exists $\delta>0$ such that if $d(\alpha, \beta)< \delta$, then $d(\alpha^i, \beta^i)< \epsilon$ for each $i=1, \ldots, N$.

Now suppose $\beta \in \overline{\mathcal{S}(\zeta \times id)}$ with $d(\alpha, \beta)<\delta$. We show $\beta \in H_{\mathcal{U}, k}$. The first step is showing $\mathcal{F}$ refines $\mathcal{U} \vee \beta^{-1}(\mathcal{U}) \vee \ldots \vee \beta^{-N+1}(\mathcal{U})$. Fix $F\in \mathcal{F}$. By \eqref{closedSetFasSubset} we have $F \subseteq \alpha^{-r}(U_{i_r})$. Our goal is to show that $F\subseteq \beta^{-r}(U_{i_r})$ for each $r=1, \ldots, m$. Let $x \in F$. The definition of $\epsilon$ implies that $d(\alpha^r(y), X \setminus U_{i_r})\geq 2\epsilon$. Using this along with the fact that $d(\alpha^r, \beta^r)< \epsilon$, we have
\begin{align*}
2\epsilon & \leq d(\alpha^r(y), X\setminus U_{i_r}) \\
& \leq d(\alpha^r(y), \beta^r(y)) + d(\beta^r(y), X \setminus U_{i_r}) \\
& \leq \epsilon +  d(\beta^r(y), X \setminus U_{i_r})
\end{align*}
Thus $d(\beta^r(y), X \setminus U_{i_r})\geq \epsilon>0$. Hence $\beta^r(y)\notin X \setminus U_{i_r}$, from which it follows that $y \in \beta^{-r}(U_{i_r})$. Since $y$ and $r$ were arbitrary, $F \subseteq \beta^{-r}(U_{i_r})$ for each $r=1, \ldots, N$. Thus $\mathcal{F}$ refines $\mathcal{U} \vee \beta^{-1}(\mathcal{U}) \vee \ldots \vee \beta^{-N+1}(\mathcal{U})$. Using \cite[Proposition 1.6.5]{Coornaert}, one obtains
\[
\mathcal{D}(\mathcal{U}, \beta, N) \leq {\rm ord}(\mathcal{F}) = \mathcal{D}(\mathcal{U}, \alpha, N),
\]
and therefore 
\[
\frac{\mathcal{D}(\mathcal{U}, \beta, N)}{N} \leq \frac{\mathcal{D}(\mathcal{U}, \alpha, N)}{N} < \frac{1}{k}.
\]
Finally, 
\[\lim_{n \rightarrow \infty} \frac{\mathcal{D}(\mathcal{U}, \beta, n)}{n}=\inf_{n\geq 1} \frac{\mathcal{D}(\mathcal{U}, \beta, n)}{n} \leq \frac{\mathcal{D}(\mathcal{U}, \beta, N)}{N}< \frac{1}{k}.
\]
Thus $\beta \in H_{\mathcal{U}, k}$ which implies that $H_{\mathcal{U}, k}$ is open.

We now prove the final item in the statement. That is, if $\alpha \in \bigcap_{\mathcal{U}, k} H_{\mathcal{U}, k}$, then ${\rm mdim}(\alpha)=0$. Let $\mathcal{U}$ be any finite open cover of $X$. Then, using the definition of neighbourhood base, there exists $\mathcal{V}$ a finite open cover by basic sets that refines $\mathcal{U}$. Since $\mathcal{V} \succ \mathcal{U}$, for each $n$, 
\[ \mathcal{V} \vee \alpha^{-1}(\mathcal{V}) \vee \ldots \vee \alpha^{-n+1}(\mathcal{V}) \succ \mathcal{U} \vee \alpha^{-1}(\mathcal{U}) \vee \ldots \vee \alpha^{-n+1}(\mathcal{U}). 
\]
Proposition 1.1.4 of \cite{Coornaert} implies that $\mathcal{D}(\mathcal{V}, \alpha, n) \geq \mathcal{D}(\mathcal{U}, \alpha, n)$ for each $n$. Since $\alpha \in H_{\mathcal{V}, k}$ for each $k \in \N$, 
\[
\lim_{n\rightarrow \infty} \frac{\mathcal{D}(\mathcal{U}, \alpha, n)}{n} \leq \lim_{n\rightarrow \infty} \frac{\mathcal{D}(\mathcal{V}, \alpha, n)}{n} =0.
\]
The choice of $\mathcal{U}$ was arbitrary, so ${\rm mdim}(\alpha)=\sup_{\mathcal{U}} \lim_{n\rightarrow \infty} \frac{\mathcal{D}(\mathcal{U}, \alpha, n)}{n} =0$.
\end{proof}

 Now the main theorem of this section follows from the previous lemma, the proof of \cite[Theorem 1]{GlaWei:MinSkePro}, and the Baire Category Theorem.
\begin{theorem}
There is a residual subset of $\overline{\mathcal{S}(\zeta \times id)}$ consisting of homeomorphisms that are minimal and have mean dimension zero. The same results holds for $\overline{\mathcal{O}(\zeta \times id)}$.
\end{theorem}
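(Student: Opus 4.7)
The plan is to combine three ingredients: Theorem~\ref{GlasnerWeissToZ} (minimality is generic), the preceding lemma (mean-dimension-zero is generic), and the Baire Category Theorem.

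First I would note that $\overline{\mathcal{S}(\zeta \times \id)}$ is a closed subset of the complete metric space $({\rm Homeo}(X), d)$ and is therefore itself a Baire space. Since $X$ is compact and metrizable, a countable neighbourhood base exists, so the collection of finite open covers $\mathcal{U}$ of $X$ whose elements are drawn from this base is countable; indexed together with $k \in \mathbb{N}$, the family $\{H_{\mathcal{U}, k}\}$ is countable. By the previous lemma each $H_{\mathcal{U}, k}$ is open and dense in $\overline{\mathcal{S}(\zeta \times \id)}$, and any $\alpha$ lying in $\bigcap_{\mathcal{U}, k} H_{\mathcal{U}, k}$ has mean dimension zero. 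Baire's theorem then yields that $\bigcap_{\mathcal{U}, k} H_{\mathcal{U}, k}$ is residual in $\overline{\mathcal{S}(\zeta \times \id)}$.

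Next I would invoke Theorem~\ref{GlasnerWeissToZ} to produce a second residual subset of $\overline{\mathcal{S}(\zeta \times \id)}$ consisting of minimal homeomorphisms. In a Baire space the intersection of two residual sets is residual, so intersecting the two gives a residual subset of minimal, mean-dimension-zero homeomorphisms, establishing the first assertion.

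For $\overline{\mathcal{O}(\zeta \times \id)}$, the argument is identical: one redefines $H_{\mathcal{U}, k}$ replacing $\overline{\mathcal{S}(\zeta \times \id)}$ by $\overline{\mathcal{O}(\zeta \times \id)}$. Density follows because Lemma~\ref{mdimLemmaForS} applies to \emph{all} of $\mathcal{O}(\zeta \times \id)$, not just $\mathcal{S}(\zeta \times \id)$, so the dense subset of mean-dimension-zero homeomorphisms needed in the first step of the previous lemma's proof is still available. The openness argument uses only the metric structure on ${\rm Homeo}(X)$ and Lemma~\ref{closeUpToNthPower}, so it transfers verbatim. Combining with the $\overline{\mathcal{O}}$-version of Theorem~\ref{GlasnerWeissToZ} gives the second assertion. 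The real content of the theorem lies in the openness part of the previous lemma; here the work is mostly bookkeeping and an appeal to Baire category.
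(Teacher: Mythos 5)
Your proposal is correct and takes the same route the paper indicates in its one-line proof (Baire category applied to the countable family $\{H_{\mathcal{U},k}\}$, intersected with the residual set of minimal homeomorphisms from Theorem~\ref{GlasnerWeissToZ}); you have simply supplied the bookkeeping the authors left implicit, including the observation that the lemma's density and openness arguments transfer to $\overline{\mathcal{O}(\zeta\times\id)}$.
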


\section{$K$-theory and orbit-breaking} \label{Sect:K}

In this section, we establish 
some further results on the $K$-theory of the orbit-breaking subalgebras which will be needed. The definition of orbit-breaking subalgebras was discussed in Section~\ref{Sect:Prelim}. The first lemma and its corollary follow from an inspection of the maps defined in Theorem 2.4 (and Example 2.6) of \cite{Put:K-theoryGroupoids}. 

\begin{lemma} \label{orbitBreakFuncProp}
Let $(X, \varphi)$ be a minimal dynamical system and $Y_1$, $Y_2$ nonempty closed subsets meeting every $\varphi$-orbit at most once. Suppose that $Y_1 \subset Y_2$ and let $j : Y_1 \to Y_2$ denote the inclusion. Let $A := C(X) \rtimes_{\varphi} \mathbb{Z}$. Then $\iota_i : A_{Y_i} \into A$, $i = 1, 2$ and the two exact sequences of Theorem~\ref{PutExtSeq} are compatible. That is, 
\begin{displaymath} 
\xymatrix{ \cdots \ar[r] & K_1(A) \ar@{=}[d] \ar[r] & K^0(Y_2) \ar[d]_j \ar[r] & K_0(A_{Y_2}) \ar[d] \ar[r]^-{(\iota_2)_*} & K_0(A) \ar[r] \ar@{=}[d]  & \cdots\\
\cdots \ar[r] & K_1(A) \ar[r] & K^0(Y_1) \ar[r] & K_0(A_{Y_1}) \ar[r]^-{(\iota_1)_*} & K_0(A) \ar[r] & \cdots .}
\end{displaymath}

\end{lemma}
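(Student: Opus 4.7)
The plan is to realise the commutative ladder as the image under $K$-theory of a morphism between the two exact sequences of Theorem~\ref{PutExtSeq} constructed in \cite[Theorem 2.4 and Example 2.6]{Put:K-theoryGroupoids} for $Y_1$ and $Y_2$. First, I observe that $Y_1\subseteq Y_2$ implies that strictly more pairs $(\varphi^{k}(y),\varphi^{l}(y))$ are excised from $\mathcal{R}_{\varphi}$ in forming $\mathcal{R}_{Y_2}$ than in forming $\mathcal{R}_{Y_1}$; hence
\[
\mathcal{R}_{Y_2}\subseteq \mathcal{R}_{Y_1}\subseteq \mathcal{R}_{\varphi}
\]
as open subequivalence relations. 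Extension by zero of compactly supported functions therefore yields unital $^*$-homomorphisms $\kappa:A_{Y_2}\hookrightarrow A_{Y_1}$ and $\iota_{i}:A_{Y_i}\hookrightarrow A$ fitting into a commuting triangle $\iota_{2}=\iota_{1}\circ\kappa$. The unlabelled vertical arrow $K_{0}(A_{Y_2})\to K_{0}(A_{Y_1})$ in the diagram is taken to be $\kappa_{*}$, and the middle square of the ladder commutes immediately by functoriality of $K_{0}$ applied to this identity.

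For the remaining squares, including those involving the boundary maps, I would inspect the construction of the six-term sequence in Putnam's proof. That sequence arises as the standard $K$-theory six-term sequence associated to a canonical short exact sequence of $\mathrm{C}^{*}$-algebras built from $(X,\varphi,Y)$, in which the map $K^{0}(Y)\to K_{0}(A_{Y})$ is induced by a natural inclusion involving $C(Y)$ and the connecting maps $K_{i}(A)\to K^{i+1}(Y)$ are the usual boundary maps. The inclusion $j:Y_{1}\hookrightarrow Y_{2}$ restricts $C(Y_{2})$ to $C(Y_{1})$ and, together with $\kappa$ on the orbit-breaking factor and the identity on $A$, produces a morphism from the short exact sequence for $Y_{2}$ to the one for $Y_{1}$. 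All the identifications used by Putnam (restriction of functions, extension by zero, multiplication by characteristic functions supported on $Y$) are manifestly natural under containment of closed subsets, so verifying this morphism amounts to a direct check at the level of $C_{c}$-functions on the underlying groupoids.

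Applying the $K$-theory functor to this morphism of short exact sequences, together with naturality of the connecting homomorphism in the $K$-theory six-term sequence, then yields the entire commutative ladder; in particular the induced map $K^{0}(Y_{2})\to K^{0}(Y_{1})$ is $K^{0}(j)=j^{*}$, as claimed. I expect the main obstacle to be the explicit bookkeeping of showing that Putnam's short exact sequence is functorial in $Y$ under inclusions of closed subsets that meet every $\varphi$-orbit at most once; once this functoriality is in place, commutativity of each square of the ladder, and in particular of the squares involving the boundary maps, is forced by the universal property of the $K$-theory long exact sequence.
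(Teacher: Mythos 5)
Your approach is essentially the same as the paper's: the paper's own proof is simply the remark that the lemma ``follows from an inspection of the maps defined in Theorem 2.4 (and Example 2.6) of \cite{Put:K-theoryGroupoids},'' and your proposal does exactly this, correctly identifying the nested inclusions $\mathcal{R}_{Y_2}\subseteq\mathcal{R}_{Y_1}\subseteq\mathcal{R}_{\varphi}$, the resulting $^*$-homomorphism $\kappa\colon A_{Y_2}\hookrightarrow A_{Y_1}$ with $\iota_2=\iota_1\circ\kappa$, and reducing everything to naturality of Putnam's construction under these maps. Two small slips worth noting, neither of which affects the argument: the identity $\iota_2=\iota_1\circ\kappa$ and functoriality of $K_0$ directly give commutativity of the square whose horizontal arrows are $(\iota_i)_*\colon K_0(A_{Y_i})\to K_0(A)$ (the rightmost square displayed), not the middle square, which instead requires checking naturality of the map $K^0(Y)\to K_0(A_Y)$; and Putnam's sequence does not arise from a single short exact sequence of $\mathrm{C}^*$-algebras but rather from the six-term sequence in relative $K$-theory for the pair $A_Y\subset A$ combined with the excision isomorphism $K_*(A_Y;A)\cong K_*(C_Y;C)$ of \cite{Put:Excision}, though each of these ingredients is natural under the inclusions you identify, so the conclusion stands.
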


In the next corollary, (\ref{NCP}) was also observed by Phillips in \cite[Theorem 4.1(3)]{MR2320644}.

\begin{corollary} \label{corOrbitBreakYandPoint}
Applying the previous lemma to the case $Y_1=\{y \}$ for some $y \in X$, $Y_2$  connected and $K_1(A) \cong \Z$, we have 
\begin{displaymath} 
\xymatrix{ \cdots \ar[r] &  \mathbb{Z} \ar@{=}[d] \ar[r] & K^0(Y_2) \cong \mathbb{Z} \oplus \tilde{K}^0(Y_2)  \ar[d]_j \ar[r] & K_0(A_{Y_2}) \ar[d] \ar[r]^-{(\iota_2)_*} & K_0(A) \ar[r] \ar@{=}[d]  & \cdots\\
\cdots \ar[r] & \mathbb{Z} \ar[r] & K^0(\{y\}) \cong \mathbb{Z} \ar[r] & K_0(A_{\{y\}}) \ar[r]^-{(\iota_1)_*} & K_0(A) \ar[r] & \cdots .}
\end{displaymath}

Moreover, 
\begin{enumerate}
\item the map $K_0(A_{\{ y \}}) \rightarrow K_0(A)$ is an isomorphism; \label{NCP}
\item the map $\Z \rightarrow K^0(Y_1)\cong \Z$ is an isomorphism; 
\item the map $K^0(Y_2)  \cong \Z \oplus \tilde{K}^0(Y_2) \rightarrow K^0(\{ pt \}) \cong \Z$ is given by 
\[ (n, y) \in \Z \oplus \tilde{K}^0(Y_2) \mapsto n \in \Z. \]
\end{enumerate}
\end{corollary}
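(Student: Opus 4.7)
The plan is to read off the three claims from the commutative diagram of Lemma~\ref{orbitBreakFuncProp}, combining the two running hypotheses ($Y_2$ connected and $K_1(A)\cong\Z$) with Phillips's observation on single-point orbit breaking.

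First I would dispose of (3), which is a purely topological statement. Since $Y_2$ is connected, the class $[1_{Y_2}]$ of the trivial line bundle generates a free $\Z$-summand of $K^0(Y_2)$, giving the natural splitting $K^0(Y_2)\cong \Z[1_{Y_2}]\oplus \tilde{K}^0(Y_2)$. The vertical map in question is induced by the inclusion $j:\{y\}\hookrightarrow Y_2$, and under the identification $K^0(\{y\})\cong\Z$ it is simply the rank map, sending $[1_{Y_2}]$ to $1$ and killing the reduced summand, i.e.\ $(n,v)\mapsto n$.

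Next I would handle (1) directly by invoking Phillips's theorem \cite[Theorem 4.1(3)]{MR2320644}: for any free minimal $\Z$-action on an infinite compact metric space, breaking the orbit at a single point gives a subalgebra $A_{\{y\}}\subseteq A$ for which the inclusion induces an isomorphism on $K_0$. In our setting no further hypotheses beyond those already present in the corollary are needed, so (1) can be cited wholesale.

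Finally, (2) will fall out as a purely diagrammatic consequence of (1). Since $K^1(\{y\})=0$, the bottom row of the commutative diagram reads
\begin{equation*}
K_1(A) \longrightarrow K^0(\{y\}) \longrightarrow K_0(A_{\{y\}}) \xrightarrow{(\iota_1)_*} K_0(A) \longrightarrow 0.
\end{equation*}
By (1) the map $(\iota_1)_*$ is injective, so exactness forces $K^0(\{y\}) \to K_0(A_{\{y\}})$ to be the zero map, which in turn forces $K_1(A)\to K^0(\{y\})$ to be surjective. Both groups are isomorphic to $\Z$ (the right one canonically, the left by hypothesis), and any surjection $\Z\to\Z$ is automatically an isomorphism, giving (2). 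The only conceptual input is (1); the rest is bookkeeping against Lemma~\ref{orbitBreakFuncProp}, and the main point to verify is merely that the two six-term sequences fit together as asserted so that the hypothesis on $(\iota_1)_*$ can be transported across the diagram.
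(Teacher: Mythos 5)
Your proposal is correct. For comparison: the paper offers essentially no written proof of this corollary --- it asserts that the lemma and its corollary ``follow from an inspection of the maps defined in Theorem 2.4 (and Example 2.6) of \cite{Put:K-theoryGroupoids}'', i.e.\ from the explicit descriptions of the boundary and inclusion maps, and it merely remarks that item (\ref{NCP}) was also observed by Phillips. Your treatment of (3) (the splitting $K^0(Y_2)\cong\Z[1_{Y_2}]\oplus\tilde K^0(Y_2)$ for connected $Y_2$ and the rank map under restriction to a point) is exactly what is intended. Where you genuinely diverge is in the logical ordering of (1) and (2): you take (1) as the external input (Phillips) and deduce (2) purely formally, using $K^1(\{y\})=0$ and exactness at $K_0(A_{\{y\}})$ and at $K^0(\{y\})$, plus the fact that a surjective endomorphism of $\Z$ is an isomorphism. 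The paper's intended route runs the other way: one computes directly that $\partial_{\rm OB}$ sends the class of the canonical unitary to a generator of $K^0(\{y\})\cong\Z$ (this computation is in fact carried out later, in the proof of Lemma~\ref{boundaryMapLemma}, where $\partial_{\rm OB}[u]_1=-[e_1^+]_0$), which gives (2) directly, and then (1) follows from (2) together with the surjectivity of $(\iota_1)_*$ forced by $K^1(\{y\})=0$. Your route is shorter and avoids the explicit boundary-map computation, at the cost of resting (2) on the citation of Phillips rather than making both items self-contained; the direct-inspection route is more robust in that it does not assume $K_1(A)\cong\Z$ to get surjectivity of $\partial_{\rm OB}$, whereas your Hopfian argument does use that hypothesis. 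Both arguments are valid under the stated hypotheses, so there is no gap --- just a different choice of which item to treat as primitive.
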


For the next lemma, recall from Section~\ref{Sect:Prelim} that the boundary maps in the Pimsner--Voiculescu exact sequence is denoted $\partial_{\rm PV}$, and the boundary maps in the exact sequence of Theorem~\ref{PutExtSeq} is denoted $\partial_{\rm OB}$.

\begin{lemma} \label{boundaryMapLemma}
Let $(X, \varphi)$ be a minimal dynamical system with $X$ an infinite compact metric space. Suppose $Y \subseteq X$ is a closed subset of $X$ such that $\varphi^{n}(Y) \cap Y = \emptyset$ for every $n \neq 0$. Let $j : Y \into X$ denote the inclusion of $Y$ in $X$. Then the following diagram commutes:
\begin{displaymath} 
\xymatrix{ K_*(A) \ar[d]_{\partial_{\rm OB}} \ar[r]^{\partial_{\rm PV}} & K^{*+1}(X) \ar[dl]^{j^*} \\
K^{*+1}(Y). }
\end{displaymath}
\end{lemma}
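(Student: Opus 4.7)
The plan is to realize both boundary maps as connecting homomorphisms of comparable short exact sequences of $\mathrm{C}^*$-algebras, so that the desired identity follows from naturality of the boundary map in K-theory.

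First, I would recall that $\partial_{\rm PV}$ is the connecting map associated to the Pimsner--Voiculescu Toeplitz extension
\[ 0 \to C(X) \otimes \K \to \mathcal{T}_{\varphi} \to A \to 0, \]
so that under the canonical identification $K_*(C(X) \otimes \K) \cong K^*(X)$, the resulting connecting map is the Pimsner--Voiculescu boundary. Next, I would unpack the explicit construction of the orbit-breaking six-term sequence of Theorem~\ref{PutExtSeq}, following \cite[Theorem 2.4, Example 2.6]{Put:K-theoryGroupoids}, which similarly arises as the six-term exact sequence of K-theory applied to a short exact sequence of \'{e}tale groupoid $\mathrm{C}^*$-algebras of the form
\[ 0 \to C(Y) \otimes \K \to B \to A \to 0, \]
where $B$ is the appropriate mapping-cone/Toeplitz-type algebra built from $\mathcal{R}_Y$.

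The heart of the argument is to construct a morphism of these two short exact sequences,
\[ \xymatrix{
0 \ar[r] & C(X) \otimes \K \ar[d]^-{j^* \otimes \id} \ar[r] & \mathcal{T}_{\varphi} \ar[d] \ar[r] & A \ar@{=}[d] \ar[r] & 0 \\
0 \ar[r] & C(Y) \otimes \K \ar[r] & B \ar[r] & A \ar[r] & 0,
} \]
in which the identity appears on the common quotient $A$ and the restriction homomorphism $j^*: C(X) \to C(Y)$ appears on the ideals. Given such a diagram, naturality of the connecting map in the K-theory six-term exact sequence immediately yields $\partial_{\rm OB} = j^* \circ \partial_{\rm PV}$, which is precisely the claimed commutativity.

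The main obstacle lies in constructing the middle vertical arrow $\mathcal{T}_{\varphi} \to B$ and verifying the compatibility squares. This requires realizing both Toeplitz-type algebras as $\mathrm{C}^*$-algebras of compatible \'{e}tale groupoids over $X$ and $Y$, so that restriction along $j:Y \into X$ is reflected at the level of groupoids; the hypothesis that $Y$ meets each $\varphi$-orbit at most once is precisely what should make the groupoid-level restriction well-defined. A more computational alternative is to evaluate both boundary maps on an explicit set of generators of $K_*(A)$ (for instance $[u] \in K_1(A)$ for the canonical unitary and classes pulled back from $K^*(X)$ via $C(X) \into A$), using the concrete formulas for $\partial_{\rm PV}$ and the description of $\partial_{\rm OB}$ in \cite{Put:K-theoryGroupoids}, and then checking the identity term by term; this bypasses the need to produce a morphism of extensions but sacrifices the conceptual symmetry.
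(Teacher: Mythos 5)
Your preferred strategy has a genuine gap at exactly the point you yourself flag as ``the main obstacle'': the existence of an extension $0 \to C(Y)\otimes\K \to B \to A \to 0$ whose index map \emph{is} $\partial_{\rm OB}$, together with a middle arrow $\mathcal{T}_{\varphi}\to B$ covering $\id_A$ and $j^*\otimes\id$. This cannot be deferred as a verification, because the orbit-breaking sequence of Theorem~\ref{PutExtSeq} is not constructed as the six-term sequence of any such extension. It is built from the relative groups $K_*(A_Y;A)$ of the inclusion $A_Y\subseteq A$, identified by Putnam's excision theorem with $K_*(C_Y;C)$ for the elementary pair $C_Y=C_Y^+\oplus C_Y^-\subseteq C\cong C(Y)\otimes\K(\ell^2(\Z))$, and $\partial_{\rm OB}$ is the resulting composite $K_*(A)\to K_{*+1}(A_Y;A)\cong K_{*+1}(C_Y;C)\to K_{*+1}(C(Y))$. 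Even if you exhibit some extension with ideal $C(Y)\otimes\K$ and quotient $A$, you still owe a proof that its index map coincides with this composite; that identification is where all the analytic content lives, and it is precisely what has to be checked by hand (it amounts to comparing the Toeplitz lift of a unitary with the partial isometry $v_m$ representing its relative class). Naturality of the boundary map is the easy part; the proof \emph{is} the construction you have left out.

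Your fallback computational route is the one that actually works, but as stated it also would not close. Evaluating both maps on $[u]_1$ and on classes pulled back via $C(X)\into A$ is not enough: the latter lie in $\ker\partial_{\rm PV}$ (and in $\ker\partial_{\rm OB}$) by exactness of the respective sequences, so they carry no information, and $[u]_1$ together with the image of $K_*(C(X))$ need not generate $K_*(A)$ --- for instance $\partial_{\rm PV}\colon K_0(A)\to K^1(X)$ surjects onto a generally nonzero group whose preimages are invisible to your proposed generators. The correct version of this route treats an arbitrary unitary in $M_n(\tilde{A})$, approximates it by a finite sum $v=\sum_{n=-N}^{N}f_nu^n$, reduces to $u^Nv$, and then compares the explicit Toeplitz lift $w=\sum f_nu^n\otimes b^n$ with $(u^Nv)_m$ through a concrete manipulation of the projections $p$, $d_m$, $e_m$. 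None of that computation appears in the proposal, so neither route, as written, establishes the lemma.
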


 Before beginning the proof of Lemma \ref{boundaryMapLemma}, we need a more explicit description of the boundary maps. The boundary map $\partial_{\rm\, PV}$ of the Pimsner--Voiculescu exact sequence is discussed first.
   
Regard the crossed product as being generated by functions in $C(X)$ and a unitary $u$ such that $ufu^{*} = f \circ \varphi^{-1}$, for $f$ in $C(X)$. Let $b \in \mathcal{B}(\ell^{2}(\mathbb{Z}))$ denote the bilateral shift and let $\mathcal{E} \subset (C(X) \times_{\varphi } \mathbb{Z}) \otimes \mathcal{B}(\ell^{2}(\mathbb{Z}))$ be the $\mathrm{C}^*$-subalgebra generated $C(X) \otimes 1$ and $u \otimes b$.  Denote by $p^+ \in \mathcal{B}(\ell^{2}(\mathbb{Z}))$ the projection onto $\ell(\mathbb{Z}_{>0})$ and put $p := 1_{C(X)} \otimes p^+$. There is a short exact sequence
     \[
         0 \rightarrow C(X) \otimes \mathcal{K}(\ell^{2}(\Z_{>0})) 
  \rightarrow p\mathcal{E}p \rightarrow C(X) \times_{\varphi} \Z \rightarrow 0,
  \]       
  and the map $\partial_{\rm\, PV}$ is the index map in the associated sequence on $K$-theory.

  To describe the second boundary map $\partial_{\text{OB}}$ we elaborate a little on the $K$-theory of the orbit-breaking 
  subalgebras. Let $A := C^{*}(\mathcal{R}_{\alpha})$ and $A_{Y} := C^{*}(\mathcal{R}_{Y})$.

   Let $\mathbb{Z}$ act on 
   $Y \times \mathbb{Z}$, 
   trivially on $Y$ and by translation on $\mathbb{Z}$, and let $\mathcal{S}_{\varphi} $ denote its associated groupoid. We view $\mathcal{S}_\varphi$ as the equivalence relation
   \[
   \mathcal{S}_{\varphi} = 
   \{ \left( (y,m), (y, n) \right) \mid y \in Y, m,n \in \mathbb{Z} \}.
   \]
Define
   \begin{eqnarray*}
   \mathcal{S}_{Y}  & =  &  \mathcal{S} \setminus
   \{ \left( (y,m), (y, n) \right)  \mid y \in Y; m < 1 \leq n 
    \text{ or } n < 1 \leq m 
     \},
   \end{eqnarray*}
and let $C := C^{*}(\mathcal{S}_{\alpha})$ and $C_{Y} := C^{*}(\mathcal{S}_{Y})$ be the associated $\mathrm{C}^*$-algebras.  Observe that by defining
    \begin{eqnarray*}
   \mathcal{S}_{Y}^{+}  & :=  &
   \{ \left( (y,m), (y, n) \right) \mid y \in Y, m,n \geq 1\}, \\
   C_{Y}^{+} & = &  C^{*}\left(\mathcal{S}_{Y}^{+}  \right), \\
    \mathcal{S}_{Y}^{-}  & :=  &
   \{ \left( (y,m), (y, n) \right) \mid y \in Y, m,n < 1\}, \\
   C_{Y}^{-} & := &  C^{*}\left(\mathcal{S}_{Y}^{-}  \right),
   \end{eqnarray*}
  we have  $\mathcal{S}_{Y} = \mathcal{S}_{Y}^{+} \cup \mathcal{S}_{Y}^{-}$ and $C_{Y} =    C_{Y}^{+}  \oplus C_{Y}^{-}$. It is straightforward to check that (see also \cite[Example 2.6]{Put:K-theoryGroupoids}) that for the associated $\mathrm{C}^*$-algebras we have
   \begin{eqnarray*}
   C & \cong & C(Y) \otimes \mathcal{K}\left(\ell^{2}(\mathbb{Z})\right), \\
   C_{Y}^{+} & \cong & C(Y) \otimes \mathcal{K}\left(\ell^{2}(\mathbb{Z}_{>0})\right), \\
   C_{Y}^{-} & \cong & C(Y) \otimes \mathcal{K}\left(\ell^{2}(\mathbb{Z}_{\leq 0})\right).
   \end{eqnarray*}
Note that the inclusion $C_{Y} \subset C$ is considerably more elementary than that of $A_{Y} \subset A$. We want to establish a relation between these two pairs of inclusions.  The map $j: Y \times \mathbb{Z} \rightarrow X$  defined by
   $j(y,n) = \varphi^{n}(y)$ is continuous and  equivariant. By abuse of notation, we will also use $j$ to denote $ j \times j$, which is a  groupoid  morphism from $\mathcal{S}_{\varphi} $ to $\mathcal{R}_{\varphi}$. Observe that $j(\mathcal{S}_{Y} )$ is precisely $j(\mathcal{S}_{\varphi}) \cap \mathcal{R}_{Y}$.
    
    If $g \in C_{c}(\mathcal{R}_{\varphi})$, then $g \circ j: \mathcal{S}_{\varphi} \rightarrow \mathbb{C}$ is continuous and although it does not have compact support, we nevertheless have that for any $f \in C_{c}(\mathcal{S}_{\varphi})$ the convolution products $f  (g \circ j), (g \circ j) f$ are both defined and contained in $C_{c}( \mathcal{S}_{\varphi})$. Thus elements of $A$ act as multipliers on $C$. Similarly, elements of $A_{Y}$ acts as multipliers on $C_{Y}$.  

The main result of \cite{Put:Excision} is that there is an isomorphism between relative $K$-groups, $K_{*}(A_{Y}; A) \cong K_{*}(C_{Y};C)$. Thus we can reduce the computation of $K_{*}(A_{Y}; A)$ to the computation of $K_{*}(C_{Y};C)$. Now, given any $\mathrm{C}^*$-algebra $B$ with $\mathrm{C}^*$-subalgebra $B'$, by \cite[Section 2]{Put:Excision}, the relative $K_0$-group,  $K_0(B', B)$ is shown to consist of  elements  represented by partial isometries in matrices over $\tilde{A}$ whose initial and final projections lie in matrices over $\tilde{A'}$. There is an exact sequence
\begin{displaymath}
\xymatrix{ K_1(B) \ar[r] & K_0(B';B) \ar[r] &K_0(B') \ar[d]  \\
 \ar[u] K_1(B') & \ar[l]  K_1(B';B) & \ar[l] K_0(B) ,
}
\end{displaymath}
given as follows. The map $K_1(B) \to K_0(B';B)$ is defined by considering any unitary in matrices over $\tilde{B}$ as a partial isometry with initial and final projections in matrices over $\tilde{A'}$, and the map $K_0(B'; B) \to K_0(B')$ is defined by sending a partial isometry $v$ to  $[v^{*}v]_{0} - [vv^{*}]_{0} \in K_{0}(B')$. The vertical maps are induced by the inclusion $B'\subset B$.

With the concrete description of the $\mathrm{C}^*$-algebras $C_Y \subset C$, it is a simple matter to check that this yields a short exact sequence
\[
0 \rightarrow K_{0}(C_{Y}; C) \rightarrow K_{0}(C_{Y}^{+})  \oplus K_{0}(  C_{Y}^{-})  \rightarrow
K_{0}(C)  \rightarrow 0,
\]
and that the inclusions $C_{Y}^{+}, C_{Y}^{-} \subset C$ both induce isomorphisms on $K$-theory. Consequently, if $q:= 1 \oplus 0$ in the multiplier algebra of $C_{Y}^{+} \oplus C_{Y}^{-}$,  the map from $K_{0}(C_{Y};C)$ to $K_{0}(C)$ 
taking the class of a partial isometry $v$ in $C$ with initial and final projections in $C_{Y}^{+} \oplus C_{Y}^{-}$ to $[v^{*}vq]_{0} - [vv^{*}q]_{0}$, is an isomorphism.
    
  We now complete our description of the map $\partial_{\text{OB}}$. For each $m \geq 1$, let $e_{m}$ be the characteristic function of the compact open set $\{ (y, i, i) \mid |i| \leq m \} \subset \mathcal{S}_{\alpha}$.  These elements form an approximate unit for $C$ which lies in $C_{Y}$. We use the same $e_{m}$ to denote $e_{m} \otimes 1_{n}$ in $M_{n}(C)$.

Let $v \in M_n(A)$ be a unitary, which we regard as a partial isometry with initial and range projections in $M_{n}(A')$. Define
  \[
  v_{m} := \left[ \begin{array}{cc} ve_{m} & 0 \\ 1_{n}-e_{m} & 0 \end{array} \right],
  \]
  which is a partial isometry in $M_{2n}( \tilde{C} )$ satisfying  
  \[v_{m}^{*}v_{m} = 1 \oplus 0, \qquad v_{m}v_{m}^{*} =  ve_{m}v^{*} \oplus (1-e_{m} ).\]
    For sufficiently large values of $m$, $v_{m}v_{m}^{*}$
     will lie (at least approximately) in $M_{2n}(\tilde{C_{Y}})$. The isomorphism 
     from \cite{Put:Excision} carries the class of $v$ to that of $v_{m}$, for any sufficiently large $m$. Thus for the map $\partial_{OB}$ we have 
     \[ [v]_{1} \mapsto [q(1 \oplus 0 )]_{0} - [ q(ve_{m}v^{*} \oplus (1-e_{m} ))]_{0} \in K_{0}(C).\]
    
\noindent\emph{Proof of Lemma \ref{boundaryMapLemma}}. It will be convenient to let $e^{+}_{m}$ denote
  the characteristic function of the set $\{ (y, i, i) \mid 1 \leq i \leq m\}$. In other words, 
  $e^{+}_{m} = pe_{m}$. First, we show that $j^*\circ\partial_{\text{PV}}([u]_1) = \partial_{\text{OB}}([u]_1)$ where $u$ is the canonical unitary in the crossed product. We compute the Pimsner--Voiculescu map as follows. Lift $u$ to
$p(u \otimes b)p = u \otimes s$ in $p\mathcal{E}p$, where $s$ is the unilateral shift.
Then
\[
  \partial_{\text{PV}}([u]_{1}) = [1 \otimes ss^{*}]_{0} - [ 1 \otimes s^{*}s]_{0} = - [1 \otimes (I-ss^{*})]_{0}. 
  \]
  Under the isomorphism of $K_{0}(C(X) \otimes \mathcal{K})$ with $K_{0}(C(X))$, this is identified with
  $- [1]_{0}$. On the other hand,  
  \begin{eqnarray*}
  \partial_{\text{OB}}[u]_{1} & =  & [q(1 \oplus 0)]_{0} - [ q(ue_{m}u^{*} \oplus (1-e_{m} ))]_{0} \\
      &  =  &  
  [q]_{0} -  [ e^{+}_{m+1} \oplus (q-e^{+}_{m} ))]_{0} \\
    & = & -[e_{1}^{+}]_{0}.
  \end{eqnarray*}
  Under the isomorphism between $K_{0}(C)$ and $K_{0}(C(Y))$, 
 $[e_{1}^{+}]_{0}$ is identified with
  $[1]_{0}$. As the restriction map from $C(X)$ to $C(Y)$ is unital, we have $j^*\circ\partial_{\text{PV}}([u]_1)) = \partial_{\text{OB}}([u]_1)$, as required.
    
Now we show that $j^*\circ\partial_{\text{PV}} = \partial_{\text{OB}}$ when applied to an arbitrary unitary in  $M_{n}(\tilde{A})$. For simplicity, assume $n=1$. We may approximate this unitary
  by an invertible $v = \sum_{-N}^{N} f_{n}u^{n}$, where $N \in \mathbb{N}$ and $f_{n} \in C(X)$,  $-N \leq n \leq N$.
  As we know the conclusion holds for $u$, it suffices to prove it holds for
  $u^{N}v = \sum_{0}^{2N} f_{n-N}u^{n}$.
  
  For $m > 2N$, the initial and final projections of $(u^{N}v)_{m}$ are in $\tilde{C_{Y}}$
  and  we have
  \[
  \partial_{\text{OB}}[u^{N}v] = [qu^{N}ve_{m}v^{*}u^{N}]_{0} - [e_{m}^{+}]_{0}.
  \]
  On the other hand, we may lift the element $u^{N}v = \sum_{n=0}^{2N} f_{n-N}u^{n}$ to the element $w = \sum_{n=0}^{2N} f_{n}u^{n} \otimes B^{n}$ in $\mathcal{E}$. From the fact that $p(u \otimes b)p=(u \otimes b)p$, it follows that  $pwp=wp$ and is a partial isometry (approximately) with $(wp)^{*}(wp) \approx p$ and  $(wp)(wp)^{*} = wpw^{*} = pwpw^{*}p$, which is a subprojection of $p$. It follows that 
 \[
 \partial_{\text{PV}}[u^{N}v]_{1} = -[p - wpw^{*}]_{0}.
 \]
  
  We also know that 
   \[
   p(u \otimes b)^{*}p = p(u \otimes b)^{*}
   \]
   and hence, provided $m > 1$, 
   \[
   pw(u \otimes b)^{*m}p = pw(u \otimes b)^{*m}.
   \]
If we let $d_{m} = 1 \otimes p_{\ell^{2}\{-m, \ldots, m \}}$ where $ p_{\ell^{2}\{-m, \ldots, m \}}$ is the projection
   onto \newline $\ell^{2}\{-m, \ldots, m \}$, we have 
  \[
   (1-p)d_{m} =   (1-p)(u \otimes b)^{*(m+1)}p (u \otimes b)^{m+1}.
  \]
It is clear that  $wp(1-d_{m})w^{*}$
  is a subprojection of $wpw^{*}$ and hence also of $p$. It follows that 
  \[
 [p-wpw^{*}]_{0} = [p- wp(1-d_{m})w^{*}]_{0} - [wpw^{*}- wp(1-d_{m})w^{*}]_{0}
  \]
  
  For the second term, we have 
 \[
 [wpw^{*}- wp(1-d_{m})w^{*}]_{0} = [wpd_{m}w^{*}]_{0} = [pd_{m}]_{0}.
  \]
  and for the first, we have
 
   \begin{eqnarray*}
   p - wp(1-d_{m})w^{*} & = & p - wpw^{*} + wpd_{m}w^{*} \\
                  & = &  p - pwpw^{*}p + wpd_{m}w^{*} \\
                   & = &  pw(1-p)w^{*}p + wpd_{m}w^{*} \\
                     &  =  & 
            pw(u \otimes b)^{*(m+1)} (u \otimes b)^{m+1} (1-p)w^{*}p \\
   &  &            +   wpd_{m}w^{*} \\ 
        &  =  &  
          pvw(u \otimes b)^{*(m+1)}p (u \otimes b)^{m+1} (1-p)w^{*}p  \\
   &  &       + wpd_{m}w^{*}  \\
     &  =  &  
          pwd_{m} (1-p)w^{*}p 
     + wpd_{m}w^{*}  \\
     & = & pwe_{m}w^{*}p.
            \end{eqnarray*}

  Applying the restriction map from $C(X)$ to $C(Y)$ takes $p$ to $q$ and 
  $d_{m}$ to $e_{m}$, and we are done.
 \qed

\section{$\mathrm{C}^{*}$-algebras associated to skew products systems} \label{Sect:Skew}

Let $(X, \alpha)$ be a minimal dynamical system as constructed by Theorem \ref{GlasnerWeissToZ}. By Proposition~\ref{alphaActZeroKtheory},  the homeomorphism $\alpha$ induces the identity on $K$-theory, so the Pimsner--Voiculescu exact sequence is given by
\begin{displaymath} 
\xymatrix{
K^0(X) \ar[r]^-{0} & K^0(X) \ar[r] &  K_0(C(X) \rtimes_{\alpha} \Z)  \ar[d]^{\partial_{\rm PV}}\\
 K_1(C(X) \rtimes_{\varphi} \Z) \ar[u]^{\partial_{\rm PV}} &  K^1(X) \ar[l] & K^1(X) \ar[l]_-{0},}
\end{displaymath}
and hence reduces to the  two short exact sequences
\begin{align*}
0 \rightarrow K^0(X) \rightarrow K_0(C(X) \rtimes_{\alpha} \Z) \rightarrow K^1(X) \rightarrow 0, 
\end{align*}
and
\begin{align*}
0 \rightarrow K^1(X) \rightarrow K_1(C(X) \rtimes_{\alpha} \Z) \rightarrow K^0(X) \rightarrow 0.
\end{align*}
In general we do not know if these short exact sequences split (compare with \cite[Proposition 4.2 and Remark 4.3]{Suk:ConMinSys}). However, there exist minimal skew products where these sequences do indeed split, and in these cases $K$-theory computations are possible.

As before, let $W$ be a finite connected CW-complex and let $Q$ be the Hilbert cube. As in Subsection~\ref{minSkewProSec}, we apply \cite[Theorem 1]{GlaWei:MinSkePro} to $S^d\times W \times Q$ to obtain a minimal homeomorphism 
\[
\tilde{\varphi} : S^d\times W \times Q \rightarrow S^d\times W \times Q, \qquad
(s, w) \mapsto (\varphi(s), h_s (w)),
\]
where $s \in S^d$, $w\in W\times Q$ and $h : S^d \rightarrow {\rm Homeo}(W\times Q)$. Let  $(Z\times W\times Q, \,  \tilde{\zeta})$ be the corresponding minimal system constructed as an extension of $(S^d\times W \times Q, \, \tilde{\varphi})$ as in Proposition~\ref{MinMapConTwo}.

\begin{proposition} \label{KtheoryCrossed}
Let $X :=  Z\times W\times Q$. Then for the minimal dynamical system $(X, \tilde{\zeta})$, we have 
\begin{align*}
K_0(C(X) \rtimes_{\tilde{\zeta}} \Z) & \cong K_1(C(X) \rtimes_{\tilde{\zeta}}\Z ) \cong K^0(X)\oplus K^1(X).
\end{align*}
\end{proposition}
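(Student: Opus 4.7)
The plan is to show that $\tilde{\zeta}^{*}$ acts trivially on $K^{*}(X)$, so that the Pimsner--Voiculescu sequence reduces to two short exact sequences, and then to split both extensions.

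First I would verify that $\tilde{\zeta}^{*} = \mathrm{id}$ on $K^{*}(X)$. By the K\"unneth formula together with $K^{*}(Z) \cong K^{*}(Q) \cong K^{*}(\mathrm{pt})$ (from Theorem~\ref{ThmAboutZ} and contractibility of $Q$), the projection $X \to W$ induces an isomorphism $K^{*}(X) \cong K^{*}(W)$. Writing $\tilde{\zeta}(z,y) = (\zeta(z), h_{q(z)}(y))$ with $y \in W \times Q$, each $h_{q(z)}$ lies in the path-connected group $\mathcal{G} \subseteq \mathrm{Homeo}(W \times Q)$ of Theorem~\ref{GlasnerWeissToZ}. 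Since $Z$ is connected, the map $z \mapsto h_{q(z)}$ is homotopic through $\mathcal{G}$ to a constant map at $\mathrm{id}_{W \times Q}$; combined with the fact that $\zeta$ acts as the identity on $K^{*}(Z) \cong \mathbb{Z}$, this gives $\tilde{\zeta}^{*} = \mathrm{id}$ on $K^{*}(X)$, exactly as in Proposition~\ref{alphaActZeroKtheory}.

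With this vanishing, the Pimsner--Voiculescu sequence yields the two short exact sequences displayed after the statement of the proposition. To obtain the stated direct sum decomposition it remains to split both. For the extension
\[ 0 \to K^{1}(X) \to K_{1}(A) \to K^{0}(X) \to 0, \]
where $A := C(X) \rtimes_{\tilde{\zeta}} \mathbb{Z}$, I would define $s \colon K^{0}(X) \to K_{1}(A)$ on classes of projections by $s([p]_{0}) = [pu + (1-p)]_{1}$, extending to all of $K^{0}(X)$ by formal differences and matrix stabilization. The block identity $(p \oplus q)u + (1 - p - q) = \mathrm{diag}(pu + 1 - p,\, qu + 1 - q)$ for orthogonal projections $p,q$ shows $s$ is additive, hence a homomorphism. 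Lifting $pu + (1-p)$ to $p(u \otimes s) + (1-p) \in p \mathcal{E} p$ in the Toeplitz algebra used to define $\partial_{\mathrm{PV}}$, and performing the same index computation as in the first half of the proof of Lemma~\ref{boundaryMapLemma}, one finds $\partial_{\mathrm{PV}}([pu + (1-p)]_{1}) = [p]_{0}$; thus $s$ is a section.

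An analogous construction is needed to split the other sequence: given a unitary $v \in U_{n}(C(X))$, one must produce a projection in $M_{k}(A)$ whose image under the exponential Pimsner--Voiculescu boundary is $[v]_{1} \in K^{1}(X)$. The main obstacle is this second splitting, since the exponential boundary is harder to invert explicitly than the index boundary handled above. One clean alternative is to argue KK-theoretically: the Kasparov product with the canonical class $[u] \in KK^{1}(\mathbb{C}, A)$, combined with the KK-class of the inclusion $C(X) \hookrightarrow A$, gives a natural map $K^{*}(X) \oplus K^{*-1}(X) \to K_{*}(A)$ whose behaviour on the PV filtration forces it to be an isomorphism once $\tilde{\zeta}^{*} = \mathrm{id}$. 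In either case, the two splittings together yield the desired isomorphism $K_{0}(A) \cong K_{1}(A) \cong K^{0}(X) \oplus K^{1}(X)$.
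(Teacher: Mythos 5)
The paper takes a different route and, unfortunately, your proposal has several genuine gaps.

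The paper does not split the Pimsner--Voiculescu sequences by constructing explicit sections. Instead it shows that $\tilde{\zeta}$ is \emph{homotopic} (through homeomorphisms) to a conjugate $G^{-1}\circ(\zeta\times\id_{W\times Q})\circ G$ of the product homeomorphism, using Chapman's local contractibility of $\mathrm{Homeo}$ of a compact Hilbert cube manifold together with the structure of the Glasner--Weiss construction, and then invokes \cite[Proposition 10.5.1]{Bla:K-theory}. For $\zeta\times\id$ the splitting is automatic, and homotopy invariance carries it over to $\tilde{\zeta}$. This matters because, as the paper itself points out just before the proposition (citing Suzuki), the short exact sequences do \emph{not} split in general just because $\tilde{\zeta}^{*}=\id$; the proof has to produce something extra, and the homotopy to a conjugate of $\zeta\times\id$ is precisely that extra input.

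Concretely, your argument has three problems. First, the claim that ``since $Z$ is connected, $z\mapsto h_{q(z)}$ is homotopic through $\mathcal{G}$ to a constant'' is not a valid deduction: path-connectedness of the target does not make maps from a connected space null-homotopic (compare $\id\colon S^{1}\to S^{1}$). Something like Chapman's theorem is genuinely needed here, and even then what the construction delivers is a homotopy of $h$ to a coboundary $s\mapsto g^{-1}_{\varphi(s)}g_{s}$, not to a constant. Second, your proposed section $s([p]_{0})=[pu+(1-p)]_{1}$ is not well formed: for $p\in C(X)$ one has $(pu+1-p)^{*}(pu+1-p)=u^{*}pu+(1-p)=p\circ\varphi+(1-p)\neq 1$ unless $p$ is $\varphi$-invariant, so $pu+1-p$ is not a unitary and the class $[pu+1-p]_{1}$ does not make sense as written. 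One can repair this by completing the partial isometry $p(u\otimes 1_{n})$ to a genuine unitary using the fact that $\varphi^{*}[p]=[p]$, but that repair has to be carried out, and then well-definedness and additivity on $K^{0}(X)$ need to be verified. Third, you acknowledge not handling the other extension; the closing KK remark that the map is ``forced to be an isomorphism once $\tilde{\zeta}^{*}=\id$'' is exactly the step that fails in general and is what the paper's homotopy argument is designed to circumvent. Until both splittings are produced by a correct argument, the proposition is not proved.
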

\begin{proof} 
The result will follow by showing the short exact sequences obtain from the Pimsner--Voiculescu exact sequence split. In turn, by \cite[Proposition 10.5.1]{Bla:K-theory}, this will follow by showing that $\tilde{\zeta}$ is homotopic to a conjugate of $\zeta\times \id_W \times \id_Q$. The space $S^d\times W \times Q$ is a compact Hilbert cube manifold and hence its homeomorphism group is locally contractible by the main result of \cite{Cha:HomHibCubMfd}. Using this fact together with the skew product construction, there exists a homotopy
\[ H : S^d \times [0,1] \rightarrow {\rm Homeo}(W\times Q), \]
where 
\[ H(s, 0)=h_s \hbox{ and } H(s, 1) = g^{-1}_{\varphi(s)} \circ g_s, \]
for some $g: S^d \rightarrow {\rm Homeo}(W\times Q)$. Precomposing with the factor map $\tilde{q}$ leads to
\[ \tilde{H}: Z\times  [0,1] \rightarrow {\rm Homeo}(W\times Q), \]
where 
\[ \tilde{H}(z, 0)=h_{q(s)} \hbox{ and } \tilde{H}(s, 1) = g^{-1}_{\varphi(q(z))} \circ g_{q(z)} .\]
Summarizing, we have obtained a homotopy from $\tilde{\zeta}$ to the homeomorphism
\[
(z, w) \mapsto (\zeta(z), g^{-1}_{\varphi(q(z))} \circ g_{q(z)}(w) )= (G^{-1} \circ (\zeta \times id_W \times id_Q) \circ G)(z,w),
\]
where the homeomorphism $G$ is determined by $g$ via
\[
G(z,w)=(z, g_{q(z)}(w)).
\]
The result follows.
\end{proof}

Although the $K$-theory of the crossed product algebra is not the same as the space $W$, the orbit-breaking construction allows us to obtain a minimal equivalence relation whose $\mathrm{C}^*$-algebra has the same $K$-theory as the space $W$. We note that this is the case both for the minimal homeomorphisms constructed in Theorem \ref{GlasnerWeissToZ} and Proposition~\ref{MinMapConTwo} but we will concentrate on the former.

Let $A := C(X) \times_{\alpha} \mathbb{Z}$ where $X :=Z\times W\times Q$ and $\alpha$ is minimal and in $\overline{\mathcal{S}(\zeta \times id)}$. Then for any $z_0 \in Z$, the set $\tilde{W}=\{ z_0 \} \times W\times Q$ is a closed subset of $X$. Let $A_{\tilde{W}}$ denoted the orbit-breaking subalgebra associated to $\tilde{W}$, that is, $A_{\tilde{W}} = \mathrm{C}^*(\mathcal{R}_{\tilde{W}})$ where $\mathcal{R}_{\tilde{W}}$ is the minimal equivalence relation defined in Section~\ref{Sect:Prelim}.

\begin{theorem} \label{OBK}
The $\mathrm{C}^*$-algebra $A_{\tilde{W}}$ is simple, separable, unital and nuclear. Moreover, if $A := C(X) \times_{\alpha} \mathbb{Z}$ is $\mathcal{Z}$-stable, then so is $A_{\tilde{W}}$. In particular, this is the case when $(X, \alpha)$ has mean dimension zero.
\end{theorem}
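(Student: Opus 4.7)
The plan is to follow the template of the proof of Theorem~\ref{classifiableTheorem} almost verbatim, the only genuinely new ingredient being that the subset $\tilde{W}$ must meet each $\alpha$-orbit at most once so that all of the orbit-breaking machinery is available. To verify this, observe that every homeomorphism in $\mathcal{S}(\zeta \times \id)$ is of the form
\[
(z,y) \mapsto \bigl(\zeta(z),\, g_{\zeta(z)}^{-1}(g_z(y))\bigr),
\]
so in particular factors over $(Z,\zeta)$ via the first-coordinate projection $\pi : X \to Z$. This property is preserved under uniform limits, so any $\alpha \in \overline{\mathcal{S}(\zeta \times \id)}$ still satisfies $\pi \circ \alpha = \zeta \circ \pi$. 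Since $Z$ is infinite and $\zeta$ is minimal, $\zeta$ is a free $\mathbb{Z}$-action, and projecting a hypothetical intersection point of $\alpha^n(\tilde{W})$ and $\tilde{W}$ to $Z$ gives $\zeta^n(z_0) = z_0$, which forces $n=0$. Hence $\tilde{W}$ meets every $\alpha$-orbit at most once.

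Given this, the four basic properties are immediate from the discussion in Section~\ref{Sect:Prelim}. Simplicity of $A_{\tilde{W}}$ follows from Proposition~\ref{equDense}, which asserts that $\mathcal{R}_{\tilde{W}}$ is a minimal equivalence relation. Separability and unitality are clear, since $X$ is a compact metric space and $A_{\tilde{W}} \supseteq C(X)$ contains the unit. For nuclearity, $\mathcal{R}_{\tilde{W}}$ is an open sub-equivalence relation of the amenable groupoid $\mathcal{R}_\alpha$, so it is amenable by \cite[Proposition 5.1.1]{MR1799683}, and then $A_{\tilde{W}} = C^*(\mathcal{R}_{\tilde{W}})$ is nuclear by \cite[Corollary 6.2.14]{MR1799683}.

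For the $\mathcal{Z}$-stability statement, I would invoke the same argument used at the end of the proof of Theorem~\ref{classifiableTheorem}: the orbit-breaking subalgebra $A_{\tilde{W}}$ sits inside $A$ as a centrally large subalgebra in the sense of Phillips (see \cite[Section 4]{ArcPhi2016}), and Archey, Buck and Phillips show in \cite[Theorem 2.4]{ArBkPh-Z} that centrally large subalgebras of $\mathcal{Z}$-stable $\mathrm{C}^*$-algebras are themselves $\mathcal{Z}$-stable. The final sentence then follows by combining this with the result of Elliott and Niu \cite{EllNiu:MeanDimZero}, which guarantees that mean dimension zero implies $\mathcal{Z}$-stability of $A$.

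The only step that requires any thought is the orbit-counting observation at the beginning; everything else is a direct application of results already assembled in the preliminaries, and in fact the proof is essentially a rerun of Theorem~\ref{classifiableTheorem} once one knows that $\tilde{W}$ meets each orbit at most once.
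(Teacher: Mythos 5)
Your proof is correct and follows the same route as the paper. The paper's proof is a one-liner: it asserts that $\tilde{W}$ meets each $\alpha$-orbit at most once because $\alpha(z,w,q)=(\zeta(z),h_z(w,q))$ and $\zeta$ is minimal, and then refers everything else to (the proof of) Theorem~\ref{classifiableTheorem}; you have simply filled in the two small gaps the paper glosses over, namely the verification that the factor property $\pi\circ\alpha=\zeta\circ\pi$ persists in the uniform closure $\overline{\mathcal{S}(\zeta\times\id)}$, and the explicit reduction from minimality plus infiniteness of $Z$ to freeness of $\zeta$, which is the step that actually forces $n=0$.
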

\begin{proof}
The nonempty closed subset $\tilde{W}$ meets each $\alpha$-orbit once since $\alpha(z, w, q)=(\zeta(z), h_z(w,q))$ and $\zeta$ is minimal. Thus the statement follows from Theorem ~\ref{classifiableTheorem}.
\end{proof}
\begin{theorem}
The $K$-theory of the orbit-breaking algebra $A_{\tilde{W}}$ is isomorphic to the $K$-theory of $W$, that is,
\[K_*(A_{\tilde{W}}) \cong K^*(W).\]
\end{theorem}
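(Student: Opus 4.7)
The plan is to combine the orbit-breaking six-term exact sequence of Theorem~\ref{PutExtSeq} with the factorisation of its boundary map provided by Lemma~\ref{boundaryMapLemma}, the triviality of $\alpha_*$ on $K$-theory from Proposition~\ref{alphaActZeroKtheory}, and a K\"unneth computation, in order to identify $K_*(A_{\tilde{W}})$ with the kernel of an easily-described map, which will turn out to be $K^*(W)$.

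First, I would invoke Proposition~\ref{alphaActZeroKtheory} to note that $\alpha_*$ acts as the identity on $K^*(X)$. The Pimsner--Voiculescu sequence therefore degenerates into two short exact sequences
\[
0 \longrightarrow K^i(X) \longrightarrow K_i(A) \xrightarrow{\partial_{\rm PV}} K^{i+1}(X) \longrightarrow 0,
\]
so $\partial_{\rm PV}$ is surjective with kernel canonically the image of $K^i(X)$ in $K_i(A)$. Next, Lemma~\ref{boundaryMapLemma} applied to the inclusion $j\colon \tilde{W} \hookrightarrow X$ gives the factorisation
\[
\partial_{\rm OB} \;=\; j^* \circ \partial_{\rm PV} \colon K_i(A) \longrightarrow K^{i+1}(\tilde{W}),
\]
so determining $\ker \partial_{\rm OB}$ and verifying surjectivity reduces to understanding $j^*$.

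The key step is then to show that $j^*\colon K^*(X) \to K^*(\tilde{W})$ is an isomorphism. With $X = Z \times W \times Q$ and $\tilde{W} = \{z_0\} \times W \times Q$, the map $j$ is $\iota_{z_0} \times \id_{W \times Q}$. Using contractibility of the Hilbert cube $Q$ and $K^*(Z) \cong K^*(\mathrm{pt})$ from Theorem~\ref{ThmAboutZ}~(\ref{sameCohomAsPoint}), the K\"unneth formula for topological $K$-theory, which applies cleanly because $K^*(Z)$ and $K^*(Q)$ are free abelian, identifies both $K^*(X)$ and $K^*(\tilde{W})$ naturally with $K^*(W)$. The point inclusion $\{z_0\} \hookrightarrow Z$ induces the identity on $K^*(Z) = \mathbb{Z}$, so under these identifications $j^*$ becomes the identity on $K^*(W)$.

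Combining these steps, $\partial_{\rm OB}$ is surjective in both degrees and has kernel isomorphic to $K^*(W)$. Substituting into the six-term exact sequence of Theorem~\ref{PutExtSeq}, surjectivity of $\partial_{\rm OB}$ in both degrees forces the connecting maps $K^*(\tilde{W}) \to K_*(A_{\tilde{W}})$ to vanish. Exactness then realises $\iota_*\colon K_i(A_{\tilde{W}}) \to K_i(A)$ as an injection with image $\ker(\partial_{\rm OB}\colon K_i(A) \to K^{i+1}(\tilde{W})) \cong K^i(W)$, giving the desired isomorphism $K_*(A_{\tilde{W}}) \cong K^*(W)$. The main obstacle will be the K\"unneth argument, since $Z$ is a genuine inverse limit of spaces (not a finite CW-complex) and $Q$ is the Hilbert cube; however, because $K^*(Z)$ and $K^*(Q)$ are concentrated freely in degree zero any potential Tor obstruction vanishes, and naturality of K\"unneth with respect to the point inclusion delivers the identification of $j^*$ with the identity.
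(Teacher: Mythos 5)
Your proposal is correct and follows essentially the same route as the paper: both arguments combine the degeneration of the Pimsner--Voiculescu sequence (Proposition~\ref{alphaActZeroKtheory}), the factorisation $\partial_{\rm OB}=j^*\circ\partial_{\rm PV}$ from Lemma~\ref{boundaryMapLemma}, and the fact that $\tilde{W}\hookrightarrow X$ induces a $K$-theory isomorphism (via K\"unneth) to show the boundary maps are onto and to identify $K_*(A_{\tilde{W}})$ with $K^*(X)\cong K^*(W)$. The only cosmetic difference is that the paper finishes with the Five Lemma applied to a map of the two resulting short exact sequences, whereas you identify $\operatorname{im}(\iota_*)=\ker\partial_{\rm OB}=\ker\partial_{\rm PV}$ directly; these are the same diagram chase.
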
 
\begin{proof}
The inclusion map $\tilde{W} \hookrightarrow X$ induces an isomorphism on $K$-theory and moreover $K^*(\tilde{W}) \cong K^*(W)$. Hence the six-term exact sequence of Theorem~\ref{PutExtSeq} becomes
\begin{displaymath}
\xymatrix{ K^0(W) \ar[r] & K_0(A_{\tilde{W}}) \ar[r] & K_0(A) \ar[d]\\
K_1(A) \ar[u] & K_1(A_{\tilde{W}}) \ar[l] & K^1(W). \ar[l]}
\end{displaymath}

Using Lemma \ref{boundaryMapLemma} and again the fact that the inclusion map $\tilde{W} \hookrightarrow X$ induces an isomorphism on $K$-theory, the boundary maps in this sequence are onto and hence the six-term exact reduces to two short exact sequences:
\[
0 \rightarrow K_*(A_{\tilde{W}}) \rightarrow K_*(A) \rightarrow K^{*+1}(\tilde{W}) \rightarrow 0 ,
\]
for $i = 0, 1$.  Lemma \ref{boundaryMapLemma}, together with the commutative diagram
 \begin{displaymath} 
\xymatrix{ & C(X) \ar[dl]_{i_1} \ar[dr]^{i_2} &\\
A_{\tilde{W}} \ar[rr]_{\iota} & & A,}
\end{displaymath} 
 implies that the diagram 
\begin{displaymath}
\xymatrix{ 0  \ar[r] & K_*(A_{\tilde{W}})  \ar[r] & K_0(A) \ar[r]  & K^{*+1}(\tilde{W}) \ar[r]  & 0 \\
0  \ar[r] & K^*(X) \ar[r] \ar[u]  & K_0(A) \ar[r] \ar@{=}[u]& K^{*+1}(X) \ar[r] \ar[u] & 0}
\end{displaymath}
commutes, where the second exact sequence is obtained from the Pimsner--Voiculescu exact sequence via the discussion following Proposition \ref{alphaActZeroKtheory}. Since the map $K^{*+1}(X) \rightarrow K^{*+1}(\tilde{W})$ is an isomorphism, the Five Lemma implies that $K_*(A_{\tilde{W}}) \cong K^*(X) \cong K^*(W)$.
\end{proof}

Note that the theorem above does not tell us anything about the order structure on the $K_0$-group. However,  in particular cases we can obtain a complete description. For any given a finite CW-complex, there exists a connected CW-complex with the same $K$-theory but with trivial first cohomology group.

\begin{theorem}
Let $X = Z \times W \times Q$ and suppose $W$ has $H^{1}(W) = 0$. Then $A := C(X) \rtimes_{\alpha} \mathbb{Z}$ has no nontrivial projections. Moreover, for any tracial state $\tau \in T(A)$ the image of the map
\[ \tau_* : K_{0}(A) \to \mathbb{R}, \qquad  [p] \to \tau(p),\] is $\mathbb{Z}$ and $x \in K_0(A)_+$ if and only if $\tau(x) \geq 0$.
\end{theorem}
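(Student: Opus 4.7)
My plan is to analyse $K_0(A)$ via the Pimsner--Voiculescu exact sequence together with the orbit-breaking sequence of Theorem~\ref{PutExtSeq}, and deduce that for every $\tau \in T(A)$ the induced map $\tau_* \colon K_0(A) \to \R$ has image exactly $\Z$. Since $\alpha_* = \id$ on $K^*(X)$ by Proposition~\ref{alphaActZeroKtheory}, the PV sequence collapses to
\[
 0 \to K^0(X) \xrightarrow{i_*} K_0(A) \xrightarrow{\partial_{\mathrm{PV}}} K^1(X) \to 0,
\]
and the proof of the preceding theorem places this in a commutative diagram with the orbit-breaking exact sequence
\[
 0 \to K_0(A_{\tilde W}) \to K_0(A) \to K^1(\tilde W) \to 0
\]
in which the vertical map $K^0(X) \to K_0(A_{\tilde W})$, induced by $C(X) \hookrightarrow A_{\tilde W}$, is an isomorphism. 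The first thing to verify is $\tau_*(i_*(K^0(X))) = \Z$: the restriction of $\tau$ to $C(X)$ is integration against an $\alpha$-invariant Borel probability measure $\mu$, and for any projection $p \in M_n(C(X))$ the pointwise-rank function $x \mapsto \tr(p(x))$ is integer-valued, continuous, and therefore constant on the connected space $X = Z \times W \times Q$, giving $\tau(p) = \rank(p) \in \Z$.

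The heart of the argument is to show that the induced homomorphism $\bar\tau \colon K^1(X) \to \R/\Z$ is zero, so that $\tau_*$ takes integer values on all of $K_0(A)$. For a unitary $u$ over $C(X)$ representing a class in $K^1(X) \cong K^1(W)$, the PV preimage in $K_0(A)$ can be realised as a Bott-type projection in matrices over $A$, and its $\tau$-trace is, via a Connes-style index formula for crossed products by $\Z$, a pairing of $\mu$ with a degree-one cyclic cocycle built from $u$. This pairing factors through the natural ``first Chern character'' $K^1(W) \to H^1(W; \R)$, which vanishes because $H^1(W;\Z) = 0$ forces $H^1(W;\R) = 0$. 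This handles the torsion-free part of $K^1(W)$; for any torsion subgroup, I would use the commutative diagram together with the integrality already established on $K_0(A_{\tilde W})$ and the fact that every element of $K_0(A_{\tilde W})$ is represented by a projection in $M_n(C(X))$ to rule out fractional contributions.

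Given $\tau_*(K_0(A)) = \Z$, the ``no nontrivial projections'' statement is immediate: $A$ is simple and unital, so each tracial state $\tau$ is faithful, and a projection $p \in A$ must have $\tau(p) \in [0,1] \cap \Z = \{0, 1\}$, forcing $p \in \{0, 1_A\}$. For the order structure I would invoke Theorem~\ref{classifiableTheorem} --- noting that the homeomorphisms produced by Theorem~\ref{GlasnerWeissToZ} generically have mean dimension zero by Section~\ref{Sect:MD} --- to conclude that $A$ is classifiable and in particular $\mathcal{Z}$-stable. Strict comparison by traces in $\mathcal{Z}$-stable simple $\mathrm{C}^*$-algebras then combines with $\tau_*(K_0(A)) = \Z$ to give $x \in K_0(A)_+$ iff $\tau(x) \geq 0$.

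The main obstacle is the second paragraph: producing the Connes-style identification of $\bar\tau$ as a rotation-number pairing and verifying that it factors through $H^1(W;\R)$. The torsion-free contribution should reduce to a Chern-character computation under $H^1(W;\Z) = 0$; the torsion contribution is where the combined PV and orbit-breaking diagram, together with the concrete $K^0(X)$-realisation of $K_0(A_{\tilde W})$, must do the work.
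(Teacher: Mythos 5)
The paper's own proof is a two-line citation of Connes' theorem \cite{MR605351} in the form of \cite[Corollary 10.10.6]{Bla:K-theory}: since $H^1(Z\times W\times Q;\mathbb{Z})\cong H^1(W;\mathbb{Z})=0$ by the K\"unneth formula, that corollary applies verbatim and yields both the projectionlessness and the fact that every trace has range $\mathbb{Z}$ on $K_0$. Your first and last paragraphs are consistent with this. The genuine gap is in your second paragraph, at the torsion subgroup of $K^1(X)$. The induced map $\bar\tau\colon K^1(X)\to\mathbb{R}/\mathbb{Z}$ is a group homomorphism, so on a class $x$ with $nx=0$ you only learn $\bar\tau(x)\in\tfrac1n\mathbb{Z}/\mathbb{Z}$, which does not rule out fractional traces. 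Note also an internal tension: if, as you assert, $\bar\tau$ factored through $\mathrm{ch}_1\colon K^1(X)\to H^1(W;\mathbb{R})$, then torsion would be killed automatically (a $\mathbb{Q}$-vector space has no torsion) and no separate argument would be needed; the fact that you reach for one signals that the factorization is exactly the unproven point. Your proposed fallback is moreover circular: by the commuting triangle $C(X)\to A_{\tilde W}\to A$ and the isomorphism $(i_1)_*\colon K^0(X)\to K_0(A_{\tilde W})$, the image $\iota_*(K_0(A_{\tilde W}))\subseteq K_0(A)$ coincides with $(i_2)_*(K^0(X))$ --- precisely the subgroup on which you already established integrality --- so it carries no information about $\tau_*$ on lifts of torsion classes of the quotient $K^1(\tilde W)$.

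The clean way to close the gap (and what the cited proof actually does) is to treat $K_0(A)$ as a whole rather than through the two graded pieces of the Pimsner--Voiculescu sequence: pass to the mapping torus $M_\alpha$ and use Connes' Thom isomorphism $K_0(A)\cong K^1(M_\alpha)$, under which $\tau_*$ becomes the pairing of $\mathrm{ch}_1\colon K^1(M_\alpha)\to H^1(M_\alpha;\mathbb{Z})$ with the Ruelle--Sullivan current of the invariant measure $\mu$. Torsion in $K_0(A)$ then dies for free, since it maps into the torsion-free group $H^1(M_\alpha;\mathbb{Z})$, and the Wang sequence together with $H^1(X)=0$ and connectedness of $X$ gives $H^1(M_\alpha;\mathbb{Z})\cong H^0(X;\mathbb{Z})\cong\mathbb{Z}$, whose pairing with the current is exactly $\mathbb{Z}$. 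If you prefer the cyclic-cocycle/rotation-number route, you must actually prove that $\bar\tau$ factors through a torsion-free group on all of $K^1(X)$; that is the substance of Connes' theorem and is not a formal consequence of the Pimsner--Voiculescu and orbit-breaking sequences.
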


\begin{proof}
 This follows from a theorem of Connes \cite{MR605351} (see also \cite[Corollary 10.10.6]{Bla:K-theory}) In this case we can also compute the order structure on $K_0$ since the range of the trace on $K_0(C(X)\rtimes_{\alpha}\Z)$ is $\Z$ (see the proof of \cite[Corollary 10.10.6]{Bla:K-theory}).
\end{proof}

Let $A$ be a simple, nuclear $\mathrm{C}^*$-algebra and $\tau \in T(A)$ a tracial state. For any positive element, set $d_\tau(a) := \lim_{n \to \infty} \tau(a^{1/n})$. We say that a $\mathrm{C}^*$-algebra has \emph{strict comparison} (of positive elements) if, whenever $a, b \in A$ are positive elements such that $d_\tau(a) < d_\tau(b)$ for every $\tau \in A$, then there exists a sequence $(r_n)_n \in M_{\infty(A)}$ such that $\| r_n b r_n - a \| \to 0$ as $n \to \infty$. If $A$ has strict comparison and $p$ and $q$ are projections in $M_{\infty}(A)$, then $\tau(p) < \tau(q)$ for every $\tau \in T(A)$ implies that $[p] < [q] \in K_0(A)$.

The previous theorem allows us to say something about the order structure on the orbit-breaking subalgebra

\begin{corollary}
Let $X = Z \times W \times Q$ and suppose $W$ has $H^{1}(W) = 0$.  Suppose that $(X, \alpha)$ has mean dimension zero. Let $\tilde{W} = \{ z_0 \} \times W \times Q$. Then the orbit-breaking subalgebra $A_{\tilde{W}} \subset  A:= C(X) \rtimes_{\alpha} \mathbb{Z}$ has no nontrivial projections and $K_0(A_{\tilde{W}})_+ \cong \iota_*(K_0(A_{\tilde{W}}) \cap K_0(A)_+$, where  $\iota : A_{\tilde{W}} \to A$ is the inclusion map.
\end{corollary}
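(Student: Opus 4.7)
The first assertion is immediate from the preceding theorem: since the inclusion $\iota:A_{\tilde W}\hookrightarrow A$ is unital, any projection in $A_{\tilde W}$ is a projection in $A$, which under the hypothesis $H^1(W)=0$ must be $0$ or $1$. Hence $A_{\tilde W}$ has no nontrivial projections.

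For the order statement, the plan is to show that $\iota_*:K_0(A_{\tilde W})\to K_0(A)$ is an order embedding onto its image. Injectivity is already in hand from the proof of the preceding $K$-theory theorem, where the six-term sequence of Theorem~\ref{PutExtSeq} collapses to $0\to K_0(A_{\tilde W})\xrightarrow{\iota_*}K_0(A)\to K^1(\tilde W)\to 0$. The containment $\iota_*(K_0(A_{\tilde W})_+)\subseteq\iota_*(K_0(A_{\tilde W}))\cap K_0(A)_+$ is immediate, because projections in matrix algebras over $A_{\tilde W}$ remain projections in the corresponding matrix algebras over $A$.

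The substance lies in the reverse containment. Given $x\in K_0(A_{\tilde W})$ with $\iota_*(x)\in K_0(A)_+$, the preceding theorem yields $\tau(\iota_*(x))\ge 0$ for every $\tau\in T(A)$, and the restriction bijection $T(A)\cong T(A_{\tilde W})$ recorded in Section~\ref{Sect:Prelim} converts this to $\tau'(x)\ge 0$ for every $\tau'\in T(A_{\tilde W})$. I would split into two cases. If every trace gives $\tau'(x)>0$, then $\mathcal Z$-stability of $A_{\tilde W}$ from Theorem~\ref{OBK} (available since $(X,\alpha)$ has mean dimension zero) yields strict comparison of projections, and writing $x=[p]-[q]$ and applying strict comparison to $p$ and $q$ produces $x\in K_0(A_{\tilde W})_+$. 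If instead $\tau'(x)=0$ for all traces, then $\iota_*(x)\in K_0(A)_+$ is a class on which every trace vanishes; representing it by a projection in a matrix algebra over the simple stably finite algebra $A$, faithfulness of the traces forces that projection to be zero, so $\iota_*(x)=0$, and injectivity of $\iota_*$ gives $x=0\in K_0(A_{\tilde W})_+$. The main obstacle is making this dichotomy exhaustive; I expect this to follow from the observation that all traces on $A$ induce the same map on $K_0(A)$, which in turn comes from the Pimsner--Voiculescu decomposition $K_0(A)\cong K^0(W)\oplus K^1(W)$ on which the trace acts as the rank map on the first summand and vanishes on the second.
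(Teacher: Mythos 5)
Your proposal is correct and follows essentially the same route the paper takes: injectivity of $\iota_*$ comes from the collapsed six-term sequence, the tracial restriction bijection $T(A) \cong T(A_{\tilde W})$ transfers trace estimates, and $\mathcal Z$-stability of $A_{\tilde W}$ (via mean dimension zero) supplies strict comparison. The one place where you do more than the paper's written argument is worth flagging. The paper asserts, for $x$ with $\iota_*(x) \in K_0(A)_+$, that $0 < \sigma(x)$ for $\sigma \in T(A)$ and then applies strict comparison; but membership in $K_0(A)_+$ only yields $\sigma(x) \geq 0$, so the case $\sigma(x) = 0$ is not addressed there. You explicitly split on this dichotomy and dispose of the zero-trace case by observing that a class in $K_0(A)_+$ with vanishing trace must be zero, using that $A$ is simple and stably finite (so its tracial states are faithful), and then injectivity of $\iota_*$ finishes it. Your justification that the dichotomy is exhaustive — that the sign of $\tau(x)$ is the same for every $\tau \in T(A)$ — is also right, and can in fact be read off directly from the preceding theorem of the paper (since "$x \in K_0(A)_+$ iff $\tau(x) \geq 0$" holds for every $\tau$ simultaneously, the sign is trace-independent), without needing to re-derive it from the Pimsner--Voiculescu decomposition as you sketch at the end. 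In short: same proof strategy, with a small but genuine gap in the published argument that your version fills.
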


\begin{proof}
 That $A_{\tilde{W}}$ has no nontrivial projections is immediate. From the proof of Theorem~\ref{OBK} we see that $\iota_* : K_0(A_{\tilde{W}}) \to K_0(A)$ is injective. Suppose that we have $x \in K_0(A)_+$ satisfying $x \geq 0$. Then there are projections $p, q \in M_{\infty}(A_{\tilde{W}})$ such that $\iota_*^{-1}(x) = [p]-[q] \in K_0(A_{\tilde{W}})$. Since $(X, \alpha)$ has mean dimension zero, $A_{\tilde{W}}$ is $\mathcal{Z}$-stable and hence has strict comparison \cite{Rordam2004}. Thus it is enough to show that $\tau(p) > \tau(q)$ for any $\tau \in T(A_{\tilde{W}})$, since this in turn implies $[p]- [q] > 0$ and therefore $y \in K_0(A_{\tilde{W}})_+$ if and only if $\iota_*(y) \in K_0(A)$. Indeed, since any $\tau \in T(A_{\tilde{W}})$ is of the form $\tau = \sigma \iota$ for some $\sigma \in T(A)$, we have $0 < \sigma(x) = \sigma(\iota_*([p] - [q])) = \tau(p) - \tau(q)$. Thus $[p] - [q] \in K_0(A_{\tilde{W}})_+$ and the result follows.
\end{proof}

By using the previous corollary and Example \ref{uniqueErgodic}, we can obtain monotracial crossed products with only  trivial projections where the relevant space $X$ satisfies $K^0(X) \cong \Z \oplus G_0$ and $K^1(X)\cong G_1$. Furthermore, using Proposition \ref{KtheoryCrossed}, the minimal dynamical system can be taken so that the $K$-theory of the crossed product is $K_0(C(X)\rtimes \Z) \cong \Z \oplus G_0 \oplus G_1$ and $K_1(C(X) \rtimes \Z ) \cong \Z\oplus G_0 \oplus G_1$.

\section{Applications to the Elliott program} \label{Sect:Class}

Let $A$ be a simple, separable, unital, nuclear $\mathrm{C}^*$-algebra. The \emph{Elliott invariant} of $A$, denoted $\Ell(A)$, is the 6-tuple
\[ \Ell(A) := (K_0(A), K_0(A)_+, [1_A], K_1(A), T(A), r_A : T(A) \to S(K_0(A))),\]
where $r_A : T(A) \to S(K_0(A))$ maps a tracial state $\tau$ to the state on the ordered abelian group $(K_0(A), K_0(A)_+,  [1_A])$ defined by $(\tau)([p]-[q]) =  \tau(p) - \tau(q)$, for projections $p, q \in M_{\infty}(A)$.

In this section, we use the results of the previous sections to partially address Question~\ref{q2} of the introduction: When is a classifiable $\mathrm{C}^*$-algebras isomorphic to the $\mathrm{C}^*$-algebra of a minimal \'etale equivalence relation?  The classification theorem, stated here, is the culmination of many years of work. 

\begin{theorem}[see \cite{CETWW, BBSTWW:2Col, EllGonLinNiu:ClaFinDecRan, GongLinNiue:ZClass,  TWW}]
 \label{ClassThm} Let $A$ and $B$ be separable, unital, simple, infinite-dimensional \mbox{$\mathrm{C}^*$-algebras} with finite nuclear dimension and which satisfy the UCT. Suppose there is an isomorphism 
\[ \varphi : \Ell(A) \to \Ell(B).\]
Then there is a $^*$-isomorphism 
\[ \Phi : A \to B,\]
which is unique up to approximate unitary equivalence and satisfies $\Ell(\Phi) = \phi$.
\end{theorem}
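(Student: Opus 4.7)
The statement is the culmination of a long program rather than a single self-contained argument, so my ``plan'' is really a sketch of the strategy by which it is assembled from the cited papers. Accordingly, I would organize the proof along the classical existence/uniqueness lines of an Elliott intertwining argument, using the UCT and $\mathcal{Z}$-stability as the two technical pillars.

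The first step is to reduce to the $\mathcal{Z}$-stable setting. By the Toms--Winter-type equivalence established in \cite{CETWW} (building on \cite{MR3273581, MR3418247}), a simple, separable, unital, infinite-dimensional, nuclear $\mathrm{C}^*$-algebra has finite nuclear dimension if and only if it is $\mathcal{Z}$-stable. Thus both $A$ and $B$ can be assumed to absorb $\mathcal{Z}$ tensorially, which unlocks all the regularity properties (strict comparison, uniqueness of tracial states modulo infinitesimals, etc.) that the classification machinery requires. The UCT assumption, meanwhile, ensures that $KK$-theoretic data is controlled by $K$-theoretic data via the universal coefficient short exact sequence of Rosenberg--Schochet.

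The second step is to produce, from the isomorphism $\varphi : \Ell(A) \to \Ell(B)$, a unital $^*$-homomorphism $\Phi_0 : A \to B$ whose induced map on the Elliott invariant is $\varphi$ (\emph{existence}), together with the statement that any two such $^*$-homomorphisms are approximately unitarily equivalent (\emph{uniqueness}). The existence theorem is lifted from $KK$-theory: one uses the UCT to realize $\varphi_*$ on $K$-theory by an invertible $KK$-class, one promotes this class (together with the tracial data) to a genuine $^*$-homomorphism using the total $K$-theory and traces via the stable uniqueness/existence theorems developed in \cite{GongLinNiue:ZClass, EllGonLinNiu:ClaFinDecRan, TWW}, and the $\mathcal{Z}$-stability is used to guarantee that the obstructions to lifting are compatible with the order and pairing structure. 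The uniqueness theorem is proved by the same circle of techniques, exploiting tracial approximation by sub-homogeneous building blocks and the stable uniqueness theorem.

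The third step is a standard two-sided Elliott intertwining: given existence and uniqueness, one constructs sequences of $^*$-homomorphisms $\Phi_n : A \to B$ and $\Psi_n : B \to A$ together with unitaries in $B$ and $A$ such that the induced diagrams commute up to small perturbations, and the approximate compositions $\Psi_{n+1}\Phi_n$ and $\Phi_n \Psi_n$ converge to the identities on finite subsets. Passing to the limit produces the desired $^*$-isomorphism $\Phi : A \to B$, and $\varphi$ is realized on $\Ell(\cdot)$ by construction. Uniqueness of $\Phi$ up to approximate unitary equivalence follows from a final application of the uniqueness theorem.

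The genuine obstacle in this program is the second step: proving the abstract existence/uniqueness theorems for $^*$-homomorphisms between $\mathcal{Z}$-stable, UCT, classifiable $\mathrm{C}^*$-algebras at the full strength needed. This required developing the $2$-coloured classification of \cite{BBSTWW:2Col}, the stably projectionless refinements of \cite{GongLinNiue:ZClass, EllGonLinNiu:ClaFinDecRan}, and ultimately the quasidiagonality and nuclear-dimension bounds of \cite{TWW, CETWW}. In the present paper, however, Theorem~\ref{ClassThm} is used as a black box and the work done above in producing minimal dynamical systems and orbit-breaking relations with prescribed Elliott invariant feeds directly into its hypotheses.
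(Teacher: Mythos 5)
The paper itself gives no proof of Theorem~\ref{ClassThm}; it is imported verbatim from the cited literature and used as a black box, exactly as you observe in your final paragraph. Your sketch of the overall strategy (reduction to $\mathcal{Z}$-stability via the Toms--Winter equivalence from \cite{CETWW}, existence/uniqueness of $^*$-homomorphisms realizing a given invariant via the UCT and stable uniqueness theorems of \cite{GongLinNiue:ZClass, EllGonLinNiu:ClaFinDecRan, TWW, BBSTWW:2Col}, followed by a two-sided Elliott intertwining) is an accurate and appropriately attributed account of how the theorem is assembled in those references, so there is nothing to correct or reconcile against the paper's own (nonexistent) proof.
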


We  denote the class of all  unital, simple, infinite-dimensional \mbox{$\mathrm{C}^*$-algebras} with finite nuclear dimension and which satisfy the UCT by $\mathcal{C}$, that is,
 \[ \mathcal{C} := \{ A \mid A \text{ a classifiable $\mathrm{C}^*$-algebra}\}.\]

\subsection{Projectionless $\mathrm{C}^*$-algebras} \label{Subsect:NoProj}

In Section~\ref{Sect:Skew} the space $W$ was a finite CW complex and consequently all the $\mathrm{C}^*$-algebras considered have finitely generated $K$-theory. 
 In this subsection, orbit-breaking subalgebras without this restriction are constructed by starting from minimal dynamical systems on point-like spaces. The resulting $\mathrm{C}^*$-algebras will only have trivial projections $0$ and $1$. Since the point-like spaces have finite covering dimension, the minimal dynamical systems will have mean dimension zero and so the $\mathrm{C}^*$-algebras of this section will all be $\mathcal{Z}$-stable and hence belong to $\mathcal{C}$. Furthermore, by tensoring with a full matrix algebra we can also vary the class of the unit in the Elliott invariant, see Corollary \ref{cor:fewProjections}.
 
Let $G_0$ and $G_1$ be arbitrary countable abelian groups. Standard results imply that we can take a compact connected metric space $Y$ with $\dim(Y) < \infty$ and
\[ K^0(Y) \cong \mathbb{Z} \oplus G_0, \qquad K^1(Y)  \cong G_1.\]

We now consider $Y$ fixed for the rest of our discussion in this section. Let $d$ be an odd number large enough such that there exists embedding $Y \hookrightarrow S^{d-2}$. Let $(Z, \zeta)$ be a minimal dynamical system constructed from a minimal diffeomorphism $\varphi : S^d \rightarrow S^d$ as in Subsection~\ref{constructionZ}.

\begin{lemma}
There exists an embedding $\iota : Y \to Z$ such that $\varphi^n(\iota(Y)) \cap \iota(Y) = \emptyset$ for every $n \in \mathbb{N} \setminus \{0\}$.  
\end{lemma}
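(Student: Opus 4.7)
The plan is to produce an embedding $f\colon Y\to S^d$ whose image avoids both the excised set $L_\infty$ and all its own $\varphi$-translates, and then lift $f$ through the almost one-to-one factor map $q\colon Z\to S^d$ of Theorem~\ref{ThmAboutZ}. The condition ``$d$ large enough such that $Y\hookrightarrow S^{d-2}$'' lets me enlarge $d$ freely, so I assume throughout that $d\ge 2\dim Y+3$; this ensures both $d>2\dim Y$ and $d>\dim Y+1$, the two dimension inequalities that will drive the perturbation argument below.

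Let $\mathcal{E}$ denote the space of continuous embeddings $Y\to S^d$ with the uniform metric, which is a Baire space. Write $L_\infty=\bigcup_{m\in\mathbb{N}} L_\infty^m$ where each $L_\infty^m$ is the compact image of $[-m,m]$ under the immersion $\mathbb{R}\to S^d$ defining $L_\infty$. For each $n\in\mathbb{Z}\setminus\{0\}$ and each $m\ge 1$ define
\begin{align*}
U_n&=\{\,f\in\mathcal{E}:\varphi^n(f(Y))\cap f(Y)=\emptyset\,\},\\
V_m&=\{\,f\in\mathcal{E}:f(Y)\cap L_\infty^m=\emptyset\,\}.
\end{align*}
Each of these sets is open, since $f(Y)$, $\varphi^n(f(Y))$, and $L_\infty^m$ are compact subsets of $S^d$ varying continuously with $f$.

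The main technical step will be to verify that each $U_n$ and $V_m$ is dense in $\mathcal{E}$. For this, given $f_0\in\mathcal{E}$ I would perturb by small rotations and study the smooth family
\[
\Phi_n\colon \mathrm{SO}(d+1)\times Y\times Y\to S^d\times S^d,\qquad (R,y,y')\mapsto\bigl(Rf_0(y),\varphi^n(Rf_0(y'))\bigr).
\]
A general-position/Sard-type argument, using that the diagonal $\Delta\subset S^d\times S^d$ has codimension $d$ while the $Y$-factors of the source contribute dimension at most $2\dim Y<d$, shows that the set of $R\in\mathrm{SO}(d+1)$ for which $\Phi_n(R,\cdot,\cdot)$ meets $\Delta$ has measure zero; hence arbitrarily close to the identity there exist $R$ with $R\circ f_0\in U_n$. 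The analogous argument, with $L_\infty^m$ in place of the $\varphi^n$-translate, uses $\dim Y+1<d$ to give density of $V_m$. The Baire category theorem then produces a dense $G_\delta$ set of $f$ in $\bigcap_{n\ne 0}U_n\cap\bigcap_{m\ge 1}V_m$; I choose any such $f$.

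Since $f(Y)\subset S^d\setminus L_\infty$, and $q$ restricts by Theorem~\ref{ThmAboutZ} to a bijection from $q^{-1}(S^d\setminus L_\infty)$ onto $S^d\setminus L_\infty$, setting $\iota:=q^{-1}\circ f\colon Y\to Z$ yields a continuous injection of the compactum $Y$, hence an embedding. If $\zeta^n(\iota(y))=\iota(y')$ for some $n\ne 0$ and $y,y'\in Y$, applying $q$ and using $q\circ\zeta=\varphi\circ q$ would give $\varphi^n(f(y))=f(y')$, contradicting $f\in U_n$; so $\iota(Y)$ meets each $\zeta$-orbit at most once. The main obstacle I anticipate is making the density arguments fully rigorous: while the dimension count is transparent, executing general position cleanly for the non-smooth compactum $Y$ requires either the smooth-family argument above (exploiting that $Y$ sits smoothly inside $S^{d-2}\subset S^d$) or an appeal to a version of the general-position theorem for compacta in manifolds.
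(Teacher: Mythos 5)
Your approach is genuinely different from the paper's, and as written it has a real gap at its core.

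The paper's proof is much simpler and exploits a structural fact from the construction of $Z$ in \cite{DPS:DynZ}: some single fiber $q^{-1}\{x\}$ of the factor map $q \colon Z \to S^d$ already contains an embedded copy of $S^{d-2}$ (Lemma~1.13 there). One simply composes $Y \hookrightarrow S^{d-2} \hookrightarrow q^{-1}\{x\} \subset Z$. The orbit-avoidance is then automatic and costs nothing: $q$ intertwines $\zeta$ with $\varphi$, and $\varphi$ is free (minimal on an infinite space), so $q(\zeta^n(\iota(Y))) = \{\varphi^n(x)\}$ is disjoint from $q(\iota(Y)) = \{x\}$ for every $n \neq 0$. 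No transversality, no Baire category, no dimension counting.

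Your proposal instead places $\iota(Y)$ in the regular (fiber-one) locus of $q$ and runs a Baire/general-position argument. The plan is plausible in outline, but the density step is where it breaks. You acknowledge that Sard needs a smooth domain while $Y$ is only a compactum, and your proposed fix --- pass to the smooth inclusion $Y \subset S^{d-2}\subset S^d$ --- destroys the dimension count: if you make $\Phi_n$ smooth by replacing $Y\times Y$ with $S^{d-2}\times S^{d-2}$, then for a residual set of $R$ the preimage of the codimension-$d$ diagonal has dimension $2(d-2) - d = d-4 \geq 0$ once $d\geq 4$, so it is generically nonempty. Since you take $d \geq 2\dim Y + 3$, you are always in this regime. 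To rescue the argument you would need an actual general-position theorem for compacta in manifolds (not Sard), stated and applied carefully, and the proposal neither cites nor proves such a result. A secondary point: continuity of the lift $\iota = q^{-1}\circ f$ on the compact set $f(Y)\subset S^d\setminus L_\infty$ requires checking that the metric used to complete $S^d\setminus L_\infty$ into $Z$ agrees with the spherical metric on compact subsets bounded away from $L_\infty$; this is true from the construction in \cite{DPS:DynZ} but should be said.

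In short: a different route, not wrong in spirit, but the key density lemmas are not established and the specific repair you suggest does not work; and the paper's argument is strictly shorter because it embeds into one exceptional fiber rather than into the regular part.
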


\begin{proof}
In the proof Lemma 1.13 in \cite{DPS:DynZ} there is an embedding of $S^{d-2} \rightarrow Z$ whose image lies in the closed set
$\pi^{-1}\{ x \}$, for some  $x$ in $X$. Compose this with the embedding of $Y$ into the sphere $S^{d-2}$ to get an embedding of $Y$ into $Z$. If $n \neq 0$,  we have $\pi( \zeta^{n} (Y)) = \varphi^{n}(x) \neq x = \pi(Y)$, which implies that $\zeta^{n}(Y) \cap Y $ is empty.
\end{proof}

Let $A := C(Z) \times_{\zeta} \mathbb{Z}$ and let $A_Y$ denote the orbit-breaking subalgebra of $A$.

\begin{theorem} \label{K(A_Y)}
The $\mathrm{C}^*$-algebra $A_Y$ satisfies the following
\[K_0(A_Y) \cong K^0(Y) \cong \mathbb{Z} \oplus G_{0},\qquad K_1(A_Y) \cong K^1(Y) \cong G_{1},\]
and the positive cone of $K_0(A_Y)$ is given by
\[ K_0(A_Y)_{+} \cong \{ (n, z) \mid n=0, z=0, \text{ or }n > 0 \}.\]
\end{theorem}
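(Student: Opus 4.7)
The plan is to compute $K_*(A)$ first via the Pimsner--Voiculescu sequence, then feed this into the orbit-breaking six-term sequence of Theorem~\ref{PutExtSeq} with the boundary maps identified via Lemma~\ref{boundaryMapLemma}, and finally deduce the positive cone from $\mathcal{Z}$-stability and strict comparison. By Theorem~\ref{ThmAboutZ}(\ref{sameCohomAsPoint}) we have $K^0(Z) \cong \mathbb{Z}$ and $K^1(Z) = 0$, and by the argument of \cite[Proposition~2.8]{DPS:DynZ} (which is also what drives Proposition~\ref{alphaActZeroKtheory}) the homeomorphism $\zeta$ acts trivially on $K^*(Z)$. The Pimsner--Voiculescu sequence therefore collapses to give $K_0(A) \cong \mathbb{Z}$, generated by $[1_A]$, and $K_1(A) \cong \mathbb{Z}$, with the boundary $\partial_{\rm PV} : K_1(A) \to K^0(Z)$ an isomorphism sending the canonical unitary class to (a generator of) $\mathbb{Z}$; the boundary $\partial_{\rm PV} : K_0(A) \to K^1(Z)$ is zero since $K^1(Z) = 0$.

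Let $j : Y \hookrightarrow Z$ denote the embedding. By Lemma~\ref{boundaryMapLemma} the orbit-breaking boundary is $\partial_{\rm OB} = j^* \circ \partial_{\rm PV}$, so $\partial_{\rm OB} : K_0(A) \to K^1(Y)$ is zero. On the other hand $\partial_{\rm OB} : K_1(A) \to K^0(Y) \cong \mathbb{Z} \oplus G_0$ factors as the isomorphism $\partial_{\rm PV}$ to $K^0(Z) \cong \mathbb{Z}$ followed by $j^*$, which sends $[1_Z] \mapsto [1_Y] = (1,0)$; hence $\partial_{\rm OB}|_{K_1(A)}$ is injective with image $\mathbb{Z} \oplus 0$. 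Plugging these into Theorem~\ref{PutExtSeq}, the six-term sequence breaks into short exact sequences
\[ 0 \to G_0 \to K_0(A_Y) \xrightarrow{\iota_*} \mathbb{Z} \to 0, \qquad 0 \to K^1(Y) \to K_1(A_Y) \to 0. \]
The first splits because $\mathbb{Z}$ is free, via the section $1 \mapsto [1_{A_Y}]$ (which indeed maps to $1 = [1_A]$ under $\iota_*$), giving $K_0(A_Y) \cong \mathbb{Z} \oplus G_0$ with $\iota_*(n,g) = n$, while the second yields $K_1(A_Y) \cong K^1(Y) \cong G_1$.

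For the order structure, note that $Z$ is finite-dimensional, so $(Z,\zeta)$ has mean dimension zero, and by Theorem~\ref{classifiableTheorem} both $A$ and $A_Y$ are classifiable and in particular $\mathcal{Z}$-stable with faithful tracial states. By \cite[Theorems~6.2 and~7.10]{Phi:LargeSubalgebras}, restriction gives a bijection $T(A) \to T(A_Y)$, and the induced trace on $K_0(A) \cong \mathbb{Z}$ sends $[1_A] \mapsto 1$; hence under our splitting $\tau_*(n, g) = n$ for every $\tau \in T(A_Y)$. If $(n, g) \in K_0(A_Y)_+$ is nonzero, it is the class of a nonzero projection whose trace is strictly positive by faithfulness, forcing $n > 0$. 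Conversely, given $(n, g)$ with $n > 0$, write $(n, g) = [p] - [q]$ for projections $p, q$ in some matrix algebra over $A_Y$; then $\tau(p) - \tau(q) = n > 0$ for all $\tau$, and strict comparison of projections (a consequence of $\mathcal{Z}$-stability) produces a projection $q' \leq p$ with $q' \sim q$ and $p - q' \neq 0$, so $(n,g) = [p - q'] \in K_0(A_Y)_+$. The main subtlety is pinning down $\partial_{\rm OB}$ on $K_1(A)$ precisely enough to identify its image as the $\mathbb{Z}$ summand of $K^0(Y)$; once Lemma~\ref{boundaryMapLemma} is in hand, this is immediate, and the order-theoretic step then follows from the general machinery of $\mathcal{Z}$-stability.
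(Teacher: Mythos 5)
Your proof is correct, and the group-theoretic portion follows essentially the same route as the paper: compute $K_*(A)$ from Pimsner--Voiculescu using the point-like property of $Z$, identify $K_0(A)\cong K_1(A)\cong\mathbb{Z}$, and feed this into the orbit-breaking six-term sequence of Theorem~\ref{PutExtSeq}. You then invoke Lemma~\ref{boundaryMapLemma} to factor $\partial_{\mathrm{OB}}$ through $j^*$ and hence pin down the boundary map $K_1(A)\to K^0(Y)$ as $n\mapsto(n,0)$, whereas the paper reaches the same conclusion by appealing to Corollary~\ref{corOrbitBreakYandPoint}; the two are closely related, as that corollary is itself proved by the same naturality. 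Your splitting argument (``$\mathbb{Z}$ is free'') is a mild variant of the paper's explicit splitting $(i_1)_*\circ(i_2)_*^{-1}$; both rely on the fact that $i_2$ is an isomorphism on $K_0$, and yours is consistent with it since the section you name ($1\mapsto[1_{A_Y}]$) agrees with $(i_1)_*\circ(i_2)_*^{-1}$ on the generator.

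Where your writeup goes further than the paper is on the order structure. The paper's proof of Theorem~\ref{K(A_Y)} stops after establishing the group isomorphisms and the form of the boundary map $L$; the positive-cone claim in the theorem statement is never argued there. Your derivation of $K_0(A_Y)_+$ from $\mathcal{Z}$-stability (hence strict comparison), faithfulness of traces, and the bijection $T(A)\to T(A_Y)$ from \cite{Phi:LargeSubalgebras} is the right supplement, and indeed it is the same strategy the paper uses for the analogous order-structure corollary in Section~\ref{Sect:Skew}. The only small point worth being pedantic about is the passage ``strict comparison of projections (a consequence of $\mathcal{Z}$-stability) produces a projection $q'\leq p$ with $q'\sim q$'': this is correct because for projections Cuntz subequivalence (which is what strict comparison of positive elements delivers) coincides with Murray--von Neumann subequivalence, but this identification is worth flagging explicitly so the reader does not confuse the two notions. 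With that caveat made explicit, your argument is complete and valid.
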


\begin{proof}
It follows from \cite{DPS:DynZ} that $K_0(A) \cong \mathbb{Z}$ and $K_1(A) \cong \mathbb{Z}$. We have an exact sequence
\begin{displaymath} 
\xymatrix{ & C(Z) \ar[dl]_{i_1} \ar[dr]^{i_2} &\\
A_Y \ar[rr]_{\iota} & & A}
\end{displaymath}
where  the map $i_2$ induces a map
\[ \mathbb{Z} \cong K_0(C(Z)) \to K_0(A) \cong \mathbb{Z}.\]
By the Pimsner--Voiculescu exact sequence, as calculated in the proof of \cite[Proposition 2.8]{DPS:DynZ},  the above map is an isomorphism.  Consequently, the map $\iota_* : K_0(A_Y) \to K_0(A)$, as given in the six-term exact sequence of Theorem~\ref{PutExtSeq}, is onto. Thus the sequence of Theorem~\ref{PutExtSeq} becomes 
\begin{displaymath}
\xymatrix{ \mathbb{Z} \oplus G_0  \ar[r] & K_0(A_Y) \ar[r]^{\iota_*} & \mathbb{Z} \ar[d]^0\\
\mathbb{Z}  \ar[u]^L & K_1(A_Y) \ar[l] & G_1. \ar[l]}
\end{displaymath}
Furthermore, the map $K_0(A_Y) \to \mathbb{Z}$ splits. To see this, note that $i_2$ induces an isomorphism on $K_0$. Thus, using the commutativity of the first diagram in the proof, the splitting map is  given by $(i_1)_* \circ (i_2)_*^{-1}$. 

To complete the proof, we need to show that $L : \mathbb{Z} \to \mathbb{Z} \oplus G_0$ is the map $n \mapsto (n, 0)$. However, this follows from Corollary \ref{corOrbitBreakYandPoint}.
\end{proof}

\begin{corollary} \label{cor:fewProjections} Let $G_0$ and $G_1$ be countable abelian groups and let $\Delta$ be a finite-dimensional Choquet simplex. Then $(\Z \oplus G_0, \Z_{\geq 0}, [1]=k)$ is an ordered abelian group, and if there is a map 
\[ r : \Delta \to S(\mathbb{Z} \oplus G_0), \qquad \tau \mapsto \left((n,g) \mapsto \frac{n}{k}\right) ,\]
then there exists an amenable minimal equivalence relation $\mathcal{R}$ such that 
\[ \Ell(\mathrm{C}^*(\mathcal{R})) \cong (\Z \oplus G_0, \Z_{\geq 0}, [1]=k, G_1, \Delta,  r).\]
In particular if $A \in \mathcal{C}$ and $\Ell(A) =    (\Z \oplus G_0, \Z_{\geq 0}, [1]=k, G_1, \Delta,  r )$, then $A$ and  $\mathrm{C}^*(\mathcal{R})$ are $^*$-isomorphic.
\end{corollary}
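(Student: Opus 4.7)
The plan is to realize the required invariant by combining the orbit-breaking construction of Section~\ref{Sect:Skew} applied to a point-like system whose invariant measure simplex is $\Delta$, and then tensoring by $M_k$ to shift the unit class. The ``In particular'' clause then follows immediately from Theorem~\ref{ClassThm}.

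First, I would build the spatial ingredients. A standard Moore-space / finite CW construction produces a compact connected finite-dimensional metric space $Y$ with $K^0(Y) \cong \Z \oplus G_0$ and $K^1(Y) \cong G_1$. Since $\Delta$ is a finite-dimensional Choquet simplex it has only finitely many extreme points, so Windsor's theorem (as used in Subsection~\ref{constructionZ}) supplies, for any odd $d \geq 3$ large enough that $Y \hookrightarrow S^{d-2}$, a minimal diffeomorphism $\varphi : S^d \to S^d$ whose invariant measure simplex is affinely isomorphic to $\Delta$. Applying Theorem~\ref{ThmAboutZ} produces the point-like system $(Z, \zeta)$; by item~\ref{FactorMap} of that theorem, $M_\zeta(Z) \cong M_\varphi(S^d) \cong \Delta$. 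Using the embedding lemma from the proof of Theorem~\ref{K(A_Y)} I would embed $Y \hookrightarrow Z$ in such a way that $\zeta^n(Y) \cap Y = \emptyset$ for every $n \neq 0$, and then set $A := C(Z) \rtimes_\zeta \Z$ and $A_Y := \mathrm{C}^*(\mathcal{R}_Y)$.

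Next, I would read off the Elliott invariant of $A_Y$. The $K$-groups, the positive cone, and the unit class $[1_{A_Y}] = (1,0)$ are given directly by Theorem~\ref{K(A_Y)}. For the trace data, the bijection $T(A) \to T(A_Y)$, $\sigma \mapsto \sigma \circ \iota$, from \cite[Theorem 6.2, Theorem 7.10]{Phi:LargeSubalgebras}, combined with the standard identification of traces on the crossed product with $\zeta$-invariant probability measures, yields $T(A_Y) \cong \Delta$. The pairing is the \emph{constant} map $\tau \mapsto ((n, g) \mapsto n)$: if $\tau = \sigma \circ \iota$ and $[p] = (n, g) \in K_0(A_Y)$, then by Theorem~\ref{K(A_Y)} we have $\iota_*(n, g) = n \cdot [1_A] \in K_0(A) \cong \Z$, and hence $\tau(p) = \sigma_*(\iota_*(n,g)) = n$, independently of $\sigma$.

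To insert the scale factor $k$, I would take $\mathcal{R}$ to be the equivalence relation on $Z \times \{1, \dots, k\}$ given by $((z,i),(z',j)) \in \mathcal{R}$ iff $(z,z') \in \mathcal{R}_Y$, endowed with the product \'etale topology. Each equivalence class is $[z]_{\mathcal{R}_Y} \times \{1, \dots, k\}$, which is dense in $Z \times \{1, \dots, k\}$, so $\mathcal{R}$ is minimal; amenability is inherited from $\mathcal{R}_Y$ via the product with the finite full relation. One checks $\mathrm{C}^*(\mathcal{R}) \cong M_k \otimes A_Y$, which preserves $K$-theory and positive cone under the canonical identification, while sending the unit class to $k \cdot (1, 0) = (k, 0)$ and rescaling every trace pairing to $(n, g) \mapsto n/k$ (because normalized traces on $M_k \otimes A_Y$ are $\mathrm{tr}_k \otimes \tau$). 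This produces the required invariant. Finally, $A_Y$ lies in $\mathcal{C}$ by Theorem~\ref{classifiableTheorem} (as $(Z,\zeta)$ has mean dimension zero), and classifiability is preserved under tensoring with $M_k$, so Theorem~\ref{ClassThm} delivers the $^*$-isomorphism asserted in the ``In particular'' clause.

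The only nontrivial step is the tracial bookkeeping: verifying that \emph{every} trace induces the same state on $K_0(A_Y)$, which is what allows the map $r$ in the statement to be the constant map. This is not an analytic difficulty but a consequence of the explicit form of the six-term exact sequence in Theorem~\ref{K(A_Y)} together with $K_0(A) \cong \Z \cdot [1_A]$, so no input beyond the machinery already developed is required.
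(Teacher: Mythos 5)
Your proposal is correct and follows essentially the same route as the paper: realize $(\Z\oplus G_0,\Z_{\geq 0},1,G_1,\Delta)$ via the orbit-breaking subalgebra $A_Y\subset C(Z)\rtimes_\zeta\Z$ of Theorem~\ref{K(A_Y)} with the measure simplex of $(Z,\zeta)$ arranged to be $\Delta$, pass to the product relation on $Z\times\{1,\dots,k\}$ to get $\mathrm{C}^*(\mathcal{R})\cong A_Y\otimes M_k$ and unit class $k$, and invoke Theorem~\ref{ClassThm}. Your extra verifications (the Windsor input for $\Delta$, and the computation that every trace pairs with $K_0(A_Y)$ through $\iota_*$ and $K_0(A)\cong\Z\cdot[1_A]$) are details the paper leaves implicit but are consistent with its argument.
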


\begin{proof}
Let $Z$ be a point-like space and let $(Z, \zeta)$ be a minimal dynamical system with simplex of $\zeta$-invariant measures given by $\Delta$ (which exists by \cite{DPS:DynZ}). By Theorem~\ref{K(A_Y)}, there is  a nonempty closed subset $Y \subset Z$ meeting every $\zeta$-orbit at most once such that the associated minimal equivalence relation  $\mathcal{R}_Y \subset Z \times Z$ has $K$-theory given by $(K_0(\mathrm{C}^*(\mathcal{R}_Y)), K_0(\mathrm{C}^*(\mathcal{R}_Y)) = ( \Z \oplus G_0, \, \mathbb{Z}_+)$ and $K_1(\mathrm{C}^*(\mathcal{R}_Y)) = G_1$.  We have $[1] = 1 \in \mathbb{Z}$. To arrange that $[1] = k \in \mathbb{Z}_{> 0}$, we replace $\mathcal{R}_Y \subset Z \times Z$ with the equivalence relation on $\mathcal{R} \subset (Z \times \{1, \dots, k\}) \times (Z \times \{1, \dots, k\})$ which gives us $\mathrm{C}^*(\mathcal{R}) \cong \mathrm{C}^*(\mathcal{R}_Y)\otimes M_k$.  Since $\iota : \mathrm{C}^*(\mathcal{R}_Y) \into C(Z) \rtimes_{\zeta} \mathbb{Z}$ induces a homoemorphism of tracial state spaces, 
\[ T(\mathrm{C}^*(\mathcal{R})) \cong T(\mathrm{C}^*(\mathcal{R}_Y)) \cong T(C(Z) \rtimes_{\zeta} \mathbb{Z}) \cong \Delta,\]
and since 
\[ r_{\mathrm{C}^*(\mathcal{R}_Y)} :    T(\mathrm{C}^*(\mathcal{R}_Y)) \to S(K_0(\mathrm{C}^*(\mathcal{R}_Y)), \qquad (n, g) \mapsto n \in \mathbb{Z},\]
we have
\[ r_{\mathrm{C}^*(\mathcal{R})} :    T(\mathrm{C}^*(\mathcal{R}_Y)) \to S(K_0(\mathrm{C}^*(\mathcal{R}_Y))), \qquad (n, g) \mapsto \frac{n}{k}.\]
Thus \[ \Ell(\mathrm{C}^*(\mathcal{R})) \cong (\Z \oplus G_0, \Z^+, [1]=k, G_1, \Delta,  r),\] and since $\mathrm{C}^*(\mathcal{R})$ is a simple, separable, nuclear, unital, $\mathcal{Z}$-stable $\mathrm{C}^*$-algebra, Theorem~\ref{ClassThm} implies that for any $A \in \mathcal{C}$ we have $A \cong \mathrm{C}^*(\mathcal{R})$.
\end{proof}
\begin{remark}
It is worth noting taking the class of the unit to be 1 in the previous corollary results in $\mathrm{C}^*$-algebras with only the trivial projections 0 and 1.
\end{remark}

\subsection{$\mathrm{C}^*$-algebras with real rank zero} \label{Subsect:RR0}

In this section, we consider the crossed products and orbit-breaking subalgebras arising form the construction of minimal homeomorphisms on nonhomogeneous spaces as in Subsection~\ref{NonHomSec}. In contrast to the $\mathrm{C}^*$-algebras of the previous subsection, these $\mathrm{C}^*$-algebras will always have a plentiful supply of projections: they all have real rank zero (and this can be read directly from the $K$-theory).

To begin, we require a lemma about the $K$-theory of the crossed product $\mathrm{C}^*$-algebra associated to a nonhomogeneous extension of a Cantor minimal system.  Let $K$ denote the Cantor set and suppose $\varphi : K \to K$ be a minimal homeomorphism and let $(\tilde{K}, \tilde{\varphi})$ be a minimal extension of $(K, \varphi)$ with factor map $\pi : \tilde{K} \to K$, as in Theorem~\ref{FloGjeJohSysThm}. The proof for the case that $\pi^{-1}(x)$, $x \in \tilde{K}$, is either a single point or $[0,1]^n$ is similar to the proof of \cite[Theorem 4]{FloGjeJohSys} (where $n = 1$). See also the proof in \cite{DPS:nonHom}, where $\pi^{-1}(x)$ can be higher dimensional cubes as well as more complicated spaces.  Note that in \cite{FloGjeJohSys} notation  $K(K, \varphi)$ is used to denote the $5$-tuple $(K_0(A), K_0(A)_+, [1_A], K_1(A), T(A))$ for $A = C(K) \rtimes_{\varphi} \mathbb{Z}$.

\begin{lemma} \label{KthFGJsys}
Let $A := C(K) \rtimes_{\varphi} \mathbb{Z}$ and $B := C(\tilde{K}) \rtimes_{\tilde{\varphi}} \mathbb{Z}$. Then the factor map $\pi : (\tilde{K}, \tilde{\varphi})  \to (K, \varphi)$ induces isomorphisms
\[ (K_0(A), K_0(A)_+, [1_A]) \cong  (K_0(B), K_0(B)_+, [1_B]), \qquad K_1(A) \cong K_1(B),\]
and an affine homeomorphism
\[ T(B) \cong T(A).\]
\end{lemma}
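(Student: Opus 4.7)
The plan is to realise all three comparisons through the single unital $\mathrm{C}^*$-embedding $\iota : A \hookrightarrow B$ induced by the factor map $\pi$, and then combine naturality of Pimsner--Voiculescu, a measure-theoretic lifting argument, and classifiability.

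For the $K$-groups I would apply the Pimsner--Voiculescu sequence functorially along $\pi$. Since $K$ is the Cantor set, $K^1(K) = 0$, and by Theorem~\ref{FloGjeJohSysThm} also $K^1(\tilde K) = 0$. Both PV sequences therefore collapse to
\begin{align*}
K_0(A) &\cong K^0(K)/(\id - \varphi_*)K^0(K), & K_1(A) &\cong \ker(\id - \varphi_*),\\
K_0(B) &\cong K^0(\tilde K)/(\id - \tilde\varphi_*)K^0(\tilde K), & K_1(B) &\cong \ker(\id - \tilde\varphi_*).
\end{align*}
Because $\pi \circ \tilde\varphi = \varphi \circ \pi$, the isomorphism $\pi^* : K^0(K) \to K^0(\tilde K)$ from Theorem~\ref{FloGjeJohSysThm} intertwines $\varphi_*$ and $\tilde\varphi_*$, so it descends to isomorphisms on both kernels and cokernels. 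These descended maps are exactly $\iota_*$, and unitality of $\iota$ sends $[1_A]$ to $[1_B]$.

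For the tracial state spaces, tracial states on a free minimal crossed product correspond bijectively to invariant Borel probability measures on the underlying space, and under this identification the restriction map $\iota^* : T(B) \to T(A)$ becomes the pushforward $\pi_*$ on invariant measures. Let $E \subset K$ be the set where $\pi$ fails to be injective. The construction recalled in Subsection~\ref{NonHomSec} makes $E$ a countable, $\varphi$-invariant union of orbits; on the other hand, every $\varphi$-invariant probability measure $\mu$ on the Cantor set is non-atomic by freeness and minimality, so $\mu(E) = 0$. Hence any $\tilde\varphi$-invariant lift $\tilde\mu$ satisfies $\tilde\mu(\pi^{-1}(E)) = \mu(E) = 0$ and is uniquely determined by $\mu$ via the homeomorphism $\pi : \tilde K \setminus \pi^{-1}(E) \to K \setminus E$; conversely, extending $\mu$ by zero on $\pi^{-1}(E)$ furnishes such a lift. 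The resulting bijection is manifestly affine and weak-$*$ continuous, hence an affine homeomorphism of Choquet simplices by compactness.

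To upgrade the $K_0$-group isomorphism to an order isomorphism, I would observe that both $K$ and $\tilde K$ are finite dimensional, so both systems have mean dimension zero, and Theorem~\ref{classifiableTheorem} makes $A$ and $B$ classifiable and hence $\mathcal{Z}$-stable with strict comparison. For such simple $\mathrm{C}^*$-algebras the order on $K_0$ is recovered from the trace pairing, and the naturality identity $\tau(\iota_*(x)) = (\iota^*\tau)(x)$, combined with the bijectivity of $\iota^*$ established above, forces $\iota_*$ to carry $K_0(A)_+$ bijectively onto $K_0(B)_+$. The step I expect to require the most care is the assertion that $E$ is countable and $\varphi$-invariant, which has to be read off the explicit Bratteli--Vershik style construction of $(\tilde K,\tilde\varphi)$ in Subsection~\ref{NonHomSec}; once this is in hand, everything else is naturality plus standard classification-adjacent facts.
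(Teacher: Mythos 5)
Your argument is essentially sound, but you should know that the paper does not actually prove this lemma: it defers entirely to \cite[Theorem 4]{FloGjeJohSys} and to the construction in \cite{DPS:nonHom}, so what you have written is a reconstruction rather than a paraphrase. The three ingredients you use --- naturality of the Pimsner--Voiculescu sequence along the equivariant unital inclusion $C(K)\hookrightarrow C(\tilde K)$ (with $K^1=0$ on both sides, so the ladder collapses and $\pi^*$ descends to $\iota_*$ on kernels and cokernels), the identification of tracial states with invariant measures, and a trace-pairing argument for the positive cone --- are all legitimate, and the affine homeomorphism of trace spaces is handled correctly. Two points of comparison. First, for the order isomorphism the cited proofs work directly with the dimension-group/AT structure of Cantor minimal crossed products rather than invoking $\mathcal{Z}$-stability and strict comparison for $B$; your version does go through (since $\tilde K$ is finite dimensional, $B$ has mean dimension zero and is $\mathcal{Z}$-stable, hence weakly unperforated with order determined by traces), but it is heavier machinery than the lemma needs. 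Second, and more substantively, the step you flag yourself --- that the non-injectivity set $E\subset K$ is countable --- is not quite the right statement to aim at. What the measure argument actually requires is only that $E$ is a $\varphi$-invariant Borel set with $\mu(E)=0$ for every invariant $\mu$, and in the Bratteli--Vershik constructions of \cite{floyd1949, FloGjeJohSys, DPS:nonHom} this is proved by observing that $E$ is contained in a decreasing sequence of clopen sets (the ``thickened'' towers at each level) whose measure tends to $0$ uniformly over invariant measures. Countability of $E$ does happen to hold there (it is a countable union of orbits), so your non-atomicity argument is not wrong, but it is the fragile version: routing the null-set claim through the shrinking clopen sets removes the dependence on counting fibers and matches what the references actually establish. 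With that adjustment your proof is complete.
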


Let $G_0$ be a simple dimension group, $T$ a countable abelian torsion group and $G_1$ a countable abelian group. By standard results in topological $K$-theory there exists a compact connected metric space $Y$ with $\dim(Y) < \infty$ and
\[ K^0(Y) \cong \mathbb{Z} \oplus T, \qquad K^1(Y)  \cong G_1.\]
By \cite[Corollary 6.3]{MR1194074}, there exists Cantor minimal system $(K, \psi)$ such that
\[
K_0(C(K) \rtimes_{\psi} \Z) \cong G_0, \qquad K_1(C(K)\rtimes_{\psi} \Z) \cong \Z.
\]
Let $n \in \N$ be large enough so that there exists embedding $Y \hookrightarrow [0,1]^n$ and let  $(\tilde{K}, \tilde{\psi})$ be the nonhomogeneous extension with $\pi^{-1}(x)$ a point or $\pi^{-1}(x) = [0,1]^n$ and which has the same $K$-theory as $(K, \psi)$. By Lemma \ref{KthFGJsys}, there is an explicit isomorphism between the $K$-theory of the crossed product $\mathrm{C}^*$-algebras, induced from the factor map $\pi : \tilde{K} \rightarrow K$, which preserves both the order and the class of the unit.
Let $B :=C(\tilde{K}) \rtimes_{\psi}\Z$ and form $B_Y$, the orbit-breaking subalgebra associated to $Y$, considered as a closed subspace of $\tilde{K}$ via $Y \hookrightarrow I^n \hookrightarrow \tilde{K}$. Since $I^n \cap \psi^k(I^n) = \emptyset$ for $k \neq 0$, we have that $Y\cap \psi^k(Y) = \emptyset$ for $k \neq 0$ and hence  $B_Y$ is simple.

\begin{theorem}
The $K$-theory of the orbit-breaking subalgebra $B_Y$ is given by
\[
(K_0(B_Y), K_0(B_Y)_+, [1]) \cong (T \oplus G_0, G_0^+, 1_{G_0}), \qquad K_1(B_Y) \cong G_1.
\]
\end{theorem}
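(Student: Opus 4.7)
The approach is to apply Putnam's six-term exact sequence (Theorem~\ref{PutExtSeq}) to the pair $Y \subset \tilde K$, identify the boundary maps $\partial_{\rm OB}$ via Lemma~\ref{boundaryMapLemma}, and then exploit the crucial geometric fact that the inclusion $Y \hookrightarrow \tilde K$ factors through the contractible cube $[0,1]^n = \pi^{-1}(x_0) \subset \tilde K$, so that the restriction map $j^* \colon K^*(\tilde K) \to K^*(Y)$ factors through $K^*(\mathrm{pt})$.

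First I would identify $K^*(\tilde K)$ and the Pimsner--Voiculescu boundary. By Theorem~\ref{FloGjeJohSysThm}, $\pi^* \colon K^*(K) \to K^*(\tilde K)$ is an isomorphism, so $K^0(\tilde K) \cong C(K,\mathbb Z)$ and $K^1(\tilde K) = 0$; the Pimsner--Voiculescu sequence for $B$ then collapses, recovering $K_0(B) \cong G_0$ and $K_1(B) \cong \mathbb Z = \langle [u]_1 \rangle$ (consistent with Lemma~\ref{KthFGJsys}) with $\partial_{\rm PV}[u]_1 = -[1_{\tilde K}]_0$, following the computation in the proof of Lemma~\ref{boundaryMapLemma}. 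Combining this with Lemma~\ref{boundaryMapLemma} and the factoring of $j$ through the contractible $[0,1]^n$ yields two conclusions: first, $\partial_{\rm OB} \colon K_0(B) \to K^1(Y)$ factors through $K^1([0,1]^n) = 0$ and hence vanishes; second, $\partial_{\rm OB} \colon K_1(B) \to K^0(Y) = \mathbb Z \oplus T$ sends $[u]_1$ to $-j^*[1_{\tilde K}]_0 = -[1_Y]_0 = (-1,0)$, and is therefore injective with image the $\mathbb Z$-summand. Plugging these back into the six-term exact sequence immediately gives $K_1(B_Y) \cong K^1(Y) = G_1$ and the short exact sequence
\[
0 \longrightarrow T \longrightarrow K_0(B_Y) \stackrel{\iota_*}{\longrightarrow} G_0 \longrightarrow 0,
\]
with $\iota_*[1_{B_Y}] = 1_{G_0}$.

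The main obstacle will be splitting this extension and identifying the positive cone and class of the unit, since $G_0$ is typically not free as an abelian group. My plan is to use the commutative diagram from Section~\ref{Sect:Prelim} relating $C(\tilde K) \hookrightarrow B_Y \stackrel{\iota}{\hookrightarrow} B$: writing $(i_2)_* \colon K^0(\tilde K) \twoheadrightarrow G_0$ for the Pimsner--Voiculescu quotient and $(i_1)_* \colon K^0(\tilde K) \to K_0(B_Y)$ for the map through $B_Y$, it suffices to show $(i_1)_*$ descends to a section $s \colon G_0 \to K_0(B_Y)$ with $s(1_{G_0}) = [1_{B_Y}]$. By minimality of $\psi$, every class in $G_0$ has a representative $\chi_U$ for some clopen $U \subset K$ with $x_0 \notin U$; for such $U$, the element $u\chi_{\pi^{-1}(U)} \in uC_0(\tilde K \setminus Y) \subset B_Y$ witnesses the relation $[\chi_{\pi^{-1}(U)}]_0 = [\chi_{\pi^{-1}(\psi(U))}]_0$ in $K_0(B_Y)$. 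Combining this with the freeness of $C(K, \mathbb Z)$ as an abelian group (it is the direct limit of the free groups $C(K_n, \mathbb Z)$ for a refining sequence of clopen partitions of $K$) builds the section $s$, and the constant function $\mathbf 1_{\tilde K}$ gives $s(1_{G_0}) = [1_{B_Y}]$. Finally, since $\dim \tilde K < \infty$ the system $(\tilde K, \tilde\psi)$ has mean dimension zero, so by Theorem~\ref{classifiableTheorem} the algebra $B_Y$ is classifiable and in particular $\mathcal Z$-stable, hence has strict comparison. Combined with the trace bijection $T(B_Y) \cong T(B)$ and the vanishing of traces on the torsion subgroup $T$, this identifies $K_0(B_Y)_+$ with $G_0^+$ under the decomposition $K_0(B_Y) \cong T \oplus G_0$.
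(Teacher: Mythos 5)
Your proposal is correct, and the first stage --- running the six-term sequence of Theorem~\ref{PutExtSeq}, using Lemma~\ref{boundaryMapLemma} together with $\partial_{\rm PV}[u]_1=-[1]_0$ and the factorization of $Y\into\tilde K$ through the contractible cube to identify $L\colon\Z\to\Z\oplus T$ as $l\mapsto(\pm l,0)$ --- is essentially the paper's argument, which packages the same computation as Corollary~\ref{corOrbitBreakYandPoint}. Where you genuinely diverge is in the last two steps. For the splitting of $0\to T\to K_0(B_Y)\to G_0\to 0$, the paper invokes the functoriality statement of Lemma~\ref{orbitBreakFuncProp} for the inclusion $Y\subset I^n$: since $K^0(I^n)\cong\Z$, the map $K_0(B_{I^n})\to K_0(B)\cong G_0$ is an isomorphism, and composing its inverse with $K_0(B_{I^n})\to K_0(B_Y)$ is the section. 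Your alternative builds the section by hand, showing that $(i_1)_*\colon K^0(\tilde K)\to K_0(B_Y)$ kills $\mathrm{im}(\id-\tilde\psi_*)$ via the partial isometries $u\chi_{\pi^{-1}(U)}$ and hence descends to $G_0=\operatorname{coker}(\id-\tilde\psi_*)$; this works, but the sentence ``every class in $G_0$ has a representative $\chi_U$ with $x_0\notin U$'' is not the right formulation (not every class is a single characteristic function) --- what you actually need, and what your partial-isometry relation delivers, is that $\xi-\tilde\psi_*\xi$ is a $\Z$-combination of differences $[\chi_{\pi^{-1}(U)}]-[\chi_{\pi^{-1}(\psi(U))}]$ with $x_0\notin U$, handling the piece containing $x_0$ by passing to its complement; the appeal to freeness of $C(K,\Z)$ is then unnecessary. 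For the positive cone, the paper follows Putnam's Cantor-set argument: every $a\in(G_0)_+$ lifts to $c\in K^0(\tilde K)_+$ with $(i_2)_*(c)=a$, and $b=(i_1)_*(c)$ is a positive preimage; your route instead goes through $\mathcal Z$-stability, strict comparison, and the trace bijection $T(B_Y)\cong T(B)$, exactly as the paper itself does in the projectionless setting of Section~\ref{Sect:Skew}. Your version costs more machinery (Theorem~\ref{classifiableTheorem} and \cite{Rordam2004}) but gives both inclusions for the cone at once, whereas the paper's lifting argument is more elementary but only explicitly establishes surjectivity of $\iota_*$ on positive cones.
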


\begin{proof}
The proof is similar to the proof of Theorem 4.1 in \cite{Putnam:MinHomCantor}. As before, we have the six-term exact sequence of Theorem~\ref{PutExtSeq} and the commutative diagram
\begin{displaymath} 
\xymatrix{ & C(\tilde{K}) \ar[dl]_{i_1} \ar[dr]^{i_2} &\\
B_Y \ar[rr]_{\iota} & & B,}
\end{displaymath}
which gives us the six-term exact sequence
\begin{displaymath}
\xymatrix{ \mathbb{Z} \oplus T  \ar[r] & K_0(B_Y) \ar[r]^{\iota_*} & G_0 \ar[d]^0\\
\mathbb{Z}  \ar[u]^L & K_1(B_Y) \ar[l] & G_1. \ar[l]}
\end{displaymath}
Using Corollary \ref{corOrbitBreakYandPoint}, it follows that
\[
0 \rightarrow \mathbb{Z} \rightarrow \Z \oplus T  \rightarrow K_0(B_Y) \rightarrow G_0 \rightarrow 0, 
\]
and $K_1(B_Y)\cong G_1 $. Moreover, the map $\Z \rightarrow \Z \oplus T$ is given by $l \mapsto (l, 0)$, so we have the short exact sequence
\[
0 \rightarrow T \rightarrow K_0(B_Y) \rightarrow G_0 \rightarrow 0.
\]
To complete the first part of the proof we show that this sequence splits. To do so, consider the maps $Y \rightarrow I^n$ and associated orbit-breaking subalgebras. Lemma \ref{orbitBreakFuncProp} implies that we have 
\begin{displaymath}
\xymatrix{ \ar[r] & \Z \ar[r] \ar[d] & K_0(B_{I^n}) \ar[r] \ar[d] & K_0(C(\tilde{K})\rtimes \Z) \ar[r] \ar@{=}[d] &   \\
\ar[r] & \Z \oplus T \ar[r] &  K_0(B_Y)\ar[r]& K_0(C(\tilde{K})\rtimes \Z) \ar[r] &. }
\end{displaymath}
Moreover, the map $K_0(B_{I^n}) \rightarrow K_0(C(\tilde{K})\rtimes \Z) \cong G_0$ is isomorphism, which gives us the required splitting.

Next we show that the positive cone of $K_0(B_Y)$ is $(G_0)_+$. We follow the proof of Theorem 4.1 in \cite{Putnam:MinHomCantor}. The result will follow by showing that given $a\in (G_0)_+$ there exists $b\in K_0(B_Y)_+$ such that $\iota_*(b)=a$. As such, let $a\in (G_0)_+$. Observe that for the diagram
\begin{displaymath} 
\xymatrix{ & C(\tilde{K}) \ar[dl]_{i_1} \ar[dr]^{i_2} &\\
B_Y \ar[rr]_{\iota} & & B}
\end{displaymath}
there exists $c\in K^0(\tilde{K})_+$ such that $(i_2)_*(c)=a$. Then the element $b=(i_1)_*(c)$ has the required property.

Finally, the class of the unit is respected by the map $\iota_*$ because $\iota$ is unital.
\end{proof}

\begin{corollary}
Let $A$ be a simple, separable, unital $\mathrm{C}^*$-algebra with finite decomposition rank, real rank zero and that satisfies the UCT. Suppose that 
\[ K_0(A) \cong T \oplus G_0, \qquad K_1(A) \cong G_1, \] 
where $T$ is a countable torsion abelian group, $G_0$ is a simple dimension group, $G_1$ is a countable abelian group and the order structure and class of the unit of $K_0(A)$ are the same as the simple dimension group $G_0$.
Then there exists an amenable equivalence relation, $\mathcal{R}$, with 
$C^{*}(\mathcal{R}) \cong A$.
\end{corollary}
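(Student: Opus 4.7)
The plan is to combine the preceding theorem with the Elliott classification theorem (Theorem~\ref{ClassThm}): construct an amenable minimal equivalence relation whose $\mathrm{C}^*$-algebra has the same Elliott invariant as $A$, and then invoke classification.

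For the $K$-theoretic side, I would invoke the preceding theorem with the torsion group $T$, simple dimension group $G_0$, and abelian group $G_1$ dictated by $A$. The Herman--Putnam--Skau theorem \cite{MR1194074} supplies a Cantor minimal system $(K, \psi)$ with $K_0(C(K) \rtimes_\psi \mathbb{Z}) \cong G_0$ as an ordered group with order unit $1_{G_0}$, and $K_1(C(K) \rtimes_\psi \mathbb{Z}) \cong \mathbb{Z}$. Choose a finite-dimensional compact connected metric space $Y$ with $K^0(Y) \cong \mathbb{Z} \oplus T$ and $K^1(Y) \cong G_1$, embed $Y$ into some cube $[0,1]^n$, and construct the nonhomogeneous extension $(\tilde{K}, \tilde{\psi})$ of Theorem~\ref{FloGjeJohSysThm}. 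The orbit-breaking subalgebra $B_Y = \mathrm{C}^*(\mathcal{R}_Y) \subset B := C(\tilde{K}) \rtimes_{\tilde{\psi}} \mathbb{Z}$ then has exactly the $K$-theoretic data prescribed for $A$ by the preceding theorem. Since $\tilde{K}$ is finite-dimensional, $(\tilde{K}, \tilde{\psi})$ has mean dimension zero, so $B_Y \in \mathcal{C}$ by Theorem~\ref{classifiableTheorem}.

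For the tracial data, $T(B_Y) \cong T(B)$ by the Phillips restriction theorem cited in the preliminaries, and Lemma~\ref{KthFGJsys} identifies $T(B) \cong T(C(K) \rtimes_\psi \mathbb{Z})$, which by Herman--Putnam--Skau is affinely homeomorphic to the state space $S(G_0, 1_{G_0})$, with $r_{B_Y}$ realized as the natural pairing. On the $A$ side, the hypothesis that the order and unit of $K_0(A) = T \oplus G_0$ are inherited from $G_0$ forces torsion elements to be infinitesimal (both positive and negative in the natural order), so every state on $K_0(A)$ vanishes on $T$ and hence $S(K_0(A), [1_A]) \cong S(G_0, 1_{G_0})$. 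Real rank zero, together with finite decomposition rank (which yields $\mathcal{Z}$-stability, and thus stable rank one and strict comparison), forces $r_A : T(A) \to S(K_0(A), [1_A])$ to be an affine homeomorphism, via Blackadar--Handelman type arguments. The pairings thus match, giving $\Ell(A) \cong \Ell(B_Y)$, and Theorem~\ref{ClassThm} delivers the desired isomorphism $A \cong \mathrm{C}^*(\mathcal{R}_Y)$.

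The main technical hurdle is the tracial identification $T(A) \cong S(K_0(A), [1_A])$: one must combine the structural hypotheses on $A$ (real rank zero plus finite decomposition rank, hence $\mathcal{Z}$-stability and stable rank one) with the fact that torsion is infinitesimal in $K_0(A)$, in order to reduce the state space to that of the pure dimension group $G_0$ and thereby align it with the tracial simplex of the Cantor system built above.
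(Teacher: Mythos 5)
Your proof is correct and follows the same strategy as the paper's: build the orbit-breaking algebra $B_Y$ with the prescribed $K$-theory via the preceding theorem, check $B_Y$ is classifiable (mean dimension zero since $\dim\tilde{K}<\infty$), match Elliott invariants, and invoke Theorem~\ref{ClassThm}. The difference is one of detail rather than route. The paper's proof is two sentences: since $A$ has real rank zero, the tracial state space and pairing map are ``redundant,'' so the isomorphism class of $A$ is determined by $K$-theory alone. You instead verify the tracial identification explicitly, chasing $T(B_Y)\cong T(B)\cong T(C(K)\rtimes_\psi\Z)\cong S(G_0,1_{G_0})$ on the groupoid side, and using real rank zero plus $\mathcal{Z}$-stability (via Blackadar--Handelman) to identify $T(A)\cong S(K_0(A),[1_A])\cong S(G_0,1_{G_0})$ on the abstract side, noting that states kill the torsion summand. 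Your version has the merit of being self-contained about why ``redundant'' is justified: the paper's terse statement implicitly requires knowing that $B_Y$ also has real rank zero (or that its pairing is a homeomorphism onto $S(K_0)$), whereas your argument establishes the trace space and pairing of $B_Y$ directly from the Cantor minimal system and the Phillips restriction theorem, sidestepping that issue cleanly.
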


\begin{proof}
Since $A$ has real rank zero, the tracial state space and pairing map in the Elliott invariant of $A$ are redundant. Thus the isomorphism class of $A$ consists of those $\mathrm{C}^*$-algebras with real rank zero and isomorphic $K$-theory.  
\end{proof}

\subsection*{Acknowledgements}  The authors thank the Banff International Research Station and the organisers of the workshop “Future Targets in the Classification Program for Amenable $\mathrm{C}^*$-Algebras”, where this project was initiated. Thanks also to the Department of Mathematics and Statistics at the University of Victoria and the Department of Mathematics of the University of Colorado Boulder for research visits facilitating this collaboration. Work on the project was also facilitated by the Lorentz Center where the first and third authors attended a conference on Cuntz--Pimsner algebras in June 2018.


\begin{thebibliography}{10}

\bibitem{MR1799683}
Claire Anantharaman-Delaroche and Jean Renault.
\newblock {\em Amenable groupoids}, volume~36 of {\em Monographies de
  L'Enseignement Math\'{e}matique [Monographs of L'Enseignement
  Math\'{e}matique]}.
\newblock L'Enseignement Math\'{e}matique, Geneva, 2000.
\newblock With a foreword by Georges Skandalis and Appendix B by E. Germain.

\bibitem{ArBkPh-Z}
Dawn Archey, Julian Buck, and N.~Christopher Phillips.
\newblock Centrally large subalgebras and tracial {$\mathcal Z$} absorption.
\newblock {\em Int. Math. Res. Not. IMRN}, (6):1857--1877, 2018.

\bibitem{ArcPhi2016}
Dawn Archey and N.~Christopher Phillips.
\newblock Permanence of stable rank one for centrally large subalgebras and
  crossed products by minimal homeomorphisms.
\newblock {To appear in J. Operator Theory}.

\bibitem{AusMitra:GrpModGelfDual}
Kyle Austin and Atish Mitra.
\newblock Groupoid models of {$\mathrm{C}^*$}-algebras and {G}elfand duality.
\newblock {Preprint arXiv:1804.00967}, 2018.

\bibitem{BarLi:CartanUCT1}
Sel\c{c}uk Barlak and Xin Li.
\newblock Cartan subalgebras and the {UCT} problem.
\newblock {\em Adv. Math.}, 316:748--769, 2017.

\bibitem{BarLi:CartanUCT2}
Sel\c{c}uk Barlak and Xin Li.
\newblock Cartan subalgebras and the {UCT} problem, {II}.
\newblock {Preprint arXiv:1704.04939}, 2017.

\bibitem{Bla:K-theory}
Bruce Blackadar.
\newblock {\em {$K$}-theory for operator algebras}, volume~5 of {\em
  Mathematical Sciences Research Institute Publications}.
\newblock Cambridge University Press, Cambridge, second edition, 1998.

\bibitem{BBSTWW:2Col}
Joan Bosa, Nathanial~P. Brown, Yasuhiko Sato, Aaron Tikuisis, Stuart White, and
  Wilhelm Winter.
\newblock {Covering dimension of $C^*$-algebras and 2-coloured classification}.
\newblock {\em {Mem. Amer. Math. Soc.}}, {257}({1233}):{vii+97}, {2019}.

\bibitem{Brown:LefFixPt}
Robert~F. Brown.
\newblock {\em The {L}efschetz fixed point theorem}.
\newblock Scott, Foresman and Co., Glenview, Ill.-London, 1971.

\bibitem{CETWW}
Jorge Castillejos, Samuel Evington, Aaron Tikuisis, Stuart White, and Wilhelm
  Winter.
\newblock Nuclear dimension of simple {$\mathrm{C}^*$}-algebras.
\newblock {Preprint arXiv:1901.05853v2}, 2019.

\bibitem{Cha:HomHibCubMfd}
Thomas~A. Chapman.
\newblock Homeomorphisms of {H}ilbert cube manifolds.
\newblock {\em Trans. Amer. Math. Soc.}, 182:227--239, 1973.

\bibitem{MR605351}
Alain Connes.
\newblock An analogue of the {T}hom isomorphism for crossed products of a
  {$C^{\ast} $}-algebra by an action of {${\bf R}$}.
\newblock {\em Adv. in Math.}, 39(1):31--55, 1981.

\bibitem{MR662736}
Alain Connes, Jacob Feldman, and Benjamin Weiss.
\newblock An amenable equivalence relation is generated by a single
  transformation.
\newblock {\em Ergodic Theory Dynamical Systems}, 1(4):431--450 (1982), 1981.

\bibitem{Coornaert}
Michel Coornaert.
\newblock {\em Topological dimension and dynamical systems}.
\newblock Universitext. Springer, Cham, 2015.
\newblock Translated and revised from the 2005 French original.

\bibitem{DPS:DynZ}
Robin~J. Deeley, Ian~F. Putnam, and Karen~R. Strung.
\newblock {Constructing minimal homeomorphisms on point-like spaces and a
  dynamical presentation of the Jiang--Su algebra}.
\newblock {\em {J. Reine Angew. Math.}}, {742}:{241--261}, {2018}.

\bibitem{DPS:nonHom}
Robin~J. Deeley, Ian~F. Putnam, and Karen~R. Strung.
\newblock Non-homogeneous extensions of {C}antor minimal systems.
\newblock {Preprint}, 2019.

\bibitem{DirMal:NonInvMinSkePro}
Mat\'{u}\v{s} Dirb\'{a}k and Peter Mali\v{c}k\'{y}.
\newblock On the construction of non-invertible minimal skew products.
\newblock {\em J. Math. Anal. Appl.}, 375(2):436--442, 2011.

\bibitem{EllGonLinNiu:ClaFinDecRan}
George~A. Elliott, Guihua Gong, Huaxin Lin, and Zhuang Niu.
\newblock On the classification of simple amenable {$\mathrm{C}^*$}-algebras
  with finite decomposition rank {II}.
\newblock {Preprint arXiv:1507.03437v2}, 2015.

\bibitem{EllNiu:MeanDimZero}
George~A. Elliott and Zhuang Niu.
\newblock The {$\mathrm{C}^*$}-algebra of a minimal homeomorphism of zero mean
  dimension.
\newblock {\em Duke Math. J.}, 166(18):3569--3594, 2017.

\bibitem{FatHer:Diffeo}
Albert Fathi and Michael~R. Herman.
\newblock Existence de diff\'{e}omorphismes minimaux.
\newblock pages 37--59. Ast\'{e}risque, No. 49, 1977.

\bibitem{floyd1949}
Edwin~E. Floyd.
\newblock A nonhomogeneous minimal set.
\newblock {\em Bull. Amer. Math. Soc.}, 55(10):957--960, 10 1949.

\bibitem{Ful:PerPts}
F.~B. Fuller.
\newblock {The existence of periodic points}.
\newblock {\em Ann. of Math. (2)}, 57:229--230, 1953.

\bibitem{GioKerr:Subshifts}
Julien Giol and David Kerr.
\newblock {Subshifts and perforation}.
\newblock {\em J. Reine Angew. Math.}, 639:107--119, 2010.

\bibitem{FloGjeJohSys}
Richard Gjerde and {\O}rjan Johansen.
\newblock {$\mathrm{C}^*$}-algebras associated to non-homogeneous minimal
  systems and their {K}-theory.
\newblock {\em Math. Scand.}, 85(1):87--104, 1999.

\bibitem{GlaWei:MinSkePro}
Shmuel Glasner and Benjamin Weiss.
\newblock {On the construction of minimal skew products}.
\newblock {\em Israel J. Math.}, 34(4):321--336, 1980.

\bibitem{GongLinNiue:ZClass}
Guihua Gong, Huaxin Lin, and Zhuang Niu.
\newblock Classification of finite simple amenable {$\mathcal{Z}$}-stable
  {$\mathrm{C}^*$}-algebras.
\newblock {Preprint arXiv:1501.00135v6}, 2015.

\bibitem{MR1742309}
Misha Gromov.
\newblock Topological invariants of dynamical systems and spaces of holomorphic
  maps. {I}.
\newblock {\em Math. Phys. Anal. Geom.}, 2(4):323--415, 1999.

\bibitem{MR1194074}
Richard~H. Herman, Ian~F. Putnam, and Christian~F. Skau.
\newblock Ordered {B}ratteli diagrams, dimension groups and topological
  dynamics.
\newblock {\em Internat. J. Math.}, 3(6):827--864, 1992.

\bibitem{Li:Cartan}
Xin Li.
\newblock Constructing {C}artan subalgebras in classifiable stably finite
  {$\mathrm{C}^*$}-algebras.
\newblock {Preprint arXiv:1802.01190v2}, 2018.

\bibitem{LinWeiss:MTD}
Elon Lindenstrauss and Benjamin Weiss.
\newblock Mean topological dimension.
\newblock {\em Israel J. Math.}, 115:1--24, 2000.

\bibitem{MR3273581}
Hiroki Matui and Yasuhiko Sato.
\newblock Decomposition rank of {UHF}-absorbing {$\mathrm{C}^*$}-algebras.
\newblock {\em Duke Math. J.}, 163(14):2687--2708, 2014.

\bibitem{MR2320644}
N.~Christopher Phillips.
\newblock Cancellation and stable rank for direct limits of recursive
  subhomogeneous algebras.
\newblock {\em Trans. Amer. Math. Soc.}, 359(10):4625--4652, 2007.

\bibitem{Phi:LargeSubalgebras}
N.~Christopher Phillips.
\newblock Large subalgebras.
\newblock {Preprint arXiv:1408.5546}, 2014.

\bibitem{Putnam:MinHomCantor}
Ian~F. Putnam.
\newblock {The $C^*$-algebras associated with minimal homeomorphisms of the
  Cantor set}.
\newblock {\em Pacific J. Math.}, 136(2):329--353, 1989.

\bibitem{Put:Excision}
Ian~F. Putnam.
\newblock {An excision theorem for the {$K$}-theory of {$C^*$}-algebras}.
\newblock {\em {J. Operator Theory}}, {38}({1}):{151--171}, {1997}.

\bibitem{Put:K-theoryGroupoids}
Ian~F. Putnam.
\newblock On the {$K$}-theory of {$C^*$}-algebras of principal groupoids.
\newblock {\em {Rocky Mountain J. Math.}}, {28}({4}):{1483--1518}, {1998}.

\bibitem{MR3770169}
Ian~F. Putnam.
\newblock Some classifiable groupoid {$C^*$}-algebras with prescribed
  {$K$}-theory.
\newblock {\em Math. Ann.}, 370(3-4):1361--1387, 2018.

\bibitem{MR584266}
Jean Renault.
\newblock {\em A groupoid approach to {$C^{\ast} $}-algebras}, volume 793 of
  {\em Lecture Notes in Mathematics}.
\newblock Springer, Berlin, 1980.

\bibitem{Rordam2004}
Mikael R{\o}rdam.
\newblock The stable and the real rank of {$\mathcal{Z}$}-absorbing
  {$C^*$}-algebras.
\newblock {\em Internat. J. Math.}, 15(10):1065--1084, 2004.

\bibitem{MR3418247}
Yasuhiko Sato, Stuart White, and Wilhelm Winter.
\newblock Nuclear dimension and {$\mathcal{Z}$}-stability.
\newblock {\em Invent. Math.}, 202(2):893--921, 2015.

\bibitem{Suk:ConMinSys}
Yuhei Suzuki.
\newblock Construction of minimal skew products of amenable minimal dynamical
  systems.
\newblock {\em Groups Geom. Dyn.}, 11(1):75--94, 2017.

\bibitem{TWW}
Aaron Tikuisis, Stuart White, and Winter Wilhelm.
\newblock {Quasidiagonality of nuclear $\mathrm{C}^*$-algebras}.
\newblock {\em {Ann. of Math.}}, 185(1):229--284, 2017.

\bibitem{Tu:Groupoids}
Jean-Louis Tu.
\newblock {La conjecture de {B}aum-{C}onnes pour les feuilletages moyennables}.
\newblock {\em {$K$-Theory}}, {17}({3}):{215--264}, {1999}.

\bibitem{West:HilbertCubeMflds}
James~E. West.
\newblock Hilbert cube manifolds--meeting ground of geometric topology and
  absolute neighborhood retracts.
\newblock {\em Proceedings of the International Congress of Mathematics,
  Helsinki}, pages 497--503, 1978.

\bibitem{Wind:not_uniquely_ergo}
Alistair Windsor.
\newblock {Minimal but not uniquely ergodic diffeomorphisms}.
\newblock In {\em Smooth ergodic theory and its applications ({S}eattle, {WA},
  1999)}, volume~69 of {\em Proc. Sympos. Pure Math.}, pages 809--824. Amer.
  Math. Soc., Providence, RI, 2001.

\end{thebibliography}
\end{document}